\newcommand{\R}{\mathbf{R}}
\newcommand{\E}{\mathbf{E}}
\newcommand{\Recess}{{\mathcal R}}
\newcommand{\Future}{{\mathcal F}}
\newcommand{\Past}{{\mathcal P}}
\newcommand{\Normals}{{\mathcal N}}
\newcommand{\II}{\mathrm{I}\hspace{-0.8pt}\mathrm{I}}
\newcommand{\M}{\E^{n+1}_1}
\newcommand{\SEu}{\E^{n+k}_k}
\newcommand{\m}{\E^n_1}
\newcommand{\Mink}{\E^3_1}
\newcommand{\mink}{\E^2_1}
\newcommand{\Eu}{\E^{n+1}}
\newcommand{\x}{\mathbf{x}}
\newcommand{\w}{\mathbf{w}}
\theoremstyle{plain}
\newtheorem{thm}{Theorem}[section]
\newtheorem{cor}[thm]{Corollary}
\newtheorem{prop}[thm]{Proposition}
\newtheorem{lem}[thm]{Lemma}
\newtheorem{definition}[thm]{Definition}
\newtheorem{claim}{Claim}
\theoremstyle{definition}
\newtheorem{remark}[thm]{Remark}
\newtheorem{example}[thm]{Example}
\DeclareMathOperator{\constant}{constant}
\DeclareMathOperator{\cl}{cl}
\DeclareMathOperator{\hess}{\nabla^2\!}
\DeclareMathOperator{\del}{\nabla\!}
\DeclareMathOperator{\Sec}{Sec}
\DeclareMathOperator{\inte}{int}
\renewcommand{\tilde}{\widetilde}
\DeclareMathOperator{\range}{range}
\numberwithin{equation}{section}
\begin{document}

\title{Convex functions and geodesic connectedness of space-times}

\author{Stephanie B. Alexander}
\address{1409 W. Green St., Urbana, Illinois 61801}
\email{sba@illinois.edu}
\thanks{This work was partially supported by a grant from the Simons Foundation (\#209053 to Stephanie Alexander).}
\author{William A. Karr}
\address{1409 W. Green St., Urbana, Illinois 61801}
\email{wkarr2@illinois.edu}
\thanks{This material is partially based upon work supported by the National Science Foundation Graduate Research Fellowship to William Karr under Grant No. DGE 11-44245. Any opinion, findings, and conclusions or recommendations expressed in this material are those of the authors and do not necessarily reflect the views of the National Science Foundation.}

\maketitle 

\begin{abstract}
 This paper explores the relation between convex functions and the geometry of space-times and semi-Riemannian manifolds. Specifically, we study geodesic connectedness. We give geometric-topological proofs of geodesic connectedness for classes of space-times to which known methods do not apply. For instance: A null-disprisoning space-time is geodesically connected if it supports a proper, nonnegative strictly convex function whose critical set is a point. Timelike strictly convex hypersurfaces of Minkowski space are geodesically connected. We also give a criterion for the existence of a convex function on a semi-Riemannian manifold. We compare our work with previously known results.
\end{abstract}

\tableofcontents

\section{Introduction}\label{introduction}

This paper explores the relation between geometric convexity, and geodesic connectedness of 
space-times and semi-Riemannian manifolds. We consider geodesics of all causal types, since they form the scaffolding for the global topological and geometric structure of the space. 

According to Gibbons and Ishibashi \cite{gi}: ``Convexity and convex functions play an important role in theoretical physics. For example, Gibbs's approach to thermodynamics \cite{gibbs} is based on the idea that the free energy should be a convex function. A closely related concept is that of a convex cone which also has numerous applications to physics. 
Perhaps the most familiar example is the light cone of Minkowski space-time. Equally important is the convex cone of mixed states of density matrices in quantum mechanics. 
Convexity and convex functions also have important applications to geometry, including Riemannian geometry \cite{udriste}. It is surprising therefore that, to our knowledge, 
that [sic] techniques making use of convexity and convex functions have played no great role in General Relativity.'' 

Sufficient conditions for geodesic 
connectedness of Lorentzian manifolds are given by an early theorem of Uhlenbeck \cite[Theorem 5.3]{uhlenbeck}, and by \cite[Theorem 11.25]{bee}. However, these theorems
concern spaces with no conjugate points, whereas the spaces we consider may have conjugate points along geodesics of all causal types.

Geodesic connectedness was studied via an infinite-dimensional variational theory introduced by Benci, Fortunato, Giannoni and Masiello at the end of the 1980s. 
For Lorentzian manifolds carrying a timelike 
or null 
Killing field, geodesic connectedness has only recently become well understood \cite{cfs,bcf}. 
It is also known to hold for globally hyperbolic space-times carrying time-dependent orthogonal splittings satisfying certain conditions, 
as summarized in Theorem \ref{thm:cs-main} of 
in the appendix. See the 
%S
%informative 
survey of geodesics in semi-Riemannian manifolds by Candela and Sanchez \cite{cs:survey}, 
%S
and  the book \cite{masiello:book} and review article \cite{masiello:survey} by Masiello. 

Globally hyperbolic manifolds always have orthogonal splittings \cite{bs}, but there may be none satisfying the conditions just mentioned, e.g. de Sitter space, which is not geodesically connected. Or there might exist splittings that satisfy the conditions, but no known way to determine their existence. 

According to \cite{cs:survey}, 
``it should be interesting to obtain a result similar to that one also under weaker assumptions on the metric or under intrinsic hypotheses more related to the geometry of the manifold.'' 

Uhlenbeck considers orthogonal splittings satisfying a metric growth condition, and also calls for a more geometric approach, observing that the growth condition is ``not very satisfactory since it depends on the splitting [which] may be changed in drastically different ways ... it is to be hoped that a similar condition that does not depend on coordinates may be found'' \cite[p.\,75]{uhlenbeck}.

Using convex functions, we give geometric/topological proofs of geodesic connectedness for classes of space-times to which known methods do not apply. Our theorems concern space-times that are strongly causal or, more generally, null-disprisoning 
(see Definition \ref{def:null-dispris}); 
or else timelike convex hypersurfaces  
%
%which while
%globally hyperbolic, 
%typically do not have natural orthogonal splittings that satisfy 
%all the conditions in \cite{cs:survey} 
%(see Appendix \ref{app:conclusion}), nor do we know how to determine if any splitting that satisfies the conditions exists.
(see Appendix \ref{app:conclusion}).

We remark that convexity properties of timelike hypersurfaces were used by Chru\'{s}ciel and Galloway for geometric arguments concerning the mass of asymptotically Schwarzschildian spacetimes \cite{ch-g}.
%S
Masiello  has studied the relation of geodesic connectedness to convex domains in Lorentzian manifolds (see Appendix \ref{app:conclusion}). In \cite{gmp}, Giannoni, Masiello and Piccione used convex functions on Riemannian manifolds to study the number of light rays in the framework of the gravitational lensing effect.

Convex hypersurfaces of $\Eu$ are Riemannian 
manifolds of sectional curvature $\Sec\ge 0$, and their properties reflect those of general Riemannian manifolds of $\Sec\ge 0$. Timelike convex hypersurfaces of $\M$ satisfy $\mathcal{R}\ge 0$. This condition, introduced and applied by Andersson and Howard \cite{ah}, extends $\Sec\ge 0$ from the Riemannian to the semi-Riemannian setting by requiring spacelike sectional curvatures to be $\ge 0$ and timelike ones to be $\le 0$ (similarly for $\mathcal{R}\ge \kappa\,$ and $\mathcal{R}\le \kappa$). Thus our motivation for studying timelike convex hypersurfaces is two-fold: They are space-times to which topological/geometric arguments readily apply, and in particular they carry convex functions. And as in the Riemannian case, they should 
be a guide to
properties of more general space-times of $\mathcal{R}\ge 0\,$ (for some properties of $\mathcal{R}\ge 0\,$, see Remark \ref{rem:R>K} below).

\section{Results}\label{results}

We take \cite{bee}, \cite{oneill}, \cite{he} as standard references on Lorentzian and semi-Riemannian geometry.

\begin{definition}
By a \emph{convex (strictly convex) function} on a 
semi-Riemannian manifold, we mean a smooth real-valued function whose restriction to every geodesic has nonnegative (positive) second derivative. Equivalently, $f$ is convex (strictly convex) if and only if the Hessian \,$\hess f$ is positive semidefinite (positive definite). 
\end{definition}

\begin{remark}
This paper demonstrates the importance of these classically convex functions (equivalently, taking the negative, concave functions) in studying 
certain space-times that satisfy 
the curvature condition $\mathcal{R}\ge 0\,$. On Riemannian spaces with 
sectional curvature $\ge 0$, such functions arise naturally (Cheeger-Gromoll\,\cite{c-g},  
also
see \cite {petersen}).

In \cite{gi}, Gibbons and Ishibashi introduce and consider ``space-time convex" functions on Lorentzian manifolds, namely those satisfying 
\begin{equation}\label{eq:space-time-convex}
\hess f(\mathbf x,\mathbf x) \ge c\,g(\mathbf x,\mathbf x), 
\end{equation} for any tangent vector $\mathbf x$, 
where $c>0$, $g$ is the Lorentzian metric, and $\hess f$ has Lorentzian signature.  
They discuss consequences of the existence of such functions, for instance, ruling out closed marginally inner and outer trapped surfaces. 
They give examples of space-time convex functions on cosmological space-times, anti-de-Sitter space and black-hole space-times, and consider level sets of convex functions, as well as foliations by constant mean curvature hypersurfaces.

On Riemannian spaces with sectional curvature $\le 0$, convex functions again arise naturally (Bishop-O'Neill\,\cite{b-o}, see examples in \cite[Ch.4]{udriste}).
In future work, we shall explore a close relationship between upper bounds on $\mathcal {R}$ and space-time convex functions.
\end{remark}

In an early and influential consideration of geodesic connectedness of Riemannian manifolds, Gordon proved that if a connected Riemannian manifold $M$ supports a proper, nonnegative convex function, then $M$ is geodesically connected \cite{gordon}. Gordon's proof depends on the fact that complete Riemannian manifolds are geodesically connected, and does not extend to the Lorentz setting where geodesic connectedness is not a consequence of any completeness hypothesis.

We prove the following semi-Riemannian version of Gordon's theorem.

\begin{definition}\label{def:null-dispris}
A semi-Riemannian manifold $M$ is called \emph{disprisoning} if for every inextendible geodesic $\gamma:(a,b)\to M$, neither end lies in a compact set. $M$ is called \emph{null-disprisoning} if for every inextendible null geodesic, neither end lies in a compact set.
\end{definition}

Note that strongly causal, in particular globally hyperbolic, space-times are null-disprisoning \cite[Proposition 3.13]{bee}.

\begin{thm}\label{thm:gordon-semi}
Let $M$ be a null-disprisoning semi-Riemannian manifold. Suppose $M$ supports a proper, nonnegative convex function $f:M\to\R$ whose critical set is a 
minimum point. If there is no non-constant complete geodesic on which $f$ is constant (for example, if $f$ is strictly convex), then $M$ is geodesically connected.
\end{thm}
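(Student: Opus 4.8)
The plan is to fix the minimum point $p$ (the unique critical point of $f$), fix an arbitrary target point $q \in M$, and produce a geodesic from $p$ to $q$; since geodesic connectedness to a single point propagates to all pairs via a limiting/reparametrization argument only in the Riemannian case, I will in fact need to connect an arbitrary pair $x,y$, so let me instead set things up as a variational search over geodesics with prescribed endpoints. First I would consider, for each unit-speed-normalized ``direction'' in the sense of the exponential map, the geodesic $\gamma_v(t)=\exp_x(tv)$ and ask for which $v$ we have $\gamma_v(1)=y$. The key idea is to use $f$ as a barrier: because $f$ is convex, $t \mapsto f(\gamma_v(t))$ is a convex function of $t$, hence on any interval it is bounded above by its endpoint values; and because $f$ is proper and nonnegative, sublevel sets $\{f \le c\}$ are compact. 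So any geodesic segment with both endpoints in $\{f \le c\}$ stays inside the compact set $\{f \le c\}$.

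The heart of the argument is a continuity/degree or connectedness argument on the space of geodesics. I would look at the set $\Omega$ of inextendible geodesics through $x$, parametrized by the unit tangent directions at $x$ (topologically a sphere bundle's fiber, or rather the projectivized/normalized tangent space), together with an ``arrival parameter''. For a geodesic $\gamma$ starting at $x$, I want to track whether it reaches $\{f \le f(y)\}$ and in particular whether it can be made to hit $y$. The null-disprisoning hypothesis enters precisely here: a null geodesic cannot be imprisoned in a compact set, so a null geodesic starting at $x$ either stays in a region where $f$ is bounded (impossible for an inextendible null geodesic trapped in a compact sublevel set unless... ) — more carefully, if $f$ were constant on an incomplete-looking null geodesic the disprisoning forces completeness, and then the hypothesis ``no non-constant complete geodesic on which $f$ is constant'' kicks in, forcing $f$ to be strictly decreasing-then-increasing (genuinely convex) along it, so the null geodesic escapes every sublevel set and hence escapes to the ``boundary at infinity'' of $M$. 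This provides the needed control at the topological boundary of the parameter space: geodesic directions that ``miss'' $y$ by going off to infinity are detected, and the complementary directions form an open set on which a connectedness argument yields a direction hitting $y$.

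Concretely, the steps I would carry out in order: (1) Record the convexity consequence: along any geodesic, $f$ is convex, so $f$ restricted to a segment is $\le \max$ of endpoint values, and a bi-infinite geodesic on which $f$ is bounded is one on which $f$ is constant. (2) Combine with properness: a complete geodesic on which $f$ is bounded lies in a compact sublevel set; by hypothesis such a geodesic would be non-constant only if $f$ is non-constant on it, contradiction, so $f$ is constant on it — but then null-disprisoning forbids a null one of these (it would be imprisoned), and the ``no constant complete geodesic'' hypothesis forbids any non-constant one, leaving only the constant geodesic at $p$. (3) Deduce that every inextendible non-constant geodesic through $x$ is ``$f$-unbounded'': $f \to \infty$ along at least one end (in fact, since $f\ge 0$ and $f$ convex on the geodesic, along both ends unless the geodesic is incomplete, in which case disprisoning says the incomplete end still leaves every compact set, hence $f\to\infty$ there too). (4) Set up the endpoint map: on the space $S_xM$ of initial directions, consider $E\colon S_xM \times [0,\infty) \to M$, $E(v,t) = \exp_x(tv)$ (defined on the maximal domain), and show that the set of $(v,t)$ with $E(v,t)=y$ is nonempty by a degree/linking argument — using that large-$t$ behavior escapes any compact set containing $y$, so the ``winding'' of $E$ around $y$ is detected by a boundary computation. (5) Finally, pass from connecting the distinguished situation to connecting arbitrary $x,y$: observe the argument above used only that $x$ is arbitrary and $f$ proper/nonnegative/convex with the completeness-exclusion hypothesis, so it already gives an arbitrary pair.

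I expect the main obstacle to be step (4): making the topological argument rigorous without any completeness of $\exp_x$. The exponential map may not be defined on all of $T_xM$, its image may be complicated, and geodesics may have conjugate points (the paper emphasizes this), so one cannot simply invoke that $\exp_x$ is a diffeomorphism or a proper map. The right tool is likely a \emph{relative} degree or a continuity argument on the space of geodesic segments with one endpoint at $x$: show that the subset of directions whose geodesic reaches the compact sublevel set $K = \{f \le f(y)\}$ is both open and closed in an appropriate subspace (closedness uses that, by convexity, once a geodesic enters $K$ it can be continued to re-exit, and the re-entry/exit points vary continuously; openness uses the implicit function theorem away from a ``bad'' set and a perturbation argument through it), and then that within this subset the evaluation map onto $K$ is surjective — again a degree argument, but now on the compact manifold-with-boundary $K$, where properness is automatic. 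Handling conjugate points in the openness/surjectivity step — where $d\exp_x$ degenerates — will require either a Sard-type genericity reduction or a direct homotopy argument, and this is where the real work lies.
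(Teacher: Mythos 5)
Your overall strategy is the right one, but two essential steps are missing or wrong. First, the passage from null-disprisoning to control of all geodesics. In your steps (2)--(3) you handle complete geodesics correctly (a convex bounded function on $\R$ is constant, so the exclusion hypothesis applies), but for an \emph{incomplete} imprisoned geodesic you write that ``disprisoning says the incomplete end still leaves every compact set'' --- disprisoning is not a hypothesis; only \emph{null}-disprisoning is, and the imprisoned geodesic in question need not be null. The paper closes this gap (Lemma \ref{lem:exit-or-level}) by taking a sequence of normalized velocities along the imprisoned maximal geodesic $\alpha:[0,b)\to M_a$, extracting a limit $(q,\mathbf v)$ in the unit tangent bundle, showing $\mathbf v$ cannot be null (else continuous dependence on initial conditions would force the tail of $\alpha$ to escape $M_a$), deducing that $b<\infty$ would let $\alpha$ extend to $b$ (contradicting maximality), and finally producing from the limit a complete non-constant geodesic on which $f$ is constant --- contradiction. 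This is the mechanism by which the manifold is shown to be disprisoning, and without it your step (3) does not go through.

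Second, your step (4) --- the topological heart --- is not carried out, and the setup you propose (the endpoint map $E(v,t)=\exp_x(tv)$ with target $y$, plus a ``winding'' computation) is not the one that works; the difficulties you anticipate with conjugate points and non-properness of $\exp_x$ are real for that map. The idea you are missing is to replace it by the \emph{first-exit map} $\psi_{(p,a)}:S_pM\to L_a$ sending a direction to the first point where the corresponding geodesic leaves the compact sublevel set $M_a$. Convexity of $f\circ\gamma$ forces every such exit to be transverse to $L_a$ (Lemma \ref{lem:transverse}), which makes $\psi_{(p,a)}$ continuous and continuously varying in $p$ and $a$; this is a map between compact oriented $(n-1)$-manifolds (after proving $L_a$ is connected and orientable), so its degree is defined, is $1$ for $a$ small (where $\psi$ is a diffeomorphism onto a small level set around the minimum point), and is constant in $p$ and $a$ by homotopy invariance --- conjugate points are irrelevant to this, since only continuity and transversality are used, not local invertibility. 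Nonzero degree gives surjectivity, hence a geodesic from any $p\in M_a-L_a$ to any point of $L_a$; the remaining case of two points on the \emph{same} level set (which your outline omits entirely) is handled by a limit of geodesics from interior points $p_i\to p$, where disprisonment rules out the limit geodesic being trapped in $M_a$ forever. Until the first-exit construction and the same-level case are supplied, the proposal is an outline of intent rather than a proof.
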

 
\begin{remark}\label{rem:R>K} 
In Riemannian comparison theory, the existence of proper nonnegative convex functions plays a fundamental role. A complete Riemannian manifold of nonnegative sectional curvature always carries such a function, obtained by taking the negative of the infimum of all Busemann functions of rays based at a point. The Soul Theorem of Meyer-Cheeger-Gromoll is a consequence (see \cite[\S 11.4]{petersen}). 

We have already mentioned that timelike convex hypersurfaces $M$ of $\M$ satisfy $\mathcal{R}\ge 0$ (namely, timelike sectional curvatures $\le 0$ and spacelike ones $\ge 0$); see Proposition \ref{prop:hyp-R>0}. Moreover, they support proper convex functions (Theorem \ref{thm:support-convex}). We expect timelike convex hypersurfaces $M$ to indicate properties of more general space-times of $\mathcal{R}\ge 0$. Thus we come to the question: do space-times with $\mathcal{R}$ bounds have a rich structure analogous to Riemannian comparison theory? We mention some affirmative indicators:
\begin{enumerate}
\item
Andersson and Howard proved ``gap'' rigidity theorems for $\mathcal{R}\ge 0$ and $\mathcal{R}\le 0$ of the type first proved for Riemannian manifolds of $\Sec\ge 0$ and $\Sec\le 0$ by Greene and Wu \cite{gw} and Gromov \cite{bgs} respectively \cite{ah}.
\item
Using the Penrose trapped surface theorem, Mukuno recently proved an analogue of Myers' Theorem  for null-geodesically complete Lorentzian manifolds $M$ with metric of the form $-dt^2+g_{\text{\,Riem}}(t) $ and compact second factor. Namely, if $M$ satisfies $\mathcal{R}\ge \kappa$ for $\kappa>0$, then $M$ has finite fundamental group \cite{mukuno}.  
(See also \cite{mukuno1}.)
\item
In Riemannian manifolds, sectional curvature bounds are characterized by local distance comparisons. In \cite{ab-lorentz}, an analogous theorem is shown to hold in semi-Riemannian manifolds having an $\mathcal{R}$ bound.
\end{enumerate}
\end{remark}

\begin{definition}
The semi-Euclidean space $\SEu$ is the real vector space of dimension $n+k$ carrying a nondegenerate inner product whose diagonal form consists of $n$ positives followed by $k$ negatives.
\end{definition}
 
\begin{definition}\label{def:convex-hyp}
A \emph{convex body} in $\SEu$ is a 
closed 
convex set (not assumed compact) with nonempty interior. A \emph{convex hypersurface} of $\SEu$ is a connected smooth manifold that is smoothly embedded as the boundary of a convex body. 
\end{definition} 

We prove the following geodesic connectedness theorem for a timelike convex hypersurface $M$ of Minkowski space $\M$. By a 
\emph{rolled 
Euclidean half-plane in $M$}, we mean the image of 
an isometrically immersed Euclidean half-plane $\{(x,y):x\in\R,y\ge 0\}$,
where the images of 
half-lines $x=\constant$ are parallel half-lines in $\M$ and the image of the boundary $y=0$ lies in a compact set.

\begin{thm}\label{thm:main}
Let $M$ be a timelike convex hypersurface of $\M$. 
Suppose that after splitting off a semi-Euclidean factor of maximal dimension, 
$M$ contains no 
rolled
Euclidean half-plane.
Then $M$ is geodesically connected.

In particular, any timelike strictly convex hypersurface is geodesically connected. 
\end{thm}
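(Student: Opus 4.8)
The plan is to derive Theorem~\ref{thm:main} from Theorem~\ref{thm:gordon-semi}, using Theorem~\ref{thm:support-convex} and Proposition~\ref{prop:hyp-R>0}. Write $M=\partial C$ with $C\subset\M$ a convex body. First I would dispose of the splitting: to split off a semi-Euclidean factor of maximal dimension is to write $C=C_{0}\times_{\perp}W$, with $W$ a nondegenerate linear subspace (maximal such) contained in the largest subspace $L$ with $C+L=C$; correspondingly $M=M_{0}\times W$ as a semi-Riemannian product, with $M_{0}=\partial C_{0}$ and $W$ flat. Geodesics of a metric product are pairs of geodesics with a common affine parameter, and $W$ is geodesically connected by geodesics defined on all of $\R$; hence $M$ is geodesically connected if and only if $M_{0}$ is. If the induced metric on $M_{0}$ is Riemannian (this happens precisely when $W$ contains a timelike vector), then $M_{0}$ is a convex hypersurface of a Euclidean space, so a complete Riemannian manifold with $\Sec\ge0$, hence geodesically connected; so we may assume $M_{0}$ is timelike. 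Replacing $M$ by $M_{0}$, we assume from now on that $C$ has no semi-Euclidean factor and that $M$ contains no rolled Euclidean half-plane.

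Next I would check that $M$ is null-disprisoning, as required by Theorem~\ref{thm:gordon-semi}. A tangent vector that is causal for the induced metric of the timelike hypersurface $M$ is causal in $\M$; fixing compatible time orientations, every causal curve of $M$ is a causal curve of Minkowski space $\M$, hence, parametrized by a linear time coordinate $t$, has $t$ monotone and Euclidean spatial speed at most one, so its spatial displacement is bounded by its $t$-displacement. Therefore no causal curve re-enters a sufficiently small ``coordinate box'' about a point, so $M$ is strongly causal, hence null-disprisoning by \cite[Proposition~3.13]{bee}.

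The heart of the matter is to produce a convex function meeting the remaining hypotheses of Theorem~\ref{thm:gordon-semi}. Theorem~\ref{thm:support-convex} supplies a proper, nonnegative convex function $f$ on $M$ (for instance the restriction to $M$ of a convex function $g$ on $\M$ adapted to $C$, so that the Gauss equation writes the Hessian of $f$ on $M$ as the ambient Hessian of $g$ along $TM$ plus a term $\langle\grad g,\II\rangle$ whose sign convexity of $C$ controls). Since $f$ is proper its level sets are compact, so if $f$ is constant on a complete geodesic $\gamma$ then $\gamma$ has relatively compact image. I would then establish two further properties of $f$ (adjusting $g$ within the allowed freedom if necessary): (i) the critical set of $f$ is a single minimum point; and (ii) $f$ is non-constant on every non-constant complete geodesic. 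For (i): a point of $M$ is critical for $f=g|_{M}$ exactly where $\grad g$ is normal to $M$, and convexity of $C$ together with the absence of a semi-Euclidean factor collapses this locus to one point. For (ii): if $f$ were constant on a complete geodesic $\gamma\colon\R\to M$ then $(\hess f)(\gamma',\gamma')\equiv0$ along $\gamma$, and $\gamma$ lies in a compact level set; combining this with $\mathcal R\ge0$ on $M$ (Proposition~\ref{prop:hyp-R>0}) and the flatness of the ambient affine structure, one extracts a flat Euclidean half-plane in $M$ whose rulings are half-lines of $\M$ parallel to a fixed spacelike direction and whose edge is the compact geodesic $\gamma$ --- a rolled Euclidean half-plane, contrary to our reduction. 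Granting (i) and (ii), Theorem~\ref{thm:gordon-semi} applies and $M$ is geodesically connected.

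Finally, the ``in particular'' clause is immediate: a timelike strictly convex hypersurface contains no affine line of $\M$, since such a line would be a geodesic of $M$ with vanishing second fundamental form in its direction, impossible when the second fundamental form is definite; hence it has no rolled Euclidean half-plane and no semi-Euclidean factor, so the hypothesis holds automatically (and indeed $f=g|_{M}$ is then strictly convex, giving (ii) at once). I expect the main obstacle to be step three --- and within it property (ii): producing, from an $f$-constant complete geodesic, an honest rolled Euclidean half-plane, i.e.\ showing that its rulings are $\M$-affinely parallel and its edge compact. Pinning down the critical set should be comparatively routine, and the reductions of the first two paragraphs are essentially bookkeeping on top of Theorems~\ref{thm:gordon-semi} and~\ref{thm:support-convex}.
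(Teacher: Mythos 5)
Your overall strategy (split off the flat factor, establish null-disprisonment via strong causality, build a proper convex height function, and rule out $f$-constant complete geodesics via the no-rolled-half-plane hypothesis) matches the paper's Case~1, but there are two genuine gaps. First, your reduction does not eliminate the case in which $M$ is ruled by parallel \emph{null} lines. A null direction is degenerate, so it cannot be split off as a semi-Euclidean factor, and such an $M$ need not contain a rolled Euclidean half-plane (the strip over a null ruling carries a degenerate, not Euclidean, metric). For example, $M=\{(p+t\mathbf v,t):p\in C,\ t\in\R\}$ with $C$ a strictly convex paraboloid in $\E^n$ graphed over the hyperplane orthogonal to $\mathbf v$ satisfies all hypotheses of the theorem, has no semi-Euclidean factor, yet contains lines; by Lemma~\ref{lem:weak-strong} it has no point of strict convexity, so Theorem~\ref{thm:wu2} and Theorem~\ref{thm:support-convex} do not apply and your convex function never gets off the ground. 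The paper must treat this case by an entirely different method: it identifies the null rulings with the flow of a complete null Killing field, shows $M$ is globally hyperbolic, and invokes the variational criterion of Bartolo--Candela--Flores (Theorem~\ref{thm:null-killing}), then verifies the connecting-curve condition $(\diamond)$ using convexity of the slice $C$. Some substitute for this case is indispensable.

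Second, your claim (i) --- that absence of a semi-Euclidean factor forces the critical set of $f=\langle\cdot,\mathbf v_0\rangle|_M$ to be a single point --- is false. The critical set is $B\cap\{x^1=0\}$, a compact convex set which can have positive dimension (e.g.\ when $M$ has a flat compact spot in a timelike support hyperplane), and the paper explicitly remarks after Theorem~\ref{thm:wu2} that coordinates cannot always be chosen to make it a point. Consequently Theorem~\ref{thm:gordon-semi} is not directly applicable. The paper instead proves and uses the more flexible Lemma~\ref{lem:semi-geo-conn}, whose hypotheses are that the critical set is geodesically connected with an oriented neighborhood and that the exit map $\psi_{(p,a)}\colon S_pM\to L_a$ has nonzero degree for some small $a$; the degree-one verification (comparing geodesics of $M$ issuing from the compact convex critical set with straight lines in the tangent hyperplane) is a nontrivial step your outline omits. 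Your sketch of (ii) --- extracting a rolled half-plane from an $f$-constant complete geodesic --- is in the right spirit (the paper first reduces to spacelike, non-straight $\gamma$, then uses Theorem~\ref{thm:wu2}(2) to erect vertical half-lines over the locus where $\gamma''\ne 0$ and convexity to fill in planar strips over the straight arcs), but as written it leans on ``$\mathcal R\ge 0$ plus flatness'' where the actual mechanism is the boundary structure of the projection $\Pi(M)$.
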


\begin{remark}
The no rolled Euclidean half-plane condition is  technical and we would like to eliminate it. 
We use it here to prove that $M$ is disprisoning.
 \end{remark}

\begin{example}\label{ex:race-track}
An example of a timelike strictly convex surface is examined and illustrated in the appendix. 
Now we give an example of a timelike convex hypersurface $M$ of $\M$ 
such that $M$ contains a 
rolled
Euclidean half-plane if $n>2$. 

Let $C^{n-1}$ be a smoothly capped cylinder embedded as a convex hypersurface of the copy of Euclidean space $ \{x^{n+1}=0 \}$ in $\M$, $n\ge 2$. Let the cylindrical part of $C^{n-1}$ be given by
 \begin{equation}\label{eq:cylinder}
 x^1\ge 1,\ \ 
 |x^2|^2 + \cdots + |x^n|^2 = 1, \ \ x^{n+1}=0,
 \end{equation}
and let the cap lie in the region $0\le x^1\le 1$. Set
\[
M = \{ p + (\sqrt{1+t^2}, 0 ,\dots, 0, t): t\in\R, p\in C^{n-1}\}. 
\]
The claimed properties of $M$ are easily verified.
  \end{example}

The following corollary of Theorem \ref{thm:gordon-semi} generates yet more geodesically connected space-times:

\begin{definition} 
A smooth function $f:M\to\R$ on a Lorentzian manifold $M$ will be called \emph{Lorentzian} if the graph of $f$ in $M\times\R$ is a timelike submanifold, i.e. $\langle\del f,\del f \rangle> -1$.
\end{definition}

\begin{cor}\label{cor:lorentz-graph}
Let $M$ be a connected, strongly causal space-time, and $f:M\to\R$ be a proper, nonnegative strictly convex Lorentzian function.
Then the graph of $f$ in $M\times\R$ is geodesically connected.
\end{cor}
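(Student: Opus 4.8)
The plan is to deduce this from Theorem~\ref{thm:gordon-semi} applied to the graph. Equip $M\times\R$ with the product metric $g\oplus dt^2$. First I would note that $\graph(f)$, with the metric it inherits from $M\times\R$, is isometric via $\pi_M$ to $(M,\hat g)$, where $\hat g:=g+df\otimes df$: the map $q\mapsto(q,f(q))$ has differential $X\mapsto(X,df(X))$, and $g\oplus dt^2$ evaluated on $(X,df(X))$ and $(Y,df(Y))$ is $g(X,Y)+df(X)\,df(Y)$. The Lorentzian hypothesis $\langle\del f,\del f\rangle>-1$ is exactly what makes this rank-one perturbation of $g$ nondegenerate and of Lorentzian signature, and under the isometry the height function $\pi_\R|_{\graph(f)}$ corresponds to $f$ itself. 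So it is enough to verify that $(M,\hat g)$ is null-disprisoning and that $f$ is a proper, nonnegative, strictly $\hat g$-convex function whose critical set is a minimum point.

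For null-disprisoning, the point is that $\hat g(X,X)=g(X,X)+\langle\del f,X\rangle^2\ge g(X,X)$, so every $\hat g$-causal vector is $g$-causal (and every $\hat g$-timelike vector is $g$-timelike, so the time orientation transfers). Hence every $\hat g$-causal curve is a $g$-causal curve, so any neighborhood that is causally convex for $g$ is causally convex for $\hat g$; strong causality of $M$ therefore passes to $(M,\hat g)$, which is then null-disprisoning by \cite[Proposition~3.13]{bee}.

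For the convexity, let $\hat\nabla$ denote the Levi-Civita connection of $\hat g$. A direct computation --- most transparently by viewing geodesics of $(M,\hat g)$, through the isometry, as geodesics of the hypersurface $\graph(f)\subset M\times\R$, whose normal is proportional to $(-\del f,1)$ --- gives $\hat\nabla_X Y=\del_X Y+\frac{\hess f(X,Y)}{1+\langle\del f,\del f\rangle}\,\del f$, and therefore
\[
(\hess_{\hat g} f)(X,X)=\hess f(X,X)-\frac{\hess f(X,X)}{1+\langle\del f,\del f\rangle}\,\langle\del f,\del f\rangle=\frac{\hess f(X,X)}{1+\langle\del f,\del f\rangle}.
\]
The denominator is positive by the Lorentzian hypothesis and the numerator is positive for $X\ne 0$ by strict convexity of $f$, so $f$ is strictly $\hat g$-convex. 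Properness and nonnegativity are unchanged by the metric change, so $f$ attains its minimum at some $p_0$ with $df_{p_0}=0$; and since every critical point of $f$ is a nondegenerate local minimum (its $\hat g$-Hessian being positive definite), $p_0$ is the only critical point --- for instance by following the negative gradient flow of $f$ for an auxiliary Riemannian metric, whose basins of attraction are open, mutually disjoint, and cover the connected manifold $M$. All hypotheses of Theorem~\ref{thm:gordon-semi} now hold for $(M,\hat g)\cong\graph(f)$, which is therefore geodesically connected.

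I do not expect a deep obstacle: the substance is entirely in the reduction to Theorem~\ref{thm:gordon-semi}, and the step to get right is the displayed Hessian identity, where $\langle\del f,\del f\rangle>-1$ is precisely what prevents the sign of $\hess f$ from flipping under $g\mapsto\hat g$; the rest --- including the (easy but essential) remark that this change of metric only strengthens causality --- is routine. One could alternatively run the whole argument on $\graph(f)$ directly inside $M\times\R$, but the $(M,\hat g)$ reformulation makes each step transparent.
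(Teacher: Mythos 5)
Your proof is correct and follows essentially the same route as the paper: the paper also reduces to Theorem~\ref{thm:gordon-semi} by showing the graph inherits strong causality (hence null-disprisonment) from $M\times\R$ and by computing that the Hessian of the lifted function is $\hess f/(1+\langle\del f,\del f\rangle)$ times the original. Your reformulation via the pullback metric $\hat g=g+df\otimes df$ on $M$ is just a change of viewpoint on the same two lemmas (Lemmas~\ref{lem:convex-lift} and~\ref{lem:strong-causal-subman}), and your identity agrees with equation~(\ref{lem:hess-of-lift}).
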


Geodesic connectedness can also be diagnosed using not-necessarily-convex functions. Specifically, 
in Theorem \ref{thm:convex-function1} we give a criterion for the levels of a function on a semi-Riemannian manifold to be the levels of a convex function. 

The criterion is related to Fenchel's criterion for deciding if a function defined on an affine space and having convex level sets can be reparametrized as a convex function \cite{fenchel}. 
Theorems \ref{thm:convex-function1} and \ref{thm:gordon-semi} allow us to extend our class of geodesically connected spaces.

Here is a special case of these theorems.
The negativity of the expression \,$\mu$\, measures how badly the function \,$u$\, fails to be convex. 

\begin{thm}\label{thm:strict-criterion}
Suppose $u :M\to\R$ is a proper 
smooth nonnegative function on a connected semi-Riemannian manifold $M$, where the critical set of $u$ is a minimum point $p_0$, say $u(p _0)=0$. 
For $a\in\range u - \{0\}$, suppose the level sets 
$L_a$ are infinitesimally strictly convex, i.e. $\hess u\,(\mathbf x, \mathbf x) > 0$ if $\mathbf x\in T_pL_a$.

Let $N$ be a vector field on $M-\{p_0\}$ satisfying $Nu>0$. 
For $a\in\range u - \{0\}$, set
\begin{align}
\notag
\mu\,(a) =\inf\,
\bigl\{\bigl[\hess u(\mathbf x, \mathbf x)\hess u(N_p,N_p)- \bigl(\hess u(\mathbf x,N_p)\bigr)^2\bigr]/ &(N_pu)^2\hess u( \mathbf x, \mathbf x):\\&
p \in L_a, \mathbf x\in T_pL_a
\notag
\bigr\}.
\end{align}

If $\mu$ is bounded below by a continuous function 
\,$h:\range u - \{0\}\to\R$ that extends continuously to $0$, 
then: 
\begin{enumerate}
\item\label{convex-existence}
There is a smooth function $\rho:[0,\infty)\to [0,\infty)$ such that \,$\rho' \geq 1$\, and \,$f=\rho \circ u$\, is a proper strictly convex function.
\item
If $M$ is null-disprisoning, then
$M$ is geodesically connected.
\item If $M$ is a strongly causal space-time and $u$ is Lorentzian, then the graph in $M\times\R$ of $u$ is geodesically connected.
\end{enumerate}
\end{thm}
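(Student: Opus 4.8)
The plan is to reduce the whole statement to part (1): once $f=\rho\circ u$ is produced with the stated properties, parts (2) and (3) are applications of Theorem \ref{thm:gordon-semi} and Corollary \ref{cor:lorentz-graph}. So I would first set up the Hessian computation. Writing $f=\rho\circ u$ for a smooth $\rho:[0,\infty)\to[0,\infty)$ to be chosen, one has $\hess f=\rho'(u)\,\hess u+\rho''(u)\,du\otimes du$. Fix $p\neq p_0$, set $a=u(p)$, and decompose an arbitrary tangent vector as $\mathbf x=\mathbf v+tN_p$ with $\mathbf v\in T_pL_a$ (possible since $N_pu\neq 0$). Using $du(\mathbf v)=0$, the number $\hess f(\mathbf x,\mathbf x)$ becomes a quadratic in $t$ with leading coefficient $\rho'\hess u(N_p,N_p)+\rho''(N_pu)^2$ and with discriminant that, after dividing out the factor $\rho'>0$, is a negative multiple of precisely the bracketed expression in the definition of $\mu$. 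The conclusion of this step: $\hess f(\mathbf x,\mathbf x)>0$ for every nonzero $\mathbf x\in T_pM$ and every $p\neq p_0$ exactly when
\[
\frac{\rho''(a)}{\rho'(a)}>-\mu(a)\qquad\text{for all }a\in\range u-\{0\},
\]
and this single scalar inequality also forces the leading coefficient above to be positive, which disposes of the degenerate case $\mathbf v=0$.

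Second, I would solve this differential inequality. Since $\mu\geq h$ with $h$ continuous on $\range u-\{0\}$ and extending continuously to $0$, the function $-h$ is continuous on $[0,\infty)$ and bounded on compacta, so I can pick a smooth $g:[0,\infty)\to\R$ with $g\geq 1$ and $g>-h$ everywhere, and set $\rho'(a)=\exp\!\bigl(\int_0^a g(s)\,ds\bigr)$, $\rho(a)=\int_0^a\rho'(s)\,ds$. Then $\rho'\geq 1$ — so $\rho$ is a proper nonnegative homeomorphism of $[0,\infty)$, whence $f=\rho\circ u$ is proper and nonnegative — and $\rho''/\rho'=g>-h\geq-\mu$, so by the first step $\hess f$ is positive definite on $M-\{p_0\}$. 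The continuity of $h$ at $0$ is exactly what makes $g$, hence $\rho$, hence $f$, smooth at $p_0$. At $p_0$ one has $du=0$, so $\hess f(p_0)=\rho'(0)\hess u(p_0)$ is positive semidefinite because $p_0$ is a minimum of $u$ (positive definite if the minimum is nondegenerate); in all cases $f$ is convex on $M$. Moreover $f$ admits no non-constant complete geodesic $\gamma$ on which it is constant: $f\circ\gamma\equiv c$ forces $u\circ\gamma\equiv\rho^{-1}(c)=a$; if $a=0$ then $\gamma\equiv p_0$, and if $a>0$ then $\gamma$ lies in $L_a$ and $0=(u\circ\gamma)''=\hess u(\gamma',\gamma')>0$ unless $\gamma'\equiv 0$, by infinitesimal strict convexity of $L_a$. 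This establishes (1).

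For part (2): $f$ is proper, nonnegative, convex, its critical set is the single minimum point $p_0$ (since $df=\rho'(u)\,du$ and $\rho'\geq 1$), and it carries no non-constant complete geodesic on which it is constant, so Theorem \ref{thm:gordon-semi} applied to the null-disprisoning $M$ gives geodesic connectedness. For part (3): since $M$ is strongly causal and $u$ is Lorentzian, the graph $\Gamma_u\subset M\times\R$ is a strongly causal, hence null-disprisoning, Lorentzian manifold, and a computation with the second fundamental form of $\Gamma_u$ — the same mechanism underlying Corollary \ref{cor:lorentz-graph} — shows that the function on $\Gamma_u$ induced by $u$ again has infinitesimally strictly convex level sets and satisfies the $\mu$-hypothesis, so applying part (2) to $\Gamma_u$ gives geodesic connectedness of the graph. (Alternatively, if $\rho$ can also be chosen so that $\rho'(u)^2\langle\del f,\del f\rangle>-1$ — there is room, since $u$ Lorentzian means $\langle\del u,\del u\rangle>-1$ — then $f$ itself is Lorentzian and Corollary \ref{cor:lorentz-graph} applies to $f$ directly.)

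The main obstacle is the first step: verifying the algebraic identity that the discriminant of the quadratic in $t$ equals, up to a positive factor, the negative of the $\mu$-numerator divided by $\hess u(\mathbf v,\mathbf v)$, so that the one inequality $\rho''/\rho'>-\mu$ controls positive-definiteness of $\hess f$ uniformly — together with pinning down the behavior at and near $p_0$, which is exactly why $h$ is required to extend continuously to $0$.
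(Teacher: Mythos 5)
Your proposal is correct and follows the same architecture as the paper's: produce $\rho$ from a differential inequality governed by $\mu$, feed $f=\rho\circ u$ into Theorem \ref{thm:gordon-semi} for part (2), and lift $u$ to its graph for part (3). The one real difference is that the paper never performs the Hessian computation itself: Theorem \ref{thm:strict-criterion} is presented as a special case of Theorem \ref{thm:convex-function1}, whose proof is in turn deferred to the Riemannian calculation in \cite{ab-convex}, with $\rho$ taken to solve $\rho''+h\rho'=0$ so that $\rho'(a)=\exp(-\int_0^a h)\ge 1$. Your decomposition $\mathbf x=\mathbf v+tN_p$ with the discriminant analysis reconstructs exactly that calculation, and it checks out: the discriminant of the quadratic in $t$, after division by $4\rho'\,\hess u(\mathbf v,\mathbf v)(N_pu)^2>0$, is negative precisely when $\rho''/\rho'$ exceeds the negative of the $\mu$-integrand, and since that integrand is at most $\eta(p)=\hess u(N_p,N_p)/(N_pu)^2$ the same inequality makes the leading coefficient positive, covering $\mathbf v=0$. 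Your strict choice $g>-h$ is, if anything, cleaner than the paper's equality for obtaining strictness. For part (3), your main route (lift $u$ to the graph and transfer the hypotheses through the factor $1+\langle\del u,\del u\rangle$, which is bounded on sublevels) is exactly the paper's Theorem \ref{thm:warbled-geo-connected}; note that the paper deliberately does \emph{not} apply Corollary \ref{cor:lorentz-graph} to $f$, because $\rho'$ may be large enough to destroy the Lorentzian condition.

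Two cautions. First, discard your parenthetical alternative for (3): making $f$ itself Lorentzian and applying Corollary \ref{cor:lorentz-graph} to $f$ would prove geodesic connectedness of the graph of $f$, which is a different submanifold of $M\times\R$ from the graph of $u$ that the theorem is about. Second, your observation that $\hess f(p_0)=\rho'(0)\,\hess u(p_0)$ is only positive semidefinite when the minimum is degenerate is a genuine issue with the statement of part (1) as written, not with your argument: Theorem \ref{thm:convex-function1} requires nondegenerate critical points for strict convexity, and $u=(x^2+y^2)^2$ on $\E^2$ satisfies every hypothesis of Theorem \ref{thm:strict-criterion} (with $\mu(a)=3/4a$ and $h\equiv 0$) yet admits no smooth $\rho$ with $\rho'\ge 1$ making $\rho\circ u$ strictly convex at the origin. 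Your fallback --- $f$ convex everywhere, strictly convex off $p_0$, and supporting no nonconstant complete geodesic inside a level set --- is exactly what Theorem \ref{thm:gordon-semi} needs, so parts (2) and (3) stand regardless.
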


\begin{remark}
In applications, it is often possible to verify the hypothesis on $\mu$ by showing that \,$\mu$\, is continuous and finite. 
\end{remark}

As an application, we construct a large class of non-convex Lorentzian hypersurfaces 
in $\M$ 
that are geodesically connected (Corollary \ref{warbled-examples}).

In the appendix, we give an example that is geodesically connected by Theorem \ref{thm:main}, yet does not appear to carry orthogonal splittings that satisfy the growth conditions required by
the theory discussed in \cite{cs:survey}. 

%%%%%%%%%%%%%%%%%%%%%%%%%%%%%%%%%%%%%

%%%%%%%%%%%%%%%%%%
%%%%%%%%%%%%%%%%%%
 
\section{Convex functions and geodesic connectedness}

\begin{definition}
Let $f:M\to\R$ be a smooth function on a semi-Riemannian manifold $M$. The \emph{Hessian of $f$} is the symmetric $(0,2)$ tensor field $\hess f$ defined by 
\[\hess f \,(X,Y) \,= \,X\,Y\, f - (\del_{\,X}\!Y)\,f.
\] 
\end{definition}

\begin{definition}
For a semi-Riemannian manifold $M$, $SM$ will denote the unit tangent bundle for some Riemannian metric $g_{\emph{\,Riem}} $ on $M$. 
When we write $SM$ or $S_p\,M$, it means we have made a choice of $g_{\emph{\,Riem}} $. 
\end{definition}

Throughout this section, for a given convex function $f:M \to \R$ 
and any non-critical value $a$ of $f$, we denote the level sets by  $L_a=\{p\in M: f( p)=a\}$ and the sublevel sets by $M_a=\{p\in M: f( p)\le a\}$. 

\begin{lem}\label{lem:exit-or-level}
Let $M$ be a null-disprisoning semi-Riemannian manifold, and $f:M \to \R$ be a nonnegative proper convex function. Suppose the critical set $C$ of $f$ is connected, so $C$ is the minimum set, say $f|C=0$. 
Then one of these two statements holds:

\begin{enumerate}
\item\label{geos-exit}
$M$ is disprisoning,
\item There is a complete non-constant geodesic $\gamma$ such that $f\circ\gamma $ is constant.\end{enumerate}
\end{lem}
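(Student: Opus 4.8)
The plan is to argue by contradiction with disprisoning: assuming $M$ is not disprisoning, I will produce the geodesic of alternative (2). So suppose some inextendible geodesic $\gamma:(a,b)\to M$ has, say, its future end contained in a compact set. Since $f$ is proper and continuous, that compact set may be taken to be a sublevel set $M_{a_0}$; thus there is a sequence $s_n\uparrow b$ with $\gamma(s_n)\in M_{a_0}$, so $\phi:=f\circ\gamma$ is bounded near $b$, and $\phi$ is convex and nonnegative because $f$ is.

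The technical heart is the following claim, which I would isolate as a lemma: \emph{if an inextendible geodesic $\delta$ in $M$ has one end contained in a compact set, then $\delta$ is non-null, that end is complete, and $f\circ\delta$ is bounded and converges along that end.} First, $\delta$ cannot be null, for then that end would be an inextendible null geodesic lying in a compact set, contradicting null-disprisoning. So $c:=\langle\delta',\delta'\rangle\neq0$, and $\phi:=f\circ\delta$ is convex and bounded along that end; since $\phi'$ is monotone, $\phi$ is eventually monotone, hence converges to some $L\ge0$. Now suppose that end is incomplete, say $\delta:[t_0,\beta)\to M$ with $\beta<\infty$ and $\delta([t_0,\beta))$ in a compact $K$. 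The velocity $\delta'$ is an integral curve of the geodesic spray on $TM$; being inextendible with $\beta<\infty$ it must leave every compact subset of $TM$, and since its base point stays in $K$ this forces $|\delta'(r)|_{g_{\mathrm{Riem}}}\to\infty$ as $r\to\beta$. Choosing $r_k\uparrow\beta$ with $\mu_k:=|\delta'(r_k)|_{g_{\mathrm{Riem}}}\to\infty$, and passing to a subsequence so that $\delta(r_k)\to q$ and $\delta'(r_k)/\mu_k\to w\in S_qM$, we get $\langle w,w\rangle=\lim_k c/\mu_k^2=0$, so $w$ is null and nonzero. The rescaled geodesics $\tau_k(u):=\delta(r_k+u/\mu_k)$ converge in $C^\infty_{\mathrm{loc}}$ to the maximal geodesic $\tau$ with $\tau(0)=q$, $\tau'(0)=w$, and for every $u$ in its domain $f(\tau(u))=\lim_k\phi(r_k+u/\mu_k)=L$. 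Thus $\tau$ is an inextendible null geodesic whose image lies in the compact sublevel set $M_L$, contradicting null-disprisoning. Hence $\beta=\infty$, proving the claim.

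Granting the claim, apply it to $\gamma$: $\gamma$ is non-null, $b=\infty$, and $\phi=f\circ\gamma\to L\ge0$. Now I extract a limit geodesic from rescalings at the future end. With $s_n\uparrow\infty$ and (after a subsequence) $\gamma(s_n)\to p$, set $\lambda_n:=|\gamma'(s_n)|_{g_{\mathrm{Riem}}}$ and $v_n:=\gamma'(s_n)/\lambda_n\to v\in S_pM$. The case $\lambda_n\to0$ is impossible, since $\langle v_n,v_n\rangle=c/\lambda_n^2$ with $c\neq0$ would be unbounded while $\langle v_n,v_n\rangle\to\langle v,v\rangle$; the case $\lambda_n\to\infty$ is impossible, because then the $C^\infty_{\mathrm{loc}}$-limit of $\gamma(s_n+\cdot/\lambda_n)$ is a non-constant null geodesic on which $f\equiv L$, hence an inextendible null geodesic lying in the compact set $M_L$, again contradicting null-disprisoning. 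So $\lambda_n\to\lambda\in(0,\infty)$, and $\sigma_n(r):=\gamma(s_n+r/\lambda_n)$ converges in $C^\infty_{\mathrm{loc}}$ to the maximal geodesic $\sigma$ with $\sigma(0)=p$, $\sigma'(0)=v$. For each fixed $r$ one has $s_n+r/\lambda_n\to\infty$, so $f(\sigma(r))=\lim_n\phi(s_n+r/\lambda_n)=L$; thus $f\circ\sigma\equiv L$, and $\sigma$ is non-constant ($v\neq0$) and non-null ($\langle v,v\rangle=c/\lambda^2\neq0$). Finally, since $f\circ\sigma$ is constant, the image of $\sigma$ lies in the compact set $M_L$, so \emph{both} ends of $\sigma$ are contained in a compact set; the claim, applied to each end, shows $\sigma$ is complete. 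Then $\sigma$ is a complete non-constant geodesic along which $f$ is constant, which is alternative (2).

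The main obstacle is the claim, and within it the passage from ``base point confined to a compact set'' to a usable compactness statement: because $g$ is indefinite, a geodesic can stay over a compact base while its velocity runs to infinity in $TM$ along nearly-null directions, so one cannot just take Arzel\`a--Ascoli limits of the geodesics themselves. The device is to rescale by the $g_{\mathrm{Riem}}$-speed and to note that the only degeneration this can produce is a null direction --- and a null geodesic on which $f$ is constant is trapped in a compact sublevel set, which is exactly what null-disprisoning forbids. Some care is also needed to verify that, in each limiting argument, the rescaled geodesics are eventually defined on a common interval and have images in a common compact set, so that the $C^\infty_{\mathrm{loc}}$ limits and the evaluations $f(\tau(u))=\lim_k f(\tau_k(u))$ are legitimate.
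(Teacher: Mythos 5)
Your proof is correct and follows essentially the same strategy as the paper's: trap the imprisoned end in a compact sublevel set, pass to limits of $g_{\mathrm{Riem}}$-unit-rescaled velocities along the end, invoke null-disprisoning to exclude null limit directions (equivalently, blow-up or collapse of $|\gamma'|_{g_{\mathrm{Riem}}}$), and recover via continuous dependence on initial conditions a complete geodesic on which $f$ equals the limit value of $f\circ\gamma$. Your rescaling argument for the incompleteness case (producing a null geodesic imprisoned in a level set) is a more explicit rendering of the paper's Claim 2, but the underlying mechanism is identical.
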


\begin{proof}
Since $M$ is null-disprisoning, $M$ is noncompact. Since $f$ is proper, the values of $f$ are unbounded.
 
Suppose $M$ is not disprisoning. Then there exists $p\in M_a$, and a right-sidedly maximally extended geodesic $\alpha$ with left-hand endpoint $\alpha(0) = p$, such that $\alpha$ does not leave $ M_a$. 	

Suppose $\alpha$ is defined on $[0,b)$, $b\le \infty$. Consider an increasing sequence $t_i\to b$, and a sequence $(\alpha(t_i), \mathbf v_i)\in SM$ where $\mathbf v_i$ has the same direction as $\alpha'(t_i)$. Since the $\alpha(t_i)$ lie in a compact set, we may assume $(\alpha(t_i),\mathbf v_i)\to (q,\mathbf v)\in SM$.

\begin{claim}\label{cl:non-null}
$\mathbf v$ is not null.
\end{claim}
Suppose $\mathbf v$ is null. Then the maximally extended geodesic with left-hand endpoint $q$ and initial condition $(q,\mathbf v)$ leaves $M_a$ by hypothesis. By continuous dependence of geodesics on initial conditions, $\alpha |[t_i, b)$ leaves $M_a$ for $i$ sufficiently large. This contradiction proves the claim.

\begin{claim}\label{cl:infinite-parameter}
$\alpha$ is defined on $[0,\infty)$.
\end{claim}
Suppose $b<\infty$.
Since $\mathbf v$ is not null, as $i$ increases the $(\alpha(t_i),\mathbf v_i)$ lie in compact neighborhoods of $(q,\mathbf v)$ whose intersection is $(q,\mathbf v)$. Then the existence of normal coordinate neighborhoods guarantees that $\alpha$ is the only geodesic that approaches $q$ with bounded affine parameter. Therefore $\alpha$ extends to $\alpha(b)=q$, contradicting maximality.

\begin{claim}\label{cl:constant}
There is a complete non-constant geodesic $\gamma$ such that $f\circ\gamma $ is constant.
\end{claim}

Since $f\circ\alpha$ is convex and bounded above and below, $f\circ\alpha$ is nonincreasing and $$\lim_{t\to\infty}(f\circ\alpha)(t)=c $$ for some $c\le a$.

Choose a sequence $\varepsilon_i\to 0^+$. Let the sequence $t_i$ as above increase to $\infty$ so that 
$c \le f\circ\alpha_{i}\le c+\varepsilon_i$, where $\alpha_{i}:[0,\infty)\to M$ is the geodesic with $\alpha_i(0)=\alpha(t_i)$, $\alpha_i'(0)=\mathbf v_i$. By continuous dependence of geodesics on initial conditions, the geodesic $\gamma:[0,\infty)\to M$ with $\gamma(0)=q$, $\gamma'(0)= \mathbf v$ is defined on $[0,\infty)$ and satisfies $f\circ\gamma= c$. Moreover, we can choose $t_i-i>t_{i-1}$ for each $i$. 
By claim \ref{cl:non-null}, $dt/ds_i$ is bounded above, where $t$ is the parameter of $\alpha$ and $s_i$ is the parameter of $\alpha_i$. It follows that $\gamma$ extends to $(-\infty,\infty)$ with $f\circ\gamma= c$. 
 
This completes the proof of the Lemma.
\end{proof}

\begin{lem}\label{lem:transverse}
Let $f:M\to \R$ be a 
convex function on a semi-Riemannian manifold $M$. 
Suppose a geodesic $\gamma$ satisfies\, $\gamma|[t_0-\varepsilon, t_0)\subset M_a - L_a $,\, $\gamma(t_0)\in L_a$, where $a$ is a non-critical value of $f$. 
Then $\gamma$ intersects $L_a$ transversely at $\gamma(t_0)$. 
\end{lem}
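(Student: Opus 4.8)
The plan is to reduce the whole statement to an elementary one-variable convexity fact about $g \df f\circ\gamma$. Since $f$ is convex and $\gamma$ is a geodesic, $g$ is a smooth convex function of its real parameter, so $g'' = \hess f(\gamma',\gamma') \ge 0$ and hence $g'$ is nondecreasing. Because $a$ is a non-critical value of $f$, the level set $L_a$ is a regular embedded hypersurface with $T_{\gamma(t_0)}L_a = \ker (df)_{\gamma(t_0)}$; thus ``$\gamma$ meets $L_a$ transversely at $\gamma(t_0)$'' is exactly the assertion that $(df)_{\gamma(t_0)}(\gamma'(t_0)) = g'(t_0) \neq 0$. So it suffices to show $g'(t_0)\neq 0$.

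Next I would unwind the hypothesis on $\gamma$. Since $M_a - L_a = \{f < a\}$, the assumption $\gamma|[t_0-\varepsilon,t_0)\subset M_a - L_a$ says $g(t) < a$ for all $t\in[t_0-\varepsilon,t_0)$, while $\gamma(t_0)\in L_a$ gives $g(t_0) = a$.

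The heart of the argument is then a one-line contradiction. Suppose $g'(t_0)\le 0$. Since $g'$ is nondecreasing, $g'(t)\le g'(t_0)\le 0$ for every $t\in[t_0-\varepsilon,t_0]$, so $g$ is nonincreasing on that interval; hence $g(t)\ge g(t_0) = a$ for all $t\in[t_0-\varepsilon,t_0)$, contradicting $g(t) < a$. Therefore $g'(t_0) > 0$, which is transversality (and in fact shows $\gamma$ passes from $\{f<a\}$ into $\{f>a\}$ at $t_0$).

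There is no genuine obstacle here: the only inputs are that the derivative of a convex function of one variable is monotone, and the standard identification of the tangent space to a regular level set with the kernel of the differential. The single point worth a line of care is the role of the non-critical value hypothesis, which is needed not for the estimate on $g'$ but to ensure $L_a$ is a smooth hypersurface, so that ``transverse'' is even meaningful.
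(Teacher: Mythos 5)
Your argument is correct and is essentially the paper's proof spelled out in more detail: both reduce to the one-variable fact that convexity of $f\circ\gamma$ together with $(f\circ\gamma)(t_0-\varepsilon)<(f\circ\gamma)(t_0)$ forces $(f\circ\gamma)'(t_0)>0$, which is transversality. No differences worth noting.
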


\begin{proof}
Since $ f \circ \gamma $ is convex and $ (f \circ \gamma) (t_0-\varepsilon) < (f \circ \gamma) (t_0)$, then $(f \circ \gamma)'(t_0)>0$. Hence $\gamma'(t_0)$ is not tangent to the level set $L_a$. 
\end{proof}

\begin{lem}\label{lem:connected-levels}
Let $M$ be a disprisoning semi-Riemannian manifold and $f:M\to \R$ be a nonnegative and non-constant proper convex function. 
Suppose the critical set $C$ of $f$ is connected, so $C$ is the minimum set, say $f|C=0$.
For $a>0$ and $p\in M_a-L_a$, consider
the map $\psi_{(p,a)}:S_p\,M\to L_a$, where $\psi_{(p,a)}(\mathbf v)$ is the first point at which the geodesic of $M$ with initial velocity $\mathbf v$ leaves $M_a$. Then:
\begin{enumerate}
\item\label{continuous-escape}
 $\psi_{(p,a)}$ is continous;
 \item\label{continuous-escape1}
 $\psi_{(p,a)}$ varies continously with $p \in M_a-L_a$;
 \item\label{connected-L}
 $L_a$ is connected.
 \end{enumerate}
\end{lem}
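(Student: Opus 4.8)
The plan is to build the needed continuity from the disprisoning hypothesis via a transversality/exit-time argument, then deduce connectedness of $L_a$ by exhibiting it as a continuous image of a sphere. First I would fix $p\in M_a - L_a$ and $\mathbf v\in S_p M$, and let $\gamma_{\mathbf v}$ be the geodesic with $\gamma_{\mathbf v}(0)=p$, $\gamma_{\mathbf v}'(0)=\mathbf v$. Since $M$ is disprisoning, $\gamma_{\mathbf v}$ cannot remain in the compact-looking set $M_a$; more precisely, either $\gamma_{\mathbf v}$ is incomplete (and then, as in the proof of Lemma \ref{lem:exit-or-level}, a finite-parameter end forced to stay in $M_a$ would have a non-null limit direction and hence extend, a contradiction — so the incomplete end leaves $M_a$), or $\gamma_{\mathbf v}$ is complete and by disprisoning it leaves the compact set $M_a$. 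In either case there is a first exit parameter $\tau(\mathbf v)\ge 0$, and $\gamma_{\mathbf v}(\tau(\mathbf v))\in L_a$, so $\psi_{(p,a)}(\mathbf v):=\gamma_{\mathbf v}(\tau(\mathbf v))$ is well-defined. The key structural input is Lemma \ref{lem:transverse}: at the first exit point the geodesic crosses $L_a$ transversely, because $(f\circ\gamma_{\mathbf v})'(\tau(\mathbf v))>0$.

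Next I would prove continuity of $\tau$, and hence of $\psi_{(p,a)}$, on $S_p M$, and jointly in $p$. Upper semicontinuity of $\tau$: if $\gamma_{\mathbf v}$ crosses $L_a$ transversely at time $\tau(\mathbf v)$, then by the transversality and continuous dependence of geodesics on initial conditions, nearby geodesics also cross $L_a$ at a time close to $\tau(\mathbf v)$, so $\limsup \tau \le \tau(\mathbf v)$. Lower semicontinuity: if $\tau(\mathbf v_i)\to \tau_* < \tau(\mathbf v)$ along some sequence $\mathbf v_i\to\mathbf v$, then $\gamma_{\mathbf v}(\tau_*)\in L_a$ by continuity of the exit points, forcing $f\circ\gamma_{\mathbf v}$ to attain the value $a$ at $\tau_* < \tau(\mathbf v)$; but on $[0,\tau(\mathbf v))$ we have $f\circ\gamma_{\mathbf v} < a$ since $\tau(\mathbf v)$ is the \emph{first} exit time and $\gamma_{\mathbf v}$ starts in $M_a - L_a$ — a contradiction. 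This gives \eqref{continuous-escape}; the same argument with $(p,\mathbf v)$ ranging over $SM|_{M_a - L_a}$, using that geodesics depend continuously on their full initial condition, gives \eqref{continuous-escape1}. Here one should note that transversality is exactly what prevents the exit time from jumping, so the convexity of $f$ enters in an essential way through Lemma \ref{lem:transverse}.

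For \eqref{connected-L}, I would argue that $\psi_{(p,a)}$ is \emph{surjective} onto $L_a$: given $q\in L_a$, the geodesic segment — or more robustly, just note that $p$ lies in the interior of the sublevel set and any point of $L_a$ can be reached by \emph{some} geodesic leaving $M_a$ — but cleanly, one takes $q\in L_a$ and runs the geodesic from $p$ in the direction of $q$; rather, the surjectivity follows because $M_a$ is connected (it deformation-retracts onto the connected critical set $C$, or one invokes connectedness of $M$ together with $f\ge 0$ proper) with $L_a$ as its topological boundary, and every boundary point is hit by a radial-type geodesic from the interior point $p$. Then $L_a = \psi_{(p,a)}(S_p M)$ is the continuous image of the connected sphere $S_p M$, hence connected.

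The main obstacle I anticipate is \emph{surjectivity} of $\psi_{(p,a)}$ onto $L_a$ in the semi-Riemannian setting: unlike the Riemannian case there need be no minimizing geodesic from $p$ to a prescribed $q\in L_a$, and the exponential map at $p$ need not be surjective. The clean way around this is to argue via the boundary: $M_a$ is a connected manifold-with-boundary $L_a$ (using that $a$ is a non-critical value so $L_a$ is a smooth hypersurface), $p$ is an interior point, and for any $q\in L_a$ one produces a geodesic from $p$ exiting exactly at $q$ by a continuity/degree argument on $\psi_{(p,a)}$ combined with the fact that $\psi_{(p,a)}$ is an open map near transverse exit points — or, alternatively, by noting that if $\psi_{(p,a)}(S_p M)$ were a proper closed-and-open subset of $L_a$ we would contradict connectedness of $M_a$. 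I would make this precise by showing $\psi_{(p,a)}(S_pM)$ is both open and closed in $L_a$: closed because $S_pM$ is compact and $\psi_{(p,a)}$ is continuous, open because near a transverse exit point the map $\mathbf v\mapsto \psi_{(p,a)}(\mathbf v)$ is a local submersion onto $L_a$ (the exponential map composed with projection along the transverse exit has full-rank differential there). Connectedness of $L_a$ then follows, completing the proof.
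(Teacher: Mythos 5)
Your treatment of parts (\ref{continuous-escape}) and (\ref{continuous-escape1}) is essentially the paper's: the exit point is well defined because $M_a=f^{-1}([0,a])$ is compact and $M$ is disprisoning, and continuity of the first exit time follows from the transversality supplied by Lemma \ref{lem:transverse} together with continuous dependence of geodesics on initial conditions. That part is correct.

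Part (\ref{connected-L}) has a genuine gap. You want connectedness of $L_a$ to follow from surjectivity of $\psi_{(p,a)}$, and you propose to get surjectivity by showing the image is open and closed in $L_a$. But a nonempty clopen subset of $L_a$ equals $L_a$ only if $L_a$ is already connected --- exactly what is to be proved. Since $S_pM$ is connected, the image $\psi_{(p,a)}(S_pM)$ is connected and therefore lies in a single component $L'_a$; your clopen argument can at best show the image is all of $L'_a$, and nothing in your proposal rules out a second component that no geodesic from $p$ ever reaches. Moreover the ``open'' half is itself doubtful: $\psi_{(p,a)}$ maps the $(n-1)$-sphere $S_pM$ into the $(n-1)$-manifold $L_a$, so ``local submersion'' means local diffeomorphism, and the differential of $\mathbf v\mapsto \exp_p(\tau(\mathbf v)\mathbf v)$ can degenerate at conjugate points, which the spaces in this paper are explicitly allowed to have; transversality of the exit gives continuity of $\tau$, not invertibility of the differential of $\psi_{(p,a)}$. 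The paper closes the gap differently: by (\ref{continuous-escape}) the set of directions whose geodesics first leave $M_a$ through a fixed component $L'_a$ is clopen in the connected sphere $S_pM$, so all geodesics from $p$ exit through $L'_a$; by (\ref{continuous-escape1}) and connectedness of $M_a-L_a$ (obtained from connectedness of $M_a$ and the downward gradient-flow diffeomorphism $M-C\cong(0,\infty)\times L_{\hat a}$) the same component works for every base point in $M_a-L_a$; and finally any point $q$ of a putative second component $L''_a$ has a normal coordinate neighborhood $U$ with $L_a\cap U\subset L''_a$, so a radial geodesic toward $q$ from a nearby point of $M_a-L_a$ first leaves $M_a$ through $L''_a$, a contradiction. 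You would need to supply this last step (or an equivalent), since surjectivity of $\psi_{(p,a)}$ onto all of $L_a$ cannot be established before connectedness is known.
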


\begin{proof}
\setcounter{claim}{0}
\begin{claim}\label{cl:connected-sublevel}
$M_a$ is connected.
\end{claim}

If $[a,b]$ contains no critical value of $f$ then $M_a$ is a deformation retract of $M_b$, where for any choice of Riemannian metric $g_{\text{\,Riem}} $ on $M$, the retraction map may be taken along downward gradient curves of $f$ reparameterized by values of $f$. Thus for any $a>0$, $M_a$ is a deformation retract of $M$. By properness of $f$ and connectedness of $C$, $M$ is connected. Since $M$ is connected, $M_a$ is connected.

\begin{claim}\label{cl:non-crit-homeomorphism}
The level sets $L_a$, $a>0$, are continuously diffeomorphic. Specifically, fix $\hat a >0$. 
Then there is a diffeomorphism $\varphi:M-C\to (0,\infty)\times L_{\hat a}$ such that $\varphi|L_a$ is a diffeomorphism onto $\{a\}\times L_{\hat a}$.
\end{claim}

By disprisonment and properness, $M$ is noncompact and $f$ is unbounded. For any choice of Riemannian metric $g_{\text{\,Riem}} $ on $M$, the map $\varphi$ is given by the downward gradient curves of $f$,
reparametrized by the value of $f$.

\begin{claim}\label{cl:connected-sublevel1}
$M_a-L_a$ is connected.
\end{claim}
Immediate from claims \ref {cl:connected-sublevel} and \ref{cl:non-crit-homeomorphism}. 

\vspace{2mm}

(\ref{continuous-escape}) and (\ref{continuous-escape1}) in our lemma statement are consequences of Lemma \ref{lem:transverse}, which implies that for geodesics whose initial directions in $S_pM$ converge to that of $\gamma$, the parameter value of first departure from $M_a$ also converges to that of $\gamma$.

To prove (\ref{connected-L}), suppose a geodesic $\gamma$ from $p\in M_a-L_a$ first leaves $M_a$ by intersecting the component $L'_a$ of $L_a$. 
By (\ref{continuous-escape}), the directions of geodesics that first leave $M_a$ by intersecting $L'_a$ form a nonempty open and closed subset of $S_pM$. Thus every geodesic from $p$ first leaves $M_a$ by intersecting $L'_a$.

By (\ref{continuous-escape1}), the points $p\in M_a-L_a$ from which geodesics from $p$ first leave $M_a$ by intersecting $L'_a$ form an open and closed subset of $M_a-L_a$, hence all of $M_a-L_a$ by claim \ref{cl:connected-sublevel1}. 

There can be no component $L''_{a} \ne L'_a$ of $L_a$. Indeed, a point of $L''_{a}$ would have a normal coordinate neighborhood $U$ in $M$ such that $L_a\cap U$ lies in $L''_{a}\cap U$. Hence there would be geodesics from points in $M_a-L_a$ that first leave $M_a$ by intersecting $L''_{a}$, a contradiction. \end{proof}

The following technical lemma will be used to prove our theorems on geodesic connectedness, in particular Theorems \ref{thm:gordon-semi} and \ref{thm:main}.
 In both cases, the conditions on $C$ will be easily verified. 

\begin{lem}\label{lem:semi-geo-conn}
Let $M$ be a disprisoning semi-Riemannian manifold and $f:M\to \R$ be a nonnegative proper convex function. Suppose any two points of the critical set $C$ of $f$ are joined by a geodesic of $M$, and $C$ has an oriented neighborhood $U$ in $M$. Then $M$ is oriented.

Suppose further that for some non-critical value $a$, there is $p\in M_a-L_a$ such that the map $\psi_{(p,a)}:S_p\,M\to L_a$ has nonzero degree, where 
$\psi_{(p,a)}(\mathbf v)$ is the first point at which the geodesic of $M$ with initial velocity $\mathbf v$ leaves $M_a$. 
 Then $M$ is geodesically connected.
\end{lem}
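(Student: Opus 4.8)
The plan is to prove the two assertions in turn. We may assume $f$ is non-constant (otherwise $C=M$ and geodesic connectedness is the hypothesis). As in the earlier lemmas, $C$ is the minimum set of $f$: a geodesic joining two critical points has $f$ convex along it with vanishing derivative at both ends, hence is a segment on which $f$ is constant, so $f$ is constant on $C$, and properness forces $f$ to attain this value, which is therefore $\min f$; normalize $f|_C=0$. Also $M$ is connected, since otherwise $f$ restricted to each component would attain a minimum and $C$ would meet two components, which no single geodesic can. Consequently all conclusions of Lemma~\ref{lem:connected-levels} are available here, as their proofs use ``$C$ connected'' only to deduce that $M$ is connected. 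For orientability I would choose, using properness and $C\subset U$, a small $a'>0$ with $M_{a'}\subset U$ (else a sequence $x_k\in M_{1/k}-U$ in the compact set $M_1$ would subconverge to a point of $C$ outside $U$); since $f$ has no critical value in $(0,\infty)$, the downward gradient flow of $f$ for a Riemannian metric, reparametrized by the value of $f$, deformation retracts $M$ onto $M_{a'}$. Then $M\simeq M_{a'}$, and $M_{a'}$ is orientable as a subset of the oriented manifold $U$; since $TM|_{M_{a'}}=T(M_{a'})$ and $H^1(M;\Z/2)\to H^1(M_{a'};\Z/2)$ is an isomorphism, $w_1(TM)=0$, so $M$ is oriented.

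For geodesic connectedness, the core step is to upgrade the single hypothesis $\deg\psi_{(p,a)}\neq0$ to the statement that $\deg\psi_{(q,c)}\neq0$ for \emph{every} $c>0$ and every $q$ with $f(q)<c$ (note that, by orientability, $L_c$ is a closed, connected by Lemma~\ref{lem:connected-levels}(\ref{connected-L}), orientable hypersurface bounding $M_c$, so these degrees are defined). I would deduce this from two facts. First, for fixed $c$ the degree $\deg\psi_{(q,c)}$ is independent of $q\in M_c-L_c$, because that set is connected (Lemma~\ref{lem:connected-levels}) and $\psi_{(q,c)}$ varies continuously with $q$ (Lemma~\ref{lem:connected-levels}(\ref{continuous-escape1})). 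Second, for fixed $q$ the maps $\bar\varphi_t\circ\psi_{(q,t)}: S_qM\to L_{\hat a}$, $t\in(f(q),\infty)$, form a homotopy, where $\bar\varphi_t: L_t\to L_{\hat a}$ comes from the diffeomorphism $\varphi: M-C\to(0,\infty)\times L_{\hat a}$ of Lemma~\ref{lem:connected-levels} by projection; indeed, since $f$ is convex along each geodesic from $q$ and takes a value $<t$ at $q$, it meets the level $t$ for the first time transversely (it cannot have a minimum equal to $t$), so the first-exit parameter from $M_t$, hence the exit point, depends continuously on $(t,\mathbf v)$, and homotopy invariance of degree applies. Combining: by the first fact $\deg\psi_{(q_0,a)}\neq0$ for $q_0\in C\subset M_a-L_a$; by the second, $\deg\psi_{(q_0,c)}\neq0$ for all $c>0$; by the first again, $\deg\psi_{(q,c)}\neq0$ for all $c>0$ and all $q$ with $f(q)<c$. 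A map of nonzero degree between closed connected oriented equidimensional manifolds is onto, so each such $\psi_{(q,c)}$ is surjective onto $L_c$.

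With surjectivity in hand I would join an arbitrary pair $q\neq y$ by a geodesic in three cases. If $f(q)=f(y)=0$, then $q,y\in C$ and we are done by hypothesis. If $f(q)\neq f(y)$, say $f(q)<f(y)=:c$, then $c>0$ and $f(q)<c$, so surjectivity of $\psi_{(q,c)}$ produces $\mathbf v\in S_qM$ whose geodesic first meets $L_c$ at $y$, which is the desired segment. Finally, if $f(q)=f(y)=c>0$, flow $q$ a little along the negative gradient to get $q_k\to q$ with $q_k\in M_c-L_c$; surjectivity of $\psi_{(q_k,c)}$ gives geodesics $\gamma_k$ with $\gamma_k(0)=q_k$, $\gamma_k'(0)=\mathbf v_k\in S_{q_k}M$, first leaving $M_c$ at $y$ at parameters $T_k>0$, with $\gamma_k([0,T_k])\subset M_c$. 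Since the $(q_k,\mathbf v_k)$ lie in the unit tangent bundle over the compact set $M_c$, a subsequence converges to some $(q,\mathbf v)\in SM$. The parameters $T_k$ are bounded: if $T_k\to\infty$, then for every $t$ in the forward maximal domain of the geodesic $\gamma$ from $(q,\mathbf v)$ we have $\gamma(t)=\lim_k\gamma_k(t)\in M_c$, so $\gamma$ would have its entire forward half contained in the compact set $M_c$, violating the disprisoning hypothesis (cf. the proof of Lemma~\ref{lem:exit-or-level}). Passing to a further subsequence with $T_k\to T<\infty$, the same confinement argument shows the forward maximal domain of $\gamma$ contains $[0,T]$, and then continuous dependence gives $\gamma(0)=q$ and $\gamma(T)=\lim_k\gamma_k(T_k)=y$; as $q\neq y$, $\gamma|_{[0,T]}$ is a non-constant geodesic joining $q$ and $y$. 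This proves $M$ is geodesically connected.

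I expect the main obstacle to be the equal-level case: manufacturing a genuine geodesic between the two given points out of geodesics between nearby points, where the only available leverage is that disprisoning prevents a forward-inextendible geodesic from remaining in the compact sublevel set $M_c$ — this is exactly the kind of limiting control already exercised in Lemma~\ref{lem:exit-or-level}. A secondary technical point is the joint continuity in $(t,\mathbf v)$ of the first-exit point used in the homotopy of the second degree fact, which rests on the transversality produced by convexity of $f$ along geodesics. The remaining ingredients — connectedness of $M$ and of $M_c-L_c$, the product structure of $M-C$, orientability — are routine or supplied by Lemma~\ref{lem:connected-levels}.
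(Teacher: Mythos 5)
Your proposal is correct and follows essentially the same route as the paper: orient $M$ by pushing the orientation of $U$ out along the gradient flow, show $\deg\psi_{(q,c)}$ is constant in $q$ (connectedness of $M_c-L_c$ plus continuous dependence) and in $c$ (the homotopy through $(\varphi|L_c)\circ\psi_{(q,c)}$), conclude surjectivity from nonzero degree, and handle the equal-level case by a limit of geodesics from nearby interior points, with disprisonment ruling out unbounded exit parameters. The only differences are cosmetic (e.g.\ your $w_1$ argument for orientability versus the paper's positively-oriented atlas), and your treatment of the limiting geodesic in the equal-level case is, if anything, slightly more careful than the paper's.
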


\begin{proof}
\setcounter{claim}{0}
By convexity of $f$, critical points of $f$ are local minima, $C$ is geodesically connected, and $C$ is the minimum set of $f$, say $f|C=0$. 

\begin{claim}\label{cl:orientable-levels}
$M$ and the level sets $L_a$, $a>0$, have an orientation determined by the given oriented neighborhood $U$ of $C$.
\end{claim}

Downward gradient flow of $f$ in $\,g_{\text{\,Riem}} $ carries a coordinate neighborhood of each point in $M$ diffeomorphically into $U$. Hence coordinates on $M$ may be chosen so that all transition functions have positive determinant. This orientation of $M$ induces an orientation on each $L_a$ by requiring the coordinate basis of 
 $T_pL_a$, followed by the gradient vector in $\,g_{\text{\,Riem}} $ of $f$ at $p$, to be a positively oriented basis of $T_pM$.

\begin{claim}\label{cl:constant-degree}
For any $a>0$, the degree of $\psi_{(p,a)}$ is constant for all $p\in M_a-L_a$.
\end{claim} 

The level set $L_a$ is compact by properness of $f$, connected by Lemma \ref{lem:connected-levels}, and oriented by claim \ref{cl:orientable-levels}. Thus degree of $\psi_{(p,a)}$ is defined.

By claim \ref{cl:connected-sublevel1} of Lemma \ref{lem:connected-levels}, two points of $M_a-L_a$ are joined by a path $\alpha:[0,1]\to M_a-L_a$. Parallel translation along $\alpha$ with respect to $\,g_{\text{\,Riem}}$ identifies the pull-back bundle $\alpha^*(SM )$ of $SM$ along $\alpha$ homeomorphically with $S^{n-1}\times [0,1]$. By 
disprisonment and Lemma \ref{lem:connected-levels}, the maps $\psi_{(\alpha(t),a)}:S_{\alpha(t)}M\to L_a$ determine a one-parameter family of continuous maps $S^{n-1}\to L_a$. Since these maps vary continuously in $t$, their degree is constant. 

\begin{claim}\label{cl:geo-conn-<a}
For any $a>0$ and every $p\in M_a-L_a$, there is a geodesic from $p$ to every point of $L_a$.
\end{claim}
 By claim \ref{cl:constant-degree} of Lemma \ref{lem:connected-levels}, the map
$$(\varphi |L_a)\circ\psi_{(p,a)}:S_pM\to \{a\}\times L_ {\hat a}$$
has constant degree for all $p\in M_a-L_a$. Moreover, this map
varies continously in $a$, and so has constant degree for all $a>0$ and all $p\in M_a-L_a$. Then the claim follows from our degree hypothesis.

\begin{claim}\label{cl:geo-conn-=a}
For any $a>0$ and every $p\in M_a$, there is a geodesic from $p$ to every point of $L_a$.
\end{claim} 
By claim \ref{cl:geo-conn-<a}, it suffices to show that any two distinct points $p,q\in L_a$ are joined by a geodesic. 
In particular, for $p_i\to p$, $p_i\in M_a-L_a$, there is a geodesic $\gamma_i\,:\,[0,c_i]\to M_a$ satisfying $\gamma_i(0)=p_i$, $\gamma_i(c_i)=q$, $(p_i,\gamma_i'(0))\in SM$.
Since $M_a$ is compact, we may assume 
the sequence $(p_i,\gamma_i'(0))$ converges in $SM$ to $(p,\mathbf v)\in SM$, $\mathbf v\ne 0$. We may also assume $c_i\to c\in (0,\infty]$. 

Let $\gamma$ be the maximally extended geodesic with $\gamma(0)= p$, $\gamma'(0)=\mathbf v$. If $c<\infty$, then $\gamma\,|\,(0,c]$\, is defined and joins $p$ to $q$. If $c=\infty$, then $\gamma$ is defined on $[0,\infty)$ and does not leave $M_a$, a contradiction since $M$ is disprisoning.

 \begin{claim}\label{cl:geo-conn}
For every $p\in M_a$, there is a geodesic from $p$ to every $q\in M_a$.
\end{claim}
If $p,q\in C$, the claim is true by hypothesis. Otherwise, set $$a=\max\{f(p), f(q)\}.$$
Then claim \ref{cl:geo-conn} follows from claims \ref{cl:geo-conn-<a} and \ref{cl:geo-conn-=a}. 

\vspace{2mm}
Hence the lemma.
\end{proof}

 \emph{Proof of Theorem \ref{thm:gordon-semi}.}  
Since the critical set is $C=\{p\}$, $C$ has an orientable neighborhood. By Lemmas \ref{lem:exit-or-level} and \ref{lem:semi-geo-conn}, it suffices now to observe that if $a$ is sufficiently small, $\psi_{(p,a)}$ maps $S_p\,M$ diffeomorphically onto $L_a$ and hence has degree one.
\qed

%%%%%%%%%%%%%%%%%%%%%%%%%%%%%%%%%%%%%%%%%%%%%%%%%%%%%%%%%%%%%%%%%%%%%%
 
\section {Graphs and geodesic connectedness}
Let us briefly review some basic Lorentzian terminology.

Suppose $(M,\langle \cdot, \cdot \rangle)$ is a Lorentzian manifold. A nonzero tangent vector $\mathbf v$ is \emph{timelike, spacelike, non-spacelike} or \emph{null} according to whether $\langle \mathbf v,\mathbf v\rangle$ is negative, positive, non-positive or zero respectively. For each $p \in M$ the set of all non-spacelike vectors in $T_p M$ consists of two connected components, that may be called \emph{hemicones}. A continuous choice of hemicone for all $p \in M$ is called a \emph{time orientation} of $M$. 
A Lorentzian manifold with a choice of time orientation is called a \emph{space-time}.

In a space-time, the vectors in the chosen hemicones are called \emph{future-pointing}. 
For two points $p,q \in M$ we write $p \leqslant q$ if $p = q$ or if there is a piecewise smooth curve with future-pointing 
(possibly one-sided) 
tangent vectors from $p$ to $q$. The \emph{causal future} of $p\in M$ is $J^+_M(p) = \{ q \in M : p \leqslant q \}$ and the \emph{causal past} is $J^-_M(p) = \{ q \in M : q \leqslant p \}$. 

%Following \cite{bee}, a continuous curve $\gamma : (a,b) \to M$ is said to be a \emph{future-directed non-spacelike curve} if for each $t_0 \in (a,b)$ there is an $\varepsilon > 0$ and a convex normal neighborhood $U(\gamma(t_0))$ of $t_0$ with $\gamma(t_0 - \varepsilon, t_0 + \varepsilon) \subseteq U(\gamma(t_0))$ such that given any $t_1, t_2$ with $ t_0 - \varepsilon < t_1 < t_2 < t_0 + \varepsilon$, there is a smooth future-directed non-spacelike curve in $U(\gamma(t_0))$ from $\gamma(t_0)$ to $\gamma(t_1)$.

An open neighborhood in a space-time $M$ is \emph{causally convex} if every piecewise smooth curve with future-pointing tangent vectors intersects it in a connected set. A space-time is \emph{strongly causal} if every point has arbitrarily small causally convex neighborhoods. A strongly causal spacetime is \emph{globally hyperbolic} if $J^+_M(p) \cap J^-_M(q)$ is compact for all $p,q \in M$.

A \emph{Cauchy hypersurface} $\Sigma$ is a 
hypersurface that 
is intersected by every inextendible causal curve exactly once. 
A space-time is globally hyperbolic if and only if it admits a Cauchy hypersurface
 \cite[p. 211]{he}.

In this section, we prove Corollary \ref{cor:lorentz-graph} on geodesic connectedness of graphs of strictly convex functions.

\begin{lem}\label{lem:convex-lift}
Suppose $M$ is a Lorentzian manifold and $ u : M \to \R $ is a Lorentzian function. Let $\Gamma(u) $ be the graph of $u$ in $ M \times \R $. Let the function $ f : \Gamma(u) \to \R $ be the lift of $u$, defined by $ f(p, u(p)) = u(p) $. Then for any vectors $\overline{\mathbf x}, \overline{\mathbf y} \in T_{(p, u(p))} \Gamma(u)$, with corresponding vectors $\mathbf x, \mathbf y \in T_p M$ obtained by projection onto $M$, \begin{equation}\label{lem:hess-of-lift}
\hess f\left( \overline{\mathbf x}, \overline{\mathbf y} \right) = \frac{ \hess u(\mathbf x,\mathbf y) }{1 + \langle (\del u)_p, (\del u)_p \rangle }.
\end{equation}
\end{lem}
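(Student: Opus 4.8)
The plan is to compute the Hessian of $f$ on $\Gamma(u)$ directly in terms of the ambient product metric on $M\times\R$, using the Gauss formula to relate the intrinsic connection on the submanifold $\Gamma(u)$ to the ambient connection and the second fundamental form. First I would set up notation: parametrize $\Gamma(u)$ by the graph map $\iota:M\to M\times\R$, $\iota(p)=(p,u(p))$, so that a tangent vector $\overline{\mathbf x}\in T_{(p,u(p))}\Gamma(u)$ corresponds to $\mathbf x\in T_pM$ via $\overline{\mathbf x}=(\mathbf x, (\del u)_p\,\mathbf x)=(\mathbf x, \mathbf x u)$. The product metric gives $\langle\overline{\mathbf x},\overline{\mathbf y}\rangle_{M\times\R}=\langle\mathbf x,\mathbf y\rangle_M - (\mathbf x u)(\mathbf y u)$, where the sign reflects the $-dt^2$ on the $\R$-factor appropriate to the Lorentzian/timelike-graph setting; the hypothesis $\langle(\del u)_p,(\del u)_p\rangle>-1$ guarantees this induced metric is nondegenerate, so the Gauss formula applies.

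Next I would identify the unit normal to $\Gamma(u)$ in $M\times\R$. A short computation shows the vector $\mathbf n=(\del u, 1)$ (i.e. $(\del u)_p$ in the $M$-slot and $1$ in the $\R$-slot) is orthogonal to every $\overline{\mathbf x}=(\mathbf x,\mathbf x u)$, since $\langle(\del u,1),(\mathbf x,\mathbf x u)\rangle = \langle\del u,\mathbf x\rangle - (\mathbf x u) = \mathbf x u - \mathbf x u = 0$, and that $\langle\mathbf n,\mathbf n\rangle = \langle\del u,\del u\rangle - 1$. Since the graph is timelike, this normal is spacelike (or has the appropriate causal character), and the denominator $1+\langle(\del u)_p,(\del u)_p\rangle$ in \eqref{lem:hess-of-lift} will emerge precisely as $-\langle\mathbf n,\mathbf n\rangle$ after the normalization. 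Then the Gauss formula reads $\del^{M\times\R}_{\overline{\mathbf x}}\overline{\mathbf y} = \del^{\Gamma(u)}_{\overline{\mathbf x}}\overline{\mathbf y} + \II(\overline{\mathbf x},\overline{\mathbf y})$, with the second fundamental form valued in the normal direction, so $\II(\overline{\mathbf x},\overline{\mathbf y}) = \bigl(\langle\del^{M\times\R}_{\overline{\mathbf x}}\overline{\mathbf y},\mathbf n\rangle/\langle\mathbf n,\mathbf n\rangle\bigr)\mathbf n$.

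The main computation is then to extend $\overline{\mathbf y}$ to a vector field along $\Gamma(u)$, apply the product-manifold Levi-Civita connection (which splits as the connection of $M$ on the first factor plus the flat connection on $\R$), and read off both $\overline{\mathbf x}\,\overline{\mathbf y}f$ and $(\del^{\Gamma(u)}_{\overline{\mathbf x}}\overline{\mathbf y})f$. Concretely, since $f=u\circ\mathrm{pr}_M$ on $\Gamma(u)$, derivatives of $f$ along $\Gamma(u)$ equal the corresponding derivatives of $u$ along $M$; the $\R$-component of $\del^{M\times\R}_{\overline{\mathbf x}}\overline{\mathbf y}$ is $\mathbf x(\mathbf y u)$ and its $M$-component is $\del^M_{\mathbf x}\mathbf y$, so $\langle\del^{M\times\R}_{\overline{\mathbf x}}\overline{\mathbf y},\mathbf n\rangle = \langle\del^M_{\mathbf x}\mathbf y,\del u\rangle - \mathbf x(\mathbf y u)$. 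Using $\langle\del^M_{\mathbf x}\mathbf y,\del u\rangle=(\del^M_{\mathbf x}\mathbf y)u$ and the definition $\hess u(\mathbf x,\mathbf y)=\mathbf x(\mathbf y u)-(\del^M_{\mathbf x}\mathbf y)u$, this inner product is exactly $-\hess u(\mathbf x,\mathbf y)$. Therefore $\hess f(\overline{\mathbf x},\overline{\mathbf y}) = \langle\del^{M\times\R}_{\overline{\mathbf x}}\overline{\mathbf y} - \II(\overline{\mathbf x},\overline{\mathbf y}),\mathbf n\rangle$-type bookkeeping collapses to $\hess u(\mathbf x,\mathbf y)$ divided by $-\langle\mathbf n,\mathbf n\rangle = 1+\langle\del u,\del u\rangle$, which is \eqref{lem:hess-of-lift}.

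The step I expect to be the main obstacle — really the only place care is needed — is the bookkeeping of signs and of which pieces of $\del^{M\times\R}_{\overline{\mathbf x}}\overline{\mathbf y}$ are tangential versus normal, since the normal $\mathbf n$ has a nonzero $M$-component $\del u$ and so "tangential part along $\Gamma(u)$" does not simply mean "the $M$-component." The clean way to avoid errors is to never decompose by factors but to decompose orthogonally: $\del^{M\times\R}_{\overline{\mathbf x}}\overline{\mathbf y} = \del^{\Gamma(u)}_{\overline{\mathbf x}}\overline{\mathbf y} + \lambda\,\mathbf n$ with $\lambda = \langle\del^{M\times\R}_{\overline{\mathbf x}}\overline{\mathbf y},\mathbf n\rangle/\langle\mathbf n,\mathbf n\rangle$, observe that $\hess f(\overline{\mathbf x},\overline{\mathbf y}) = \overline{\mathbf x}(\overline{\mathbf y}f) - (\del^{\Gamma(u)}_{\overline{\mathbf x}}\overline{\mathbf y})f$ while the ambient analogue with $\del^{M\times\R}$ in place of $\del^{\Gamma(u)}$ differs by $\lambda\,(\mathbf n f)$, and note $\mathbf n f = (\del u)u$ need not vanish — but in computing the Hessian of the lift $f$ we only ever need derivatives of $f$ along directions tangent to $\Gamma(u)$, and the ambient Hessian of $u\circ\mathrm{pr}_M$ as a function on $M\times\R$ restricted to graph directions already equals $\hess u(\mathbf x,\mathbf y)$ because $u\circ\mathrm{pr}_M$ is constant in the $\R$-direction in the right sense. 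Carefully tracking this reduces the whole lemma to the one-line identity $\langle\del^{M\times\R}_{\overline{\mathbf x}}\overline{\mathbf y},\mathbf n\rangle = -\hess u(\mathbf x,\mathbf y)$ together with $\langle\mathbf n,\mathbf n\rangle = \langle\del u,\del u\rangle - 1 = -(1+\langle\del u,\del u\rangle)$.
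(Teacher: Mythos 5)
Your approach is genuinely different from the paper's: the paper takes a geodesic $\gamma=(\alpha,u\circ\alpha)$ of $\Gamma(u)$, uses the fact that $\gamma''$ must be normal to $\Gamma(u)$ to derive $\alpha''+(u\circ\alpha)''(\del u)_{\alpha}= \mathbf 0$, solves for $(u\circ\alpha)''=\hess f(\gamma',\gamma')$, and then polarizes; you instead decompose $\del^{M\times\R}_{\overline{\mathbf x}}\overline{\mathbf y}$ orthogonally via the Gauss formula and work with arbitrary tangent vectors directly. Your route is perfectly viable and arguably cleaner (no polarization step, and it isolates the single identity $\langle\del^{M\times\R}_{\overline{\mathbf x}}\overline{\mathbf y},\mathbf n\rangle=-\hess u(\mathbf x,\mathbf y)$ as the whole content), but as written it contains a sign error that you need to repair.

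The error is in the metric convention on the second factor. The product $M\times\R$ here carries $g_M+dt^2$, with the $\R$ factor \emph{Riemannian} --- this is what Lemma \ref{lem:strong-causal-subman} assumes ($H$ Riemannian), and it is exactly what makes ``$\Gamma(u)$ timelike $\Leftrightarrow\langle\del u,\del u\rangle>-1$'' true: the normal is $\mathbf n=(\del u,-1)$ and $\langle\mathbf n,\mathbf n\rangle=\langle\del u,\del u\rangle+1$, which is positive (spacelike normal) precisely under the Lorentzian-function hypothesis. With your stated convention $g_M-dt^2$ the ambient space has two negative directions and is no longer Lorentzian, and your closing identity $\langle\mathbf n,\mathbf n\rangle=\langle\del u,\del u\rangle-1=-(1+\langle\del u,\del u\rangle)$ is algebraically false (the two sides agree only when $\langle\del u,\del u\rangle=0$); it is this false step that lands you back on the correct denominator. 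With the corrected convention everything goes through: $\lambda=\langle\del^{M\times\R}_{\overline{\mathbf x}}\overline{\mathbf y},\mathbf n\rangle/\langle\mathbf n,\mathbf n\rangle=-\hess u(\mathbf x,\mathbf y)/(1+\langle\del u,\del u\rangle)$, and writing $\tilde u=u\circ\mathrm{pr}_M$ one gets
\begin{equation*}
\hess f(\overline{\mathbf x},\overline{\mathbf y})=\hess^{M\times\R}\tilde u(\overline{\mathbf x},\overline{\mathbf y})+\lambda\,(\mathbf n\,\tilde u)=\hess u(\mathbf x,\mathbf y)-\frac{\hess u(\mathbf x,\mathbf y)\,\langle\del u,\del u\rangle}{1+\langle\del u,\del u\rangle}=\frac{\hess u(\mathbf x,\mathbf y)}{1+\langle\del u,\del u\rangle},
\end{equation*}
using $\mathbf n\,\tilde u=\langle\del u,\del u\rangle$. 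Do carry out this last cancellation explicitly --- you correctly flag that $\mathbf n f$ does not vanish, but your prose only gestures at why the resulting term is harmless, when in fact it contributes the factor $\langle\del u,\del u\rangle/(1+\langle\del u,\del u\rangle)$ that converts the ambient Hessian into the stated formula.
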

 
\begin{proof} 
Suppose $ \gamma(t) = (\alpha(t),u(\alpha(t))) $ is a geodesic of $\Gamma(u)$. Then the second covariant derivatives satisfy
$$
\gamma''(t) = \alpha''(t) + (u \circ \alpha)''(t)\, \partial _y|_{(u \circ \alpha) (t)}
 $$ 
 where $ \partial_y $ is the standard coordinate vector field on the second factor of $ M \times \R $.
 
 Any vector field $Y$ on $\Gamma(u)$ can be written as $ Y = X + (Xu)\,\partial_y = X + \langle \del u, X \rangle\,\partial_y $ where $ X $ is a vector field on $M$. In order for $\gamma$ to be a geodesic, $ \gamma''(t) $ must be orthogonal to $\Gamma(u)$ in $ M \times \R $. Thus 
 $$
\langle \gamma''(t) , Y_{\gamma(t)} \rangle = \langle \alpha''(t), X_{\alpha(t)} \rangle + (u \circ \alpha)''(t)\langle (\del u)_{\alpha(t)}, X_{\alpha(t)} \rangle 
$$ $$
= \langle \alpha''(t) + (u \circ \alpha)''(t) (\del u)_{\alpha(t)}, X_{\alpha(t)} \rangle = 0 
 $$ 
 for any vector field $X$ on $M$, so \begin{equation}\label{geo-in-graph}
 \alpha''(t) + (u \circ \alpha)''(t) (\del u)_{\alpha(t)} = \mathbf 0. 
\end{equation} 
 
Therefore $$ (u \circ \alpha)''(t) = \hess u( \alpha'(t), \alpha'(t) ) + \langle (\del u)_{\alpha(t)}, \alpha''(t) \rangle 
$$ $$= \hess u( \alpha'(t), \alpha'(t) ) - (u \circ \alpha)''(t) \langle (\del u)_{\alpha(t)}, (\del u)_{\alpha(t)} \rangle .
$$ Moreover, $$ \hess f( \gamma'(t), \gamma'(t) ) = (f \circ \gamma)''(t) = (u \circ \alpha)''(t) = \frac{\hess u( \alpha'(t), \alpha'(t) )}{ 1 + \langle (\del u)_{\alpha(t)}, (\del u)_{\alpha(t)} \rangle }.$$ 
Since this holds for any geodesic and $ \alpha'(t) $ is the projection of $ \gamma'(t) $ to $T_{\alpha(t)}M$, we conclude that for any tangent vector $\overline{\mathbf x} \in T_{(p, u(p))} \Gamma(u)$, $$ \hess f \left( \overline{\mathbf x}, \overline{\mathbf x} \right) = \frac{ \hess u(\mathbf x,\mathbf x) }{1 + \langle (\del u)_p, (\del u)_p \rangle } $$ where $ \mathbf x $ is the projection of $\overline{\mathbf x}$ onto $T_p M$. Equation (\ref{lem:hess-of-lift}) follows since symmetric bilinear forms on vector spaces are determined by their corresponding quadratic forms. 
\end{proof}

\begin{lem}\label{lem:strong-causal-subman} 

If $M$ is a strongly causal space-time and $H$ is a Riemannian manifold, then any 
immersed timelike submanifold $E$ of $M \times H$ is a strongly causal space-time
in the induced Lorentzian metric. 
\end{lem}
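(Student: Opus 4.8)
The plan is to deduce strong causality of $E$ from a single length estimate for causal curves, obtained by projecting to the strongly causal factor $M$. First I would record the structures. On $M\times H$ with the product metric $g_M\oplus g_H$, any nonzero causal vector $(\mathbf v,\mathbf w)$ satisfies $g_M(\mathbf v,\mathbf v)\le -g_H(\mathbf w,\mathbf w)\le 0$ with $\mathbf v\neq\mathbf 0$ (since $\mathbf v=\mathbf 0$ forces $\mathbf w=\mathbf 0$), so $\mathbf v$ is a nonzero causal vector of $M$; declaring $(\mathbf v,\mathbf w)$ future-pointing exactly when $\mathbf v$ is future-pointing in $M$ is a continuous choice, making $M\times H$ a space-time. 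This time orientation restricts to the timelike submanifold $E$ (whose induced metric is Lorentzian by hypothesis), so $E$ is a space-time and it remains only to verify strong causality. For that I would use the standard equivalent formulation: a space-time is strongly causal at $q$ iff every neighborhood $U$ of $q$ contains a neighborhood $V$ of $q$ such that every causal curve with both endpoints in $V$ lies in $U$ (see e.g.\ \cite{bee}, \cite{oneill}). Note that the inclusion $E\hookrightarrow M\times H$ and the projection $\pi_M\colon M\times H\to M$ each send future-pointing causal curves to future-pointing causal curves, hence so does the composite $\Phi:=\pi_M|_E\colon E\to M$.

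The crucial step is the length estimate. I would fix an auxiliary Riemannian metric on $M$ and let $g_{\mathrm{Riem}}$ be the pullback to $E$ of its product with $g_H$; then for any curve $\gamma$ in $E$, $\length_{g_{\mathrm{Riem}}}(\gamma)$ is at most the auxiliary $M$-length of $\Phi\gamma$ plus the $g_H$-length of the $H$-component of the image of $\gamma$ in $M\times H$. Given $p\in M$, choose a relatively compact neighborhood $N_p$ carrying a local time function $\tau$, so that $d\tau(\mathbf v)>0$ for every future-pointing causal $\mathbf v$ at a point of $N_p$ (such $\tau$ exists near any point). For open $W$ with $\overline W\subseteq N_p$, the future-pointing causal vectors at points of $\overline W$ on which $d\tau=1$ form a compact set, so $-g_M(\mathbf v,\mathbf v)$ and the auxiliary square-norm of $\mathbf v$ are both bounded there by some $c=c(W)$, hence (by homogeneity) both are $\le c\,(d\tau(\mathbf v))^2$ for all future-pointing causal $\mathbf v$ at points of $\overline W$. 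Consequently, if $\gamma$ is a future-pointing causal curve of $E$ whose projection $\Phi\gamma$ stays in $W$, then, applying the causal inequality $g_H(\cdot,\cdot)\le -g_M((\Phi\gamma)',(\Phi\gamma)')$ to the $H$-component and bounding the auxiliary norm of $(\Phi\gamma)'$ by $\sqrt c\,d\tau((\Phi\gamma)')$, integrating $d\tau((\Phi\gamma)'(t))=\tfrac{d}{dt}(\tau\circ\Phi\gamma)(t)$ yields $\length_{g_{\mathrm{Riem}}}(\gamma)\le 2\sqrt c\,\bigl(\sup_{\overline W}\tau-\inf_{\overline W}\tau\bigr)=:D(W)$. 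As $W$ shrinks to $p$, $c(W)$ stays bounded while $\sup_{\overline W}\tau-\inf_{\overline W}\tau\to 0$, so $D(W)\to 0$.

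To conclude, given $q\in E$ and a neighborhood $U$ of $q$, set $p=\Phi(q)$ and $\delta=d_{g_{\mathrm{Riem}}}(q,\,E\setminus U)>0$ (if $U=E$ there is nothing to prove). Using strong causality of $M$, pick a causally convex neighborhood $W$ of $p$ in $M$ with $\overline W$ compact in $N_p$ and $D(W)<\delta/2$, and put $V=\Phi^{-1}(W)\cap B_{g_{\mathrm{Riem}}}(q,\delta/2)$, an open neighborhood of $q$ contained in $U$. If $\gamma$ is a causal curve of $E$ with both endpoints in $V$, then $\Phi\gamma$ is a causal curve of $M$ with both endpoints in the causally convex set $W$, so $\Phi\gamma$ stays in $W$; hence $\length_{g_{\mathrm{Riem}}}(\gamma)\le D(W)<\delta/2$, and since one endpoint of $\gamma$ lies within $g_{\mathrm{Riem}}$-distance $\delta/2$ of $q$, all of $\gamma$ lies within distance $\delta$ of $q$, hence in $U$. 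By the formulation of strong causality recalled above, $E$ is strongly causal.

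The hard part will be the estimate of the middle paragraph: controlling how far a causal curve of $E$ can travel in the directions transverse to the $M$-factor. A priori a causal curve remaining near $q$ in $E$ could have an $M$-projection — and therefore, via the causal inequality, an $H$-component and an intrinsic extent — of unbounded size; it is precisely the confinement of that projection to a causally convex neighborhood in $M$, which is where strong causality of $M$ (rather than a weaker causality condition) is used, together with the positive-definiteness of $g_H$, that rules this out.
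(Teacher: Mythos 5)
Your proof is correct, but it takes a genuinely different route from the paper's. The paper first invokes the cited results of Beem--Ehrlich--Easley to conclude that the product $M\times H$ is itself a strongly causal space-time, and then runs a short, purely topological argument: if $E$ failed to be strongly causal, a future-pointing curve meeting a small slice-neighborhood $U'$ of $E$ in a disconnected set would also meet the ambient neighborhood $\tilde U$ of $M\times H$ in a disconnected set, contradicting strong causality of the product. You bypass the product step entirely and work quantitatively: after setting up the time orientation via the (necessarily nonzero, causal) $M$-component of causal vectors, you confine the $M$-projection of a causal curve of $E$ to a causally convex neighborhood $W$ of $M$ and then bound the \emph{total auxiliary Riemannian length} of the curve in $E$ by a quantity $D(W)\to 0$, using a local time function, compactness of the normalized causal cone over $\overline W$, and the product causal inequality $g_H(\mathbf w,\mathbf w)\le -g_M(\mathbf v,\mathbf v)$. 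This is where positive-definiteness of $g_H$ and strong causality (rather than mere causality) of $M$ visibly enter, and the estimate is sound. Your argument buys self-containedness (no appeal to strong causality of products), makes the mechanism explicit, and, being entirely intrinsic to $E$, handles immersed (not just embedded) submanifolds cleanly; its cost is that it relies on the standard equivalence between the paper's definition of strong causality (arbitrarily small causally convex neighborhoods) and the ``every causal curve with endpoints in $V$ stays in $U$'' formulation --- the direction you need (your formulation implies the existence of small causally convex neighborhoods) is the nontrivial one, but it is a textbook fact and citing it is legitimate. The paper's proof is shorter but leans on external results and is somewhat terser about why a curve leaving and re-entering the slice $U'$ must also leave $\tilde U$.
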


\begin{proof}
By \cite[Lemma 3.54 and Proposition 3.62]{bee}, $M \times H$ is a strongly causal space-time since $M$ is a strongly causal space-time.
Since the timelike tangent vectors to $E$ form the intersection of $TE$ with the timelike vectors in the pull-back of $T(M \times H)$, $E$ inherits a time orientation from $M \times H$. 

Suppose $E$ is not strongly causal.
For $p\in E$, every sufficiently small neighborhood $\tilde U$ of $p$ in $M \times H$ lies in a coordinate neighborhood whose intersection with $E$ is a coordinate slice.
Moreover, there is a piecewise smooth curve $\gamma$ with future-pointing tangent vectors in the component $U'$ of $E\cap\tilde U$ containing $p$, such that $\gamma$ intersects $U'$ in a disconnected set. But then $\gamma$ intersects $\tilde U$ in a disconnected set. This contradiction shows $E$ inherits strong causality from $M \times H$. 
 \end{proof}

 \emph{Proof of Corollary  \ref{cor:lorentz-graph}}. 
Since the graph $\Gamma(u)$ is a timelike submanifold of $M \times \R$, then $\Gamma(u)$ is a strongly causal space-time by Lemma \ref{lem:strong-causal-subman}. Therefore $\Gamma(u)$ is null-disprisoning \cite[Proposition 3.13]{bee}.

Let $f : \Gamma(u) \to \R$ be the lift of $u : M \to \R$ to the graph $ \Gamma(u) $. Since $u$ is proper and nonnegative, so is $f$. In addition, $f$ is strictly convex by Lemma \ref{lem:convex-lift} since $u$ is strictly convex. By Theorem 
\ref{thm:gordon-semi}, $\,\Gamma(u)$ is geodesically connected.
\qed

%%%%%%%%%%%%%%%%
%%%%%%%%%%%%%%%%%
%%%%%%%%%%%%%%%%%
%%%%%%%%%%%%%%%%%%%%%%%%%%%%%%%%%%%%%%%%%%%%%%%%%%%%%%%%%%%%%%%%%%%%%%%%%%%%%%%%%%%%%

\section{Dual cones in semi-Euclidean space}
\label{sec:dualcones}

In order to generate proper convex functions on timelike convex hypersurfaces in Minkowski space, we need to extend the notion of dual cones in Euclidean space to semi-Euclidean and Minkowski space. In Section \ref{sec:hypersurface} we apply the theory to convex hypersurfaces.

\begin{definition}
\label{def_dualcone} Let $K$ be a subset of $\SEu $. The \emph{dual cone} $K^*$ of $K$ in $\SEu $ is defined by $K^* =\{ \x^* \in \SEu : \langle \x^*, \x \rangle \geq 0 \text{ for all } \x \in K \}.$
\end{definition}

\begin{prop} 
\label{dual_props} Let $K, K_1 \subseteq \SEu$.
 \begin{enumerate}
\item $ K^* $ is a closed convex cone.
\item If $ K_1 \subseteq K $, then $ K_1^{*} \supseteq K^* $.
\item $ (-K)^* = - K^* $.
\item If $K$ has nonempty interior relative to $\SEu$, then $K^*$ is 
pointed, i.e. $K^*$ contains no line.
\item $ K^{**} $ is the closure of the smallest convex cone containing $K$.
\item Let $ \partial K $ denote the boundary of $K$ relative to $\SEu$. If $K$ is a convex cone, then $ \x \in \partial K $ if and only if $ \langle \x, \x^* \rangle = 0 $ for some $ \x^* \in K^* $.
\end{enumerate} 
\end{prop}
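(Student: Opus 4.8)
I would treat (1)--(4) directly from the definition and obtain (5) and (6) from the classical separation and supporting-hyperplane theorems of convexity, the one wrinkle being that linear functionals must be represented through the nondegenerate---but indefinite---form $\langle\cdot,\cdot\rangle$. For (1), $K^*=\bigcap_{\x\in K}\{\x^*:\langle\x^*,\x\rangle\ge 0\}$ is an intersection of closed half-spaces bounded by hyperplanes through the origin, hence a closed convex cone. Part (2) is inclusion-reversal: $\x^*\in K^*$ forces $\langle\x^*,\x\rangle\ge 0$ for all $\x\in K\supseteq K_1$. Part (3) is the substitution $\x\mapsto-\x$: $\x^*\in(-K)^*\iff\langle\x^*,-\y\rangle\ge 0$ for all $\y\in K\iff-\x^*\in K^*\iff\x^*\in-K^*$. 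For (4), if $K^*$ were not pointed it would contain a line, hence (being a closed convex cone, by (1)) a line through the origin, i.e.\ $\pm\x^*\in K^*$ for some $\x^*\ne\mathbf 0$; then $\langle\x^*,\x\rangle=0$ for all $\x\in K$, so $\langle\x^*,\cdot\rangle$ vanishes on a spanning subset of $\SEu$ (as $K$ has nonempty interior), hence identically, forcing $\x^*=\mathbf 0$ by nondegeneracy---a contradiction.

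Part (5) is the heart of the matter. One inclusion is formal: $K\subseteq K^{**}$ from the definition, and $K^{**}$ is a closed convex cone by (1), so $\overline{\cone(K)}\subseteq K^{**}$. For the reverse, fix an auxiliary Euclidean inner product on $\R^{n+k}$ and let $\x\notin\overline{\cone(K)}$. By strict separation of the point $\x$ from the closed convex set $\overline{\cone(K)}$, there is a nonzero linear functional $\ell$ with $\ell(\x)<\inf_{\overline{\cone(K)}}\ell$; since $\overline{\cone(K)}$ is a cone, this infimum is necessarily finite (it exceeds $\ell(\x)$) and therefore equals $0$, so $\ell\ge 0$ on $\cone(K)$ while $\ell(\x)<0$. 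Writing $\ell=\langle\cdot,\x^*\rangle$ for the unique $\x^*$ supplied by nondegeneracy of $\langle\cdot,\cdot\rangle$, we get $\x^*\in(\cone(K))^*\subseteq K^*$ by (2), yet $\langle\x,\x^*\rangle=\ell(\x)<0$, so $\x\notin K^{**}$. Hence $K^{**}=\overline{\cone(K)}$.

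Part (6) is a localized version of the same argument; here I take $\x^*\ne\mathbf 0$ and may assume $K\ne\SEu$, so $\partial K\ne\emptyset$. If $\x\in\partial K$, a supporting hyperplane of (the closure of) $K$ at $\x$ gives a nonzero $\ell$ with $\ell\le\ell(\x)$ on $K$; since $K$ is a cone, $\sup_K\ell=\ell(\x)=0$, and representing $-\ell=\langle\cdot,\x^*\rangle$ yields $\x^*\in K^*\setminus\{\mathbf 0\}$ with $\langle\x,\x^*\rangle=0$. Conversely, suppose $\x^*\in K^*$, $\x^*\ne\mathbf 0$, $\langle\x,\x^*\rangle=0$; if $\x$ were an interior point of $K$, pick $\w$ with $\langle\w,\x^*\rangle<0$ (possible by nondegeneracy), so $\x+t\w\in K$ for small $t>0$ while $\langle\x+t\w,\x^*\rangle=t\langle\w,\x^*\rangle<0$, contradicting $\x^*\in K^*$; hence $\x\in K\setminus\inte K=\partial K$.

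The only genuine obstacle is keeping the two inner products apart: separation and supporting-hyperplane statements are metric facts requiring an honest norm, so one proves them with the auxiliary Euclidean structure and then transports the resulting functional back to a vector via $\langle\cdot,\cdot\rangle$---it is nondegeneracy, not positive-definiteness, that makes this representation possible. Equivalently and more slickly, one can push the whole proposition onto the Euclidean dual cone through the linear involution $J=\mathrm{diag}(I_n,-I_k)$, note that $K^*=J(K^{\sharp})$ where $K^{\sharp}$ is the ordinary Euclidean dual cone, and quote the classical bipolar theorem and boundary characterization verbatim, since $J$ preserves convexity, cones, closures, interiors, and the property of containing no line.
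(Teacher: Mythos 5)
Your proposal is correct and follows essentially the same route as the paper: the paper disposes of (1)--(5) by citing them as standard facts about the dual cone in the dual space and transporting them to $\SEu$ via the identification $V\cong V^*$ supplied by the nondegenerate form (exactly the $J=\mathrm{diag}(I_n,-I_k)$ trick you mention at the end), and its proof of (6) is the same two-sided argument you give --- a supporting hyperplane at a boundary point, and a small perturbation $\x_0+\mathbf u$ at an interior point. The only difference is that you write out the standard proofs of (1)--(5) (separation for the bipolar statement, etc.) where the paper simply quotes them, and both treatments share the same implicit conventions in (6) that $\x\in K$ and $\x^*\neq\mathbf 0$.
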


\begin{proof}
\emph{(1)--(5).} Given a subset $ K $ of a finite dimensional vector space $V$, one can define the dual cone in the dual space as the linear functionals $\ell$ on $V$ with $\ell(\mathbf x) \geq 0$ for $\mathbf x \in K$. These properties are well-known properties of this dual cone.

Equipping $V$ with a nondegenerate symmetric bilinear form $ \langle \cdot, \cdot \rangle $ identifies $V$ with its dual space. All linear functionals can be represented as $ \ell_{\w}( \x ) = \langle \w, \x \rangle $ for $ \w, \x \in V $. In this representation, the dual cone of a subset $ K \subseteq V $ is $ K^* = \{ \x^* \in V : \langle \x^*, \x \rangle \geq 0 \} $. Thus the same properties are carried over to the dual cone defined using the inner product, in particular if we take the semi-Euclidean inner product on $\SEu$.

\emph{(6).} Suppose $ \x_0 \in K - \partial K$. Then $ \langle \x_0 + \mathbf u, \x^* \rangle \geq 0 $ for all $ \mathbf u \in U $ for some neighborhood $ U $ of $0$ in $\SEu$ and for any $ \x^* \in K^* $. For any $ \x^* \in K^* $, 
choose $ \mathbf u \in U $, $ \langle \mathbf u, \x^* \rangle < 0 $. Then $ \langle \x_0, \x^* \rangle > 0 $ for any $ \x^* \in K^* $.

On the other hand, suppose $ \x_0 \in \partial K $. Take $ \x^* \in \SEu $ to be a nonzero normal vector to a supporting hyperplane of $K$ at $\x_0 \in K$ with $ \langle \x - \x_0, \x^* \rangle \geq 0 $ for all $ \x \in K $. Letting $ \x = \lambda \x_0 $ for any $ \lambda >0$, then $ (\lambda - 1) \langle \x_0, \x^* \rangle \geq 0 $. Letting $ \lambda > 1 $ and $ \lambda < 1 $ we obtain $ \langle \x_0, \x^* \rangle = 0 $. The claim follows.
\end{proof}

\begin{prop}
\label{future_past_dual} Let $ \Future $ and $\Past = -\Future$ denote the closed future and past cones in $\M$, respectively. Then $\Future^* = \Past$ and $ \Past^* = \Future$.
\end{prop}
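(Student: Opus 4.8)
\emph{Proof proposal.} The plan is to compute $\Future^*$ directly in the standard coordinates of $\M=\E^{n+1}_1$, in which $\langle\mathbf x,\mathbf x\rangle = x_1^2+\cdots+x_n^2-x_{n+1}^2$ and, after fixing the time orientation, $\Future=\{\mathbf x: x_1^2+\cdots+x_n^2\le x_{n+1}^2,\ x_{n+1}\ge 0\}$ and $\Past=-\Future$. Once $\Future^*=\Past$ is established, the statement $\Past^*=\Future$ follows immediately from Proposition \ref{dual_props}(3), since $\Past^*=(-\Future)^*=-\Future^*=-\Past=\Future$.

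First I would prove the inclusion $\Past\subseteq\Future^*$. Writing $\mathbf x^*=-\mathbf y$ with $\mathbf y\in\Future$, this amounts to the reverse Cauchy--Schwarz inequality $\langle\mathbf y,\mathbf x\rangle\le 0$ for any two future-pointing causal vectors $\mathbf y,\mathbf x$. Indeed, by the ordinary Cauchy--Schwarz inequality in $\R^n$, $|y_1x_1+\cdots+y_nx_n|\le(\sum_i y_i^2)^{1/2}(\sum_i x_i^2)^{1/2}\le y_{n+1}x_{n+1}$, using $\sum_i y_i^2\le y_{n+1}^2$, $\sum_i x_i^2\le x_{n+1}^2$ and $y_{n+1},x_{n+1}\ge 0$; hence $\langle\mathbf y,\mathbf x\rangle=(y_1x_1+\cdots+y_nx_n)-y_{n+1}x_{n+1}\le 0$. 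The zero vector is handled trivially.

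Next, for the reverse inclusion $\Future^*\subseteq\Past$, I would test an element $\mathbf x^*$ of $\Future^*$ against the future null generators $\mathbf x_u=(u,1)$, $u\in S^{n-1}\subset\R^n$. The condition $\langle\mathbf x^*,\mathbf x_u\rangle\ge 0$ reads $\sum_{i=1}^n x^*_i u_i\ge x^*_{n+1}$ for all unit $u$; minimizing the left-hand side over $u$ gives $-\bigl(\sum_{i=1}^n (x^*_i)^2\bigr)^{1/2}\ge x^*_{n+1}$. Hence $x^*_{n+1}\le 0$ and $(x^*_{n+1})^2\ge\sum_{i=1}^n(x^*_i)^2$, i.e. $\langle\mathbf x^*,\mathbf x^*\rangle\le 0$ and $\mathbf x^*$ is past-pointing, so $\mathbf x^*\in\Past$. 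Combining the two inclusions gives $\Future^*=\Past$, and then $\Past^*=\Future$ as above.

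I do not expect a genuine obstacle; the only thing to watch is the boundary behavior, namely null vectors and the apices of the cones. This is why I would phrase the first step as the reverse Cauchy--Schwarz inequality, valid for all causal vectors including null ones, rather than restricting to the timelike interior; and in the second step it is important that the null rays $\mathbf x_u$ are enough test vectors, which holds because they are precisely the extreme rays of the closed convex cone $\Future$ (alternatively one could invoke $\Future^{**}=\Future$ from Proposition \ref{dual_props}(5) together with $\Future$ being a closed convex cone with nonempty interior).
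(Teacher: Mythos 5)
Your proposal is correct and is essentially the same argument as the paper's, which simply asserts that ``a simple calculation shows that $\mathbf u\in\Future$ if and only if $\langle\mathbf u,\w\rangle\ge 0$ for all $\w\in\Past$'' and leaves the computation to the reader. You have merely written out that calculation explicitly (reverse Cauchy--Schwarz for one inclusion, testing against the null generators for the other), with correct attention to the boundary cases.
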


\begin{proof}
A simple calculation shows that $ \mathbf u \in \Future $ if and only if $ \langle \mathbf u, \w \rangle \geq 0$ for all $ \w \in \Past $.
\end{proof}

\begin{lem}
\label{lem:max_cone}
Let $ K \subseteq \M $ be a spacelike convex cone. Then either $K$ is contained in a subspace of $\M$ of dimension $\leq n$ or $K$ has nonempty interior relative to $\M$ and $K^*$ contains a pair of linearly independent null vectors, $ \mathbf u \in \Future $ and $\mathbf u' \in \Past $.
\end{lem}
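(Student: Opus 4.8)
The plan is to analyze the cone $K$ according to its dimension. First I would observe that if $K$ spans all of $\M$, then $K$ has nonempty interior relative to $\M$; otherwise $K$ lies in a proper subspace, which has dimension $\le n$, and we are in the first alternative. So assume $K$ spans $\M$ and has nonempty interior.

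Next I would use Proposition \ref{dual_props}(4): since $K$ has nonempty interior relative to $\M$, the dual cone $K^*$ is pointed, i.e. contains no line. I would also use Proposition \ref{dual_props}(6) together with the hypothesis that $K$ is spacelike. The idea is that $K$, being a spacelike convex cone with nonempty interior, must ``open up'' toward the light cone; more precisely, since every nonzero vector of $K$ is spacelike, $K$ cannot contain any causal vector, so by the structure of the Minkowski inner product, $K^*$ must reach into both the future and past cones. To make this precise I would argue as follows: pick a nonzero $\x^* \in K^*$ on the boundary of $K^*$; by Proposition \ref{dual_props}(6) applied to $K^*$ (whose double dual $K^{**}$ is the closure of $K$ by (5)), there is $\x \in \overline{K}$ with $\langle \x, \x^*\rangle = 0$. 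Since $\x$ is spacelike (or a limit of spacelike vectors, hence causal or spacelike) and $\x^*$ is orthogonal to it, $\x^*$ itself cannot be spacelike — a spacelike vector's orthogonal complement meets the closed future/past cones only at $0$ would be false; rather I should recall that in Minkowski space the orthogonal complement of a spacelike vector contains timelike vectors, while the orthogonal complement of a timelike vector is spacelike, and the orthogonal complement of a null vector is degenerate containing only that null direction as its causal part.

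The key computation is therefore: $K^*$ cannot consist entirely of spacelike vectors. Suppose it did; then $\overline{K} = K^{**}$ would be the dual of a spacelike cone, and iterating the orthogonality argument, $K^{**}$ would contain causal vectors, contradicting that $K$ is spacelike (here I use that $K^{**} = \overline{K}$ and a limit of spacelike vectors in a closed cone, if it is causal, would have to be null and nonzero, but then $K$ contains spacelike vectors arbitrarily close to a null direction, and their inner products with the null vector can be made negative — this needs care). A cleaner route: since $K^*$ is pointed and $\M$ is Lorentzian, if $K^*$ contained only spacelike vectors together with $0$, then $K^*$ would lie in an open spacelike cone, whose dual is a closed cone with nonempty interior containing the full light cone, hence containing timelike and null vectors, contradicting $\overline{K} = K^{**}$ being spacelike. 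So $K^*$ contains a causal vector $\mathbf w \ne 0$. If $\mathbf w$ is timelike, a small perturbation argument (or taking $\mathbf w$ and reflecting) shows $K^*$, being a cone with nonempty interior complementary structure, actually contains null vectors on the boundary of a surrounding cone in both time cones. If $\mathbf w$ is null, I then need a second independent null vector: I would argue that $K^*$, being convex and containing a null vector $\mathbf u \in \Future$ but no line, and having $\overline{K}= K^{**}$ spacelike (so $K^{**}$ contains no timelike vector), forces $K^*$ to be ``large'' enough to reach the past cone as well, producing $\mathbf u' \in \Past$ independent of $\mathbf u$.

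I expect the main obstacle to be the final step: upgrading ``$K^*$ contains a causal vector'' to ``$K^*$ contains linearly independent null vectors $\mathbf u \in \Future$, $\mathbf u' \in \Past$.'' The clean way is probably to use Proposition \ref{future_past_dual}: since $K$ is spacelike, $K$ contains no nonzero vector of $\Future \cup \Past$, so in particular $K \cap \Future = \{0\}$ and $K \cap \Past = \{0\}$; a separating-hyperplane / duality argument should then show $\Past \cap K^* \ne \{0\}$ and $\Future \cap K^* \ne \{0\}$ (roughly, $K^{**} = \overline K$ is disjoint from the open future cone, so $(\text{open future cone})^* = \Past$ is not ``used up'' — more precisely, $\overline{K} \subseteq (\Future)^c \cup \{0\}$ forces $K^* \supseteq$ something meeting $\Past$). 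Picking null boundary vectors in each of $K^* \cap \Future$ and $K^* \cap \Past$ (possible since $K^*$ is pointed, so these intersections, being pointed closed cones inside $\Future$ resp. $\Past$, have null vectors on their relative boundary unless they are a single ray, in which case that ray is already null or timelike — if timelike, perturb), and checking linear independence (automatic since one is future-pointing and the other past-pointing and neither is zero, unless they are antipodal, i.e. $\mathbf u' = -\lambda\mathbf u$, which would put a line's worth of directions... actually $\mathbf u$ and $-\mathbf u$ being both in $K^*$ contradicts pointedness), completes the argument. I would need to handle the degenerate sub-case where $K^* \cap \Future$ is a single timelike ray carefully, showing it still cannot happen given $K$ spans $\M$.
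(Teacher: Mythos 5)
Your opening moves match the paper's: the dichotomy according to whether $K$ spans $\M$, and the separating-hyperplane argument combined with $\Future^* = \Past$ (Proposition \ref{future_past_dual}) to produce nonzero vectors $\w \in \Future \cap K^*$ and $\w' \in \Past \cap K^*$. The genuine gap is in your final step, where you propose to extract the required null vectors as ``null boundary vectors'' of the cones $K^* \cap \Future$ and $K^* \cap \Past$. A pointed closed convex cone contained in $\Future$ need not have any null vector on its relative boundary: in $\Mink$ the cone generated by the two timelike vectors $(\pm\tfrac{1}{10},0,1)$ is two-dimensional, pointed, and consists entirely of timelike vectors (plus the origin), its relative boundary being two timelike rays. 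Your fallback ``if timelike, perturb'' does not repair this, because a perturbation of a vector of $K^*$ need not remain in $K^*$ (for instance if $K^* \cap \Future$ is exactly one timelike ray). So as written you have not produced any null vector in $K^*$, let alone two linearly independent ones of opposite time orientation; the preceding paragraphs of your proposal (orthogonal complements, iterated double duals) do not supply this either.

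The paper closes exactly this gap with a one-line convexity argument you never quite reach. If $\w$ and $\w'$ are proportional, then $K^*$ contains a line, so every $\x \in K$ satisfies $\langle \x,\w\rangle = 0$ and $K$ lies in the hyperplane $\w^{\perp}$, which is the first alternative. Otherwise $\w$ and $\w'$ are linearly independent, and the segment from $\w'$ to $\w$ lies in the convex cone $K^*$, avoids the origin, and joins $\Past$ to $\Future$; hence it must leave $\Past$ through a nonzero past-pointing null vector $\mathbf u'$ and enter $\Future$ through a nonzero future-pointing null vector $\mathbf u$. Two distinct points of a segment with linearly independent endpoints are themselves linearly independent, so $\mathbf u,\mathbf u'$ are as required. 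Replacing your boundary-and-perturbation step with this segment argument completes the proof; your separate discussion of antipodal null vectors and of the single-timelike-ray subcase then becomes unnecessary.
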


\begin{proof}
Since $K$ and $\Future$ are convex and and intersect only at the origin, we can find a separating hyperplane $ H_{\w} = \{ \x \in \M : \langle \x, \w \rangle = 0 \} $, $ \w \neq \mathbf{0} $, such that $ \w \in K^*$ and $ -\w \in \Future^* = \Past$, i.e. $\w \in \Future $. Thus $ \w \in \Future \cap K^* $. Similarly, we can find a nonzero vector $ \w' \in \Past \cap K^* $.

If $ \w $ and $ \w' $ are scalar multiples of one another, then $ K^* $ contains a line, and $ K $ lies in a subspace of dimension $\le n$. Otherwise $ \w $ and $ \w' $ are linearly independent. Then the line segment between $ \w $ and $ \w' $ passes through a pair of linearly independent null vectors $ \mathbf u $ and $ \mathbf u' $, future-oriented and past-oriented respectively. Since $K^*$ is convex, $ \mathbf u, \mathbf u' \in K^* $. 
\end{proof}

\section{Convex hypersurfaces and geodesic connectedness} 
\label{sec:hypersurface}

In this section, we prove Theorem \ref{thm:main} on geodesic connectedness of a timelike convex hypersurface $M$. The method is by constructing a convex function on $M$.

First we show that $M$ is essentially the graph of a convex function over at least one of its tangent hyperplanes (Theorem \ref{thm:wu2}). 
Wu proved the analogous theorem for Euclidean convex hypersurfaces in \cite{wu}. In the Minkowski setting, the argument is somewhat more delicate (see Lemma \ref{lem:RN} and Example \ref{RN-counterexample}).

The proof depends on Lemma \ref{lem:RN} concerning
the normal and recession cones of $M$. We begin with a few lemmas on normal and recession cones of 
general
convex hypersurfaces in semi-Euclidean space. By a 
\emph{general convex hypersurface} we will mean the boundary of a convex body, not necessarily smooth and not necessarily connected. (The latter provision merely allows the possibility of two parallel hyperplanes).

Unless otherwise specified, ``interior'' and the symbol ``$\inte$'' will mean interior relative to the original ambient semi-Euclidean space.

\begin{definition}
\label{def_Recess} Let $M$ be a 
general
convex hypersurface of $\SEu$ bounding the convex body $B$ in $\SEu$. 
\begin{enumerate}
\item
The \emph{recession cone} $\Recess$ of $M$ consists of all vectors on any ray from $\mathbf 0$ in $\SEu$ that is the translate of a ray in $B$. 
\item
The \emph{normal cone} $\Normals$ of $M$ consists of all nonzero vectors $\w \in \SEu$ such that the halfspace $ \{ \x \in \SEu : \langle \x, \w \rangle \geq 0 \} $ is a translate of a 
supporting halfspace of $B$ at some $p \in M$, i.e. a halfspace that contains $B$ and whose boundary is tangent to $B$ at $p$.
\end{enumerate}
\end{definition}

\begin{definition}
Given a choice of orthonormal basis of $\SEu$, the \emph{associated Euclidean space} $ \E^{n+k} $ is obtained by making the basis Euclidean orthonormal.
\end{definition}
 
\begin{remark}\label{rem:associated-orthog}
$ \mathbf w = (w^1, ... , w^{n+k}) $ is orthogonal to $\mathbf w_0$ in $\SEu$ if and only if $ \mathbf w' = (w^1, ... , w^{n}, -w^{n+1}, ... , -w^{n+k}) $ is orthogonal to $\mathbf w_0$ in the associated Euclidean space.
\end{remark}

\begin{lem}\label{thm:wu1}
Let $M$ be a general convex hypersurface in $ \SEu $, and $\Normals $ be the normal cone of $M$. Then there exist a unique subspace $ V \subseteq \SEu $ and a unique open convex cone $K$ in $ V $ such that $ K \subseteq \Normals \subseteq \overline{K} $, i.e. the closure and the interior relative to $V$ of $\Normals$ are convex.
\end{lem}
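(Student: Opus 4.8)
The goal is to produce the subspace $V$ and the open convex cone $K \subseteq V$ with $K \subseteq \Normals \subseteq \overline{K}$, and to argue uniqueness. I would start from the observation that $\Normals$, the normal cone of the general convex hypersurface $M = \partial B$, is the union of $-1$ times the boundary of the dual cone of the recession cone of $B$, and more to the point is a convex cone in $\SEu$ minus the origin: if two supporting halfspaces have inward normals $\mathbf w_1, \mathbf w_2$, then for $\lambda \in [0,1]$ the functional $\langle \cdot, \lambda \mathbf w_1 + (1-\lambda)\mathbf w_2\rangle$ is still bounded below on $B$, so $\lambda \mathbf w_1 + (1-\lambda)\mathbf w_2 \in \Normals \cup \{\mathbf 0\}$. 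Hence $\Normals \cup \{\mathbf 0\}$ is a convex cone; call its closure $\overline{C}$. The natural candidate for $V$ is the linear span of $\Normals$, and the candidate for $K$ is the interior of $\overline{C}$ relative to $V$. With these definitions, $K \subseteq \overline{C}$ and $\overline{\Normals} = \overline{C}$ automatically, so the content is the two inclusions $K \subseteq \Normals$ and $\Normals \subseteq \overline{K}$.

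For $\Normals \subseteq \overline{K}$: a convex cone with nonempty interior relative to its span equals the closure of that interior, so $\overline{C} = \overline{K}$, and since $\Normals \subseteq \overline{C}$ we are done — provided $\overline{C}$ does have nonempty relative interior in $V = \mathrm{span}\,\Normals$, which holds because a convex set spanning $V$ has nonempty interior in $V$. For $K \subseteq \Normals$: take $\mathbf w \in K$, so $\mathbf w$ lies in the relative interior of $\overline{C}$. I want to show $\mathbf w$ is an actual inward normal of a genuine supporting halfspace at some point of $M$, i.e. that $\inf_{\mathbf x \in B}\langle \mathbf x, \mathbf w\rangle$ is finite and attained. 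Finiteness: since $\mathbf w$ is in the relative interior of $\overline{\Normals}$, write $\mathbf w$ as a convex combination (with positive weights) of points of $\Normals$ together with, if necessary, a point whose negative is also in the cone — and use that for each genuine normal the inf over $B$ is finite, while linear combinations of bounded-below functionals are bounded below. Attainment: this is where the interior hypothesis does real work; a supporting functional in the relative interior of the normal cone has a level set meeting $B$ in a bounded face, so the infimum is attained. I would make this precise by passing to the associated Euclidean space via Remark~\ref{rem:associated-orthog} and Lemma~\ref{lem:max_cone}, reducing the boundedness of the minimizing face to the statement that the recession cone of $B$ meets the hyperplane $\mathbf w^\perp$ only at $\mathbf 0$, which is exactly the condition that $\mathbf w$ lies strictly inside the dual of the recession cone.

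For uniqueness: if $(V_1, K_1)$ and $(V_2, K_2)$ both work, then $\overline{K_1} = \overline{\Normals} = \overline{K_2}$, so $K_1$ and $K_2$ have the same closure; an open convex set is the interior of its closure relative to its affine hull, and the affine hull (here, linear span) of the common closure is forced, so $V_1 = V_2$ and then $K_1 = K_2$. The main obstacle I anticipate is the attainment-of-infimum step — showing that an inward normal in the relative interior of $\Normals$ is supported at an actual point of $M$ rather than merely "at infinity." In the Euclidean case this is automatic from compactness of bounded slices, but because $B$ need not be compact one must genuinely use that the recession cone is controlled by the dual-cone picture of Section~\ref{sec:dualcones}; getting the semi-Euclidean bookkeeping right (which way the signs flip, per Remark~\ref{rem:associated-orthog}) is the delicate part, and is presumably why the authors isolated Lemmas~\ref{lem:max_cone} and~\ref{lem:RN} separately.
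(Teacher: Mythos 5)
Your opening structural claim --- that $\Normals\cup\{\mathbf 0\}$ is a convex cone --- is false, and it is exactly the point on which this lemma is delicate. Membership of $\mathbf w$ in $\Normals$ (Definition \ref{def_Recess}) requires more than that $\langle\cdot,\mathbf w\rangle$ be bounded below on the convex body $B$: it requires that the corresponding halfspace actually support $B$ at some point $p\in M$, i.e.\ that the infimum be \emph{attained} on the boundary. For unbounded $B$, a convex combination of two attained supporting functionals is bounded below but need not be attained, so the inference ``bounded below, hence in $\Normals\cup\{\mathbf 0\}$'' does not go through. This is not a slip of wording: the remark immediately following the lemma recalls Wu's example \cite{wu} of a smooth convex hypersurface in $\E^3$ whose normal cone is genuinely non-convex (and the sign-flip isomorphism transports it to $\SEu$). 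You correctly single out attainment as ``where the interior hypothesis does real work'' later in your outline, yet waive it here. Everything downstream that leans on ``$\overline{C}$ is the closure of the convex cone $\Normals\cup\{\mathbf 0\}$'' must be rebuilt with $\overline{C}$ taken to be the closed convex conical hull of $\Normals$; with that change $\Normals\subseteq\overline{K}$ is still immediate, and the real content becomes $K\subseteq\Normals$, as you say.

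For comparison, the paper does none of this from scratch: it passes to the associated Euclidean space (where, by Remark \ref{rem:associated-orthog}, inward normals correspond under the linear isomorphism flipping the last $k$ coordinates), quotes Wu's Theorem 1 on the spherical image of a Euclidean convex hypersurface, and converts spherical convex sets into cones by taking unions of rays. Your plan is in effect a re-proof of Wu's theorem. The later steps are broadly on track --- finiteness of the infimum for $\mathbf w$ in the relative interior via Carath\'eodory, attainment via triviality of the recession cone of the sublevel sets, and the uniqueness argument --- but the attainment step needs one further refinement: when $\Normals$ spans a proper subspace $V$, the body $B$ contains all lines in the directions of the (Euclidean) orthogonal complement of $V$, the minimizing face is then unbounded, and you must split off this lineality before any compactness argument applies. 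As written, the proof has a genuine gap at its first step.
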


\begin{proof}
Regard $M$ as a convex hypersurface in an associated Euclidean space $\E^{n+k}$, and let $ N : M \to S^{n+k-1} $ denote the Gauss map in $\E^{n+k}$. By Theorem 1 in \cite{wu}, there exist a unique totally geodesic sphere $ S^m \subseteq S^{n+k-1} $ and a unique open convex subset $U$ of $S^m$ such that $ U \subseteq N(M) \subseteq \overline{U} $.

For a set $W$ in a vector space $V$, we set $\text{ray} \, W = \{ \lambda \mathbf w : \mathbf w \in W, \lambda \in [0,\infty) \} $. In $ \E^{n+k} $, there is a one-to-one correspondence between open (closed) convex subsets of the unit sphere and open (closed) convex cones, obtained by identifying a point on the sphere with the open (closed) ray from the origin through that point. Thus there exist a unique subspace $ V = \text{ray} \, S^m $ in $\E^{n+k}$ and a unique open convex cone $K' = \text{ray} \, U - \{ \mathbf 0\} \subseteq V$ such that $ K'\subseteq \text{ray} \, N(M) \subseteq \overline{K'} $. 

Since $ \mathbf w = (w^1, ... , w^{n+k}) $ is an inward normal vector to $M$ at a point $p$ in $\SEu$ if and only if $ \mathbf w' = (w^1, ... , w^{n}, -w^{n+1}, ... , -w^{n+k}) $ is an inward normal vector to $M$ at $p$ in the associated Euclidean space $ \E^{n+k} $, we have a vector space isomorphism mapping the normal cone $ \Normals_{\text{Euc}} = \text{ray} \, N(M) - \{ \mathbf 0 \}$ in the associated Euclidean space to the normal cone $ \Normals $ in $ \SEu $. All convex sets are carried to convex sets and the theorem follows.
\end{proof}

\begin{remark}
By Lemma \ref{thm:wu1}, the normal cone of a general convex hypersurface has convex interior relative to the subspace $V$, and convex closure. In \cite{wu}, an example of a smooth convex hypersurface $M$ in $\E^3$ is described to show that the normal cone itself need not be convex. 

To construct an analogous example in $ \SEu$, consider an associated Euclidean space $ \E^{n+k} $ and a convex hypersurface $M$ in $ \E^{n+k} $ whose normal cone $\Normals_{\text{Euc}}$ is not convex. Since there is a vector space isomorphism $\E^{n+k} \to \SEu$ mapping the normal cone in the associated Euclidean space to the normal cone in $\SEu$, the normal cone $\Normals$ in $\SEu$ will not be convex.
\end{remark}

\begin{lem}
\label{lem:dual}
Let $M$ be a general convex hypersurface of $\SEu$ with recession cone $\Recess$ and normal cone $\Normals$. Then $\Normals^* = \Recess$ and $ \Recess^* = \overline{\Normals} $.
\end{lem}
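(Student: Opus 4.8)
The plan is to establish the first identity $\Normals^* = \Recess$ directly, by two inclusions, and then obtain $\Recess^* = \overline{\Normals}$ as a formal consequence using Proposition \ref{dual_props}. Throughout, $B$ denotes the convex body with $M = \partial B$.

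\emph{Step 1: $\Recess \subseteq \Normals^*$.} Let $\mathbf v \in \Recess$, so there is $b \in B$ with $b + t\mathbf v \in B$ for every $t \ge 0$, and let $\mathbf w \in \Normals$, so there is $p \in M$ with $\langle \x - p, \mathbf w \rangle \ge 0$ for all $\x \in B$. Applying the latter to $\x = b + t\mathbf v$ gives $\langle b - p, \mathbf w\rangle + t\,\langle \mathbf v, \mathbf w\rangle \ge 0$ for all $t \ge 0$; letting $t \to \infty$ forces $\langle \mathbf v, \mathbf w\rangle \ge 0$. Hence $\mathbf v \in \Normals^*$.

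\emph{Step 2: $\Normals^* \subseteq \Recess$.} I would argue contrapositively: if $\mathbf v \notin \Recess$, I produce $\mathbf w \in \Normals$ with $\langle \mathbf v, \mathbf w\rangle < 0$. Pick $b_0 \in \inte B$. Since $B$ is closed and convex and $\mathbf v$ is not a recession direction, $\{t \ge 0 : b_0 + t\mathbf v \in B\}$ is an interval of the form $[0, t^*]$ with $0 < t^* < \infty$, so $p := b_0 + t^*\mathbf v$ lies on $\partial B = M$. Let $\mathbf w \in \Normals$ be an inward normal of a supporting hyperplane of $B$ at $p$. Since $b_0$ is an interior point, it does not lie on that hyperplane, so $\langle b_0 - p, \mathbf w\rangle > 0$; but $b_0 - p = -t^*\mathbf v$, whence $\langle \mathbf v, \mathbf w\rangle < 0$ and $\mathbf v \notin \Normals^*$. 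Combining Steps 1 and 2, $\Normals^* = \Recess$.

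\emph{Step 3: $\Recess^* = \overline{\Normals}$.} Dualizing gives $\Recess^* = (\Normals^*)^* = \Normals^{**}$. By Proposition \ref{dual_props}(5), $\Normals^{**}$ is the closure of the smallest convex cone $C$ containing $\Normals$. By Lemma \ref{thm:wu1}, $\overline{\Normals}$ is itself a closed convex cone containing $\Normals$ (it equals $\overline{K}$ for the open convex cone $K$ there), so $C \subseteq \overline{\Normals}$, while also $\Normals \subseteq C$; taking closures yields $\overline{C} = \overline{\Normals}$, i.e. $\Normals^{**} = \overline{\Normals}$.

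The routine parts are Steps 1 and 3; the place to be careful is Step 2, where one must confirm, from the ray-translate definition of $\Recess$ in Definition \ref{def_Recess}, that the ray from an interior point of $B$ leaves $B$ at a finite parameter (standard convexity of the interval of admissible $t$) and that $M$, being the boundary of $B$, carries a supporting hyperplane at the exit point $p$ with a genuine inward normal lying in $\Normals$. One should also keep the inward-versus-outward normal convention straight so that the sign in $\langle \mathbf v, \mathbf w\rangle < 0$ comes out correctly.
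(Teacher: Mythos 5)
Your proof is correct and follows essentially the same route as the paper's: the same two inclusions for $\Normals^* = \Recess$ (the reverse inclusion via the identical interior-point/exit-point contrapositive), and the same derivation of $\Recess^* = \overline{\Normals}$ from $\Recess^* = \Normals^{**}$, Proposition \ref{dual_props}(5), and Lemma \ref{thm:wu1}. The only cosmetic difference is in Step 1, where you let $t \to \infty$ along the ray based at $b$ instead of translating the recession ray to the foot of the normal as the paper does.
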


\begin{proof}
Let $B$ be the convex body bounded by $M$. Suppose $ \mathbf u \in \Recess$. Let $ \w \in \Normals $ be a nonzero normal vector at $\mathbf a \in M$. Then $ \x=\mathbf u + \mathbf a \in B $. By definition of $\Normals$, the hyperplane orthogonal to $ \w$ supports $B$ at $ \mathbf a $, i.e. $ \langle \mathbf y - \mathbf a, \w \rangle \geq 0 $ for all $ \mathbf y \in B $. In particular, $ \langle \x - \mathbf a, \w \rangle = \langle \mathbf u,\w \rangle \geq 0$.

Suppose $ \mathbf u \not\in \Recess $. Choose $ \x \in \inte B $. The ray $ \{ \x + \lambda \mathbf u : \lambda > 0 \}$ leaves $B$, say at $\mathbf a = \x + \lambda \mathbf u \in M$ for some $\lambda > 0$. Let $ \w \in \Normals$ be a nonzero normal to $M$ at $ \mathbf a$. Then $ \langle \x - \mathbf a, \w \rangle > 0 $ since $ \x \in \inte B $, so $ \langle -\lambda \mathbf u , \w \rangle > 0 $ and $ \langle \mathbf u , \w \rangle < 0 $. Thus by Definition \ref{def_dualcone}, $\Normals^* = \Recess$.

By Proposition \ref{dual_props}, $ \Recess^* = \Normals^{**} $ is the closure of the smallest convex cone containing $\Normals$. By Lemma \ref{thm:wu1}, $\overline{\Normals}$ is the closure of the smallest convex cone containing $\Normals$, so $ \Recess^* = \overline{\Normals} $.
\end{proof}

%\begin{remark}\label{remark:convex-hyp}
%The smoothings in Example \ref{ex:race-track} may be carried out without changing the recession cone, hence by Lemma \ref{lem:dual}, without changing $ \Recess^* = \overline{\Normals} $. Since $\overline{\Normals} $ is timelike, it follows that the smoothed hypersurfaces are timelike.
% \end{remark}

Now we return to smooth convex hypersurfaces (Definition \ref{def:convex-hyp}).
We will use a definition of strong strict convexity that makes sense even at degenerate points, where second fundamental form is undefined:

\begin{definition}\label{def:weak-strict-convexity} Let $M$ be a convex hypersurface of $\SEu$. We say $p\in M$ is a \emph{point of weak strict convexity} if $$M\cap T_pM=\{p\}.$$ 
We say $p\in M$ is a \emph{point of strong strict convexity} if a neighborhood of $p$ in $M$ is the level set of a regular function that is defined on a neighborhood of $p$ in $\SEu$ and has definite Hessian on $T_p M$.
(Equivalently, $p$ is a point of strong strict convexity in an associated Euclidean space.)
\end{definition}

In light of the following lemma, we may speak of convex hypersurfaces \emph{with a point of strict convexity} without specifying the type:

\begin{lem}\label{lem:weak-strong}
Let $M$ be a 
convex hypersurface of\, $\SEu$. Then the following are equivalent:
\begin{enumerate}
\item\label{strong}
 $M$ has a point of strong strict convexity, 
 \item\label{no-line}
 $M$ contains no line of $\SEu$,
 \item\label{weak}
 $M$ has a point of weak strict convexity.
 \end{enumerate}
\end{lem}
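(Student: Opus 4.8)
\textbf{Proof proposal for Lemma \ref{lem:weak-strong}.}

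The plan is to establish the cycle of implications $(\ref{strong})\Rightarrow(\ref{no-line})\Rightarrow(\ref{weak})\Rightarrow(\ref{strong})$. The first implication is the easiest: if $p$ is a point of strong strict convexity, then near $p$ the hypersurface $M$ is a level set of a function with definite Hessian on $T_pM$, so in an associated Euclidean space $M$ is locally strictly convex at $p$; a line through $p$ contained in $M$ would have to be tangent to $M$ at $p$ and would force the second fundamental form to vanish in that direction, contradicting definiteness. (One must note here that ``contains a line'' is a Euclidean-affine notion, unchanged by passing to an associated Euclidean space, since affine subspaces of $\SEu$ are the same sets as affine subspaces of $\E^{n+k}$.)

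For $(\ref{no-line})\Rightarrow(\ref{weak})$, I would work in an associated Euclidean space $\E^{n+k}$, where $M$ bounds a convex body $B$. The key input is the structure of the recession cone $\Recess$ of $M$: by convexity theory, the recession cone (``lineality considerations'' aside) is a line precisely when $B$ contains a full line, equivalently when $M$ contains a line. So if $M$ contains no line, then $\Recess$ contains no line, i.e. $\Recess$ is pointed. Then I would pick a supporting hyperplane of $B$ whose inward normal $\mathbf w$ lies in the interior of $\Normals$ relative to the subspace $V$ of Lemma \ref{thm:wu1}; by Lemma \ref{lem:dual} the dual relationship $\Normals^*=\Recess$ together with pointedness of $\Recess$ shows that $\overline{\Normals}$ spans $\SEu$ (if $\overline\Normals$ lay in a proper subspace, its dual $\Recess$ would contain the orthogonal complement of that subspace, hence a line). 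Thus $V=\SEu$ and $\Normals$ has nonempty interior. Choosing $\mathbf w\in\inte\Normals$, the supporting hyperplane $H$ with inward normal $\mathbf w$ meets $B$ in a compact face (it cannot contain a ray, else $\mathbf w$ would be orthogonal to a recession direction, i.e.\ $\langle\cdot,\mathbf w\rangle$ would vanish on a ray of $\Recess$, contradicting $\mathbf w\in\inte\Normals=\inte\Recess^*$). A compact exposed face of a convex body contains an extreme point $p$, and at an extreme point $p$ where moreover $\mathbf w$ is the \emph{unique} supporting normal direction (which holds on a dense set of $\inte\Normals$ by smoothness of $M$), one checks $M\cap T_pM=\{p\}$: any other point $q$ of $M\cap T_pM$ would, together with $p$, span a segment in $M\cap H$, so $p$ would not be extreme in that face. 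Hence $p$ is a point of weak strict convexity.

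For $(\ref{weak})\Rightarrow(\ref{strong})$, suppose $M\cap T_pM=\{p\}$. Again pass to an associated Euclidean space; I claim the Gauss map image has $\mathbf w:=$ (the unit normal at $p$) as an interior point of $N(M)$, and that $p$ is the unique preimage. The point is that $M\cap T_pM=\{p\}$ means $M$ lies strictly on one side of $T_pM$ except at $p$, so in coordinates where $T_pM=\{x^{n+k}=0\}$ and $B\subseteq\{x^{n+k}\ge 0\}$, the hypersurface $M$ is locally a graph $x^{n+k}=\phi(x^1,\dots,x^{n+k-1})$ with $\phi\ge 0$, $\phi(0)=0$, and $\phi(x)>0$ for $x\ne 0$ in a neighborhood. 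A convex function attaining a strict interior minimum need not have positive definite Hessian there (e.g.\ $\phi=x^4$), so weak strict convexity at $p$ itself does not give strong strict convexity at $p$. The resolution — and this is the main obstacle — is to argue that strong strict convexity holds at \emph{some} point, not necessarily $p$: since $M$ contains no line (as weak strict convexity at $p$ precludes a line through $p$, and a line anywhere in the convex hypersurface $M$ would be a line in $B$, forcing, by convexity, $M$ to contain a translate of that line through $p$ — here one uses that $B$ plus a recession line has $p$ on a translate of that line in its boundary), the Gauss map $N:M\to S^{n+k-1}$ has image with nonempty interior (this is exactly the Euclidean statement underlying Lemma \ref{thm:wu1} with $V=\E^{n+k}$, derived as in the previous paragraph). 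A classical argument (Sard's theorem, or the fact that a convex hypersurface whose Gauss image has interior must have a point of positive Gauss–Kronecker curvature — indeed $\int_M K\,dA = \mathrm{vol}(N(M))>0$ forces $K>0$ somewhere) produces a point $p'\in M$ where the second fundamental form is definite, i.e.\ a point of strong strict convexity. Then pulling this back through the vector space isomorphism to $\SEu$ via the parenthetical equivalence in Definition \ref{def:weak-strict-convexity} completes the cycle.

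The main obstacle, as indicated, is the failure of the naive implication ``weak strict convexity at $p$ $\Rightarrow$ strong strict convexity at $p$'': one cannot stay at the same point, and must instead route through the global statement that $M$ contains no line, use the positivity of the total curvature / nonempty Gauss image, and extract a point of definite second fundamental form elsewhere on $M$. Getting the no-line implication in both directions cleanly — and making precise the passage between $\SEu$ and its associated Euclidean space for all three conditions (each of which is either affine or expressible via the Euclidean Gauss map after the normal-cone identification of Lemma \ref{thm:wu1}) — is where the care is needed; the rest is standard convex geometry.
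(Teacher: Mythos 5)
Your route is genuinely different from the paper's. The paper runs the cycle $(\ref{no-line})\Rightarrow(\ref{strong})\Rightarrow(\ref{weak})\Rightarrow(\ref{no-line})$: the hard implication $(\ref{no-line})\Rightarrow(\ref{strong})$ is outsourced entirely to Lemma 2 of Hartman--Nirenberg (or Chern--Lashof) applied in an associated Euclidean space, $(\ref{strong})\Rightarrow(\ref{weak})$ is declared obvious, and $(\ref{weak})\Rightarrow(\ref{no-line})$ is the one-line ruling argument (a line in $M$ forces $M$ to be ruled by parallel lines). You run the cycle the other way and in effect reprove the cited lemma from scratch: your third paragraph derives from ``no line'' that $\Recess$ is pointed, hence via Lemma \ref{lem:dual} that $\overline{\Normals}$ spans, hence that the Gauss image has nonempty interior, and then extracts a point of nondegenerate (hence, by semidefiniteness, definite) second fundamental form by Sard. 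That chain is correct and is a legitimate self-contained substitute for the citation; it costs more work but removes the dependence on the external references.

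The genuine gap is in your $(\ref{no-line})\Rightarrow(\ref{weak})$ step. You take $\mathbf w\in\inte\Normals$, note that the exposed face $F=B\cap H$ is compact, pick an extreme point $p$ of $F$, and conclude $M\cap T_pM=\{p\}$ because ``any other point $q$ \dots would span a segment \dots so $p$ would not be extreme in that face.'' That deduction is false: $M\cap T_pM=B\cap H=F$, and $p$ can be an endpoint of a segment contained in $F$ while remaining an extreme point of $F$ (e.g.\ $F$ a nondegenerate segment or a polytope face). Extremality of $p$ in $F$ does not force $F=\{p\}$; you need $F$ itself to be a singleton, which holds only for a generic $\mathbf w$ --- the support function of $B$ is concave and finite on $\inte\Normals$, hence differentiable almost everywhere, and its points of differentiability are exactly the $\mathbf w$ with singleton face. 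Your parenthetical about ``$\mathbf w$ is the unique supporting normal direction'' conflates uniqueness of the normal at $p$ (automatic by smoothness) with uniqueness of the point having normal $\mathbf w$ (what is actually needed). The step is repairable either by that genericity argument or, more cheaply, by composing your own $(\ref{no-line})\Rightarrow(\ref{strong})$ (the Sard argument) with the trivial $(\ref{strong})\Rightarrow(\ref{weak})$. A smaller quibble: in $(\ref{strong})\Rightarrow(\ref{no-line})$ you only exclude lines through the strictly convex point $p$; excluding a line elsewhere in $M$ requires the ruling fact, which you do state, but only later, inside the $(\ref{weak})\Rightarrow(\ref{strong})$ paragraph.
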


\begin{proof}
$(\ref{no-line})\Rightarrow (\ref{strong})$:
By Lemma 2 of \cite{h-n} or Lemma 2 of \cite{c-l}, applied to the embedding of $M$ in an associated Euclidean space $\E^{n+k}$, if there is no point of strong strict convexity then $M$ contains a line.

$(\ref{weak})\Rightarrow(\ref{no-line})$: If $M$ contains a line, then $M$ is ruled by parallel lines. Therefore $M$ contains no point of weak strict convexity.

$(\ref{strong})\Rightarrow (\ref{weak})$: Obvious.
\end{proof}

\begin{lem}\label{lem:RN}
Suppose $M$ is a timelike convex hypersurface in $\M$, $n\ge 2$, bounding the convex body $B$, and having a point of strict convexity. Let $\Recess$ and $\Normals$ denote the recession cone and normal cone of $M$, respectively. Then there is a nonzero vector $ \mathbf v _0 \in (\inte \Normals) \cap \Recess $.
\end{lem}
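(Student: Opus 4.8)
The plan is to exploit the duality $\Normals^* = \Recess$, $\Recess^* = \overline{\Normals}$ from Lemma \ref{lem:dual} together with the structure of the normal cone of a timelike convex hypersurface. Since $M$ is timelike, at every point $p\in M$ the tangent hyperplane $T_pM$ is timelike, so a normal vector to $M$ at $p$ is spacelike; hence $\Normals$ consists of spacelike vectors, i.e. $\Normals$ is a spacelike cone. By Lemma \ref{thm:wu1}, $\Normals$ sits between an open convex cone $K$ and its closure $\overline K$ inside a subspace $V$ of $\M$, and $\inte\Normals = K$ (interior relative to $V$). Since $M$ has a point of strict convexity, by Lemma \ref{lem:weak-strong} $M$ contains no line; I expect this to force $\Normals$ (equivalently $K$) to have nonempty interior relative to $\M$ itself, so that $V = \M$. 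Indeed, if $\Normals$ lay in a proper subspace $V$, then $\Recess = \Normals^*$ would contain the line $V^\perp$ (all vectors orthogonal to $V$), forcing $B$, hence $M$, to contain a line — contradicting no-line. So $K = \inte\Normals$ is a nonempty open convex spacelike cone in $\M$.

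Next I would apply Lemma \ref{lem:max_cone} to the spacelike convex cone $\overline{K} = \overline{\Normals}$. Since $\overline{K}$ has nonempty interior in $\M$ and is not contained in a subspace of dimension $\le n$ (again by the no-line property, via the orthogonality computation above — a spacelike cone lying in an $n$-dimensional subspace would have $\Recess$ containing a line), Lemma \ref{lem:max_cone} gives that $(\overline K)^* = \Recess^{**} \supseteq \Recess$ — wait, more carefully: Lemma \ref{lem:max_cone} says the dual of the spacelike cone $\overline\Normals$ contains a pair of linearly independent null vectors. But $(\overline\Normals)^* = \Normals^* = \Recess$ by Lemma \ref{lem:dual} and Proposition \ref{dual_props}(2) (dualizing a set and its closure give the same dual cone, since $\langle\cdot,\cdot\rangle$ is continuous). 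Hence $\Recess$ contains two linearly independent null vectors $\mathbf u\in\Future$, $\mathbf u'\in\Past$, and in particular $\Recess$ has nonempty interior in $\M$: the open segment between $\mathbf u$ and $\mathbf u'$ lies in $\inte\Recess$ and meets the spacelike directions.

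Now the heart of the matter: producing $\mathbf v_0\in(\inte\Normals)\cap\Recess$. The plan is to show $\inte\Normals$ and $\Recess$ cannot be separated, equivalently that they must overlap. Since $\Normals^* = \Recess$ and $\Recess^* = \overline\Normals$, by the biduality Proposition \ref{dual_props}(5) both $\overline\Normals$ and $\Recess$ are closed convex cones in $\M$ that are dual to each other. For a pointed closed convex cone $\Recess$ with nonempty interior and its dual $\Recess^* = \overline\Normals$ (also closed convex with nonempty interior, since $\Recess$ is pointed by Proposition \ref{dual_props}(4) applied after checking $\Recess$ has nonempty interior), the key fact is that a self-dual-type relation forces intersection: more precisely, because $\Recess$ contains both a future null vector and a past null vector, $\Recess$ contains spacelike vectors in its interior, and I would argue a spacelike vector $\mathbf v$ with $\mathbf v\in\inte\Recess$ satisfies $\langle\mathbf v,\mathbf x\rangle\ge 0$ for all $\mathbf x\in\Recess^* = \overline\Normals$ — so $\mathbf v\in(\overline\Normals)^* $; but we want $\mathbf v\in\Normals$ itself, and we want $\mathbf v\in\Recess$. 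The cleaner route: pick $\mathbf v\in\inte\Recess$ spacelike; then $\langle\mathbf v,\mathbf v\rangle>0$ and for $\mathbf v$ to lie in $\Normals = (\Recess)^*\cap(\ldots)$... I expect the correct argument is that $\inte\Normals$, being the interior of $\Recess^*$, consists exactly of the vectors $\mathbf w$ with $\langle\mathbf w,\mathbf x\rangle>0$ for all nonzero $\mathbf x\in\Recess$, and one must locate such a $\mathbf w$ inside $\Recess$. Since $\Recess$ is a pointed closed cone containing $\pm$-null vectors spanning a timelike plane $P$, its interior contains a spacelike direction $\mathbf v_0$ in $P$ with $\langle\mathbf v_0,\mathbf u\rangle>0$ and $\langle\mathbf v_0,\mathbf u'\rangle>0$; verifying $\langle\mathbf v_0,\mathbf x\rangle>0$ for \emph{all} nonzero $\mathbf x\in\Recess$ (not just the two null generators) is the real content, and will use that $\mathbf v_0\in\inte\Recess$ means $\langle\mathbf v_0,\cdot\rangle$ is a strictly positive functional on $\Recess\setminus\{\mathbf 0\}$ precisely because $\Recess = \Recess^{**} = (\overline\Normals)^*$ and strictly interior points of a cone pair up strictly positively with the dual cone. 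This last step — showing the spacelike interior point of $\Recess$ actually lands in $\inte\Normals$ — is where I expect the main obstacle to lie, and it will hinge on combining Proposition \ref{dual_props}(6) (boundary characterization) with the fact that $\mathbf v_0$, being interior to $\Recess$, is not on the boundary and thus pairs strictly positively with every nonzero element of $\Recess^* = \overline\Normals$, placing $\mathbf v_0$ in the interior of $(\overline\Normals)^* = \Recess$ — so one instead swaps the roles and locates $\mathbf v_0$ as an interior point of $\Normals$ that happens to be a recession direction, by choosing a spacelike normal direction $\mathbf w_0\in\inte\Normals$ and checking $\langle\mathbf w_0,\mathbf x\rangle\ge 0$ for all $\mathbf x\in\Recess$, which is automatic, together with $\langle\mathbf w_0,\mathbf w_0\rangle$-considerations showing $\mathbf w_0\in\Recess = \Normals^*$. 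I would organize the final write-up around this symmetric pairing and the two null generators supplied by Lemma \ref{lem:max_cone}.
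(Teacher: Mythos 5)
Your first two steps are sound and match the paper's ingredients: $\Normals$ is spacelike because $M$ is timelike, the no-line property (Lemma \ref{lem:weak-strong}) forces $\inte\Normals\neq\emptyset$, and Lemma \ref{lem:max_cone} applied to this spacelike cone yields linearly independent null vectors $\mathbf u\in\Future\cap\Recess$, $\mathbf u'\in\Past\cap\Recess$ via $(\inte\Normals)^*=\Normals^*=\Recess$. But the proof then breaks down at exactly the point you flag as ``the heart of the matter,'' and it leans on a false intermediate claim. Two linearly independent null recession vectors do \emph{not} make $\inte\Recess$ nonempty, and the open segment between $\mathbf u$ and $\mathbf u'$ need not lie in $\inte\Recess$. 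Concretely, in $\Mink$ with coordinates $(x,y,t)$ let $M$ be the graph $y=x^2+\sqrt{1+t^2}$ over the timelike plane $\{y=0\}$: this is a timelike, strongly strictly convex hypersurface, yet $\Recess=\{(0,b,c):b\ge |c|\}$ is a two-dimensional cone with empty interior in $\Mink$, even though it contains the null vectors $(0,1,\pm 1)$. (The lemma of course still holds there: $(0,1,0)\in(\inte\Normals)\cap\Recess$.) So every branch of your final paragraph that begins ``pick $\mathbf v_0\in\inte\Recess$'' is vacuous in general.

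The underlying error is a duality mix-up: a point interior to $\Recess$ pairs strictly positively with $\Recess^*\smallsetminus\{\mathbf 0\}=\overline{\Normals}\smallsetminus\{\mathbf 0\}$, not with $\Recess\smallsetminus\{\mathbf 0\}$, so interiority in one cone never by itself places a vector inside the \emph{other} cone; and the ``swap roles'' variant fails for the same reason --- in the example above, $(5,1,0)\in\inte\Normals$ satisfies $\langle (5,1,0),\cdot\rangle\ge 0$ on $\Recess$ automatically (that is just $\inte\Normals\subseteq\Recess^*$) but is not a recession direction. What is actually needed, and what the paper does, is the separation argument you name but never execute: assume $(\inte\Normals)\cap\Recess=\emptyset$ and separate these convex cones by a hyperplane whose normal $\w$ satisfies $\langle\w,\cdot\rangle\le 0$ on $\Normals$ and $\ge 0$ on $\Recess$. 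Then $\w\in-\Recess$ and $\w\in\Recess^*=\overline{\Normals}$, whence $\langle\w,\w\rangle\le 0$ and $\langle\w,\w\rangle\ge 0$ (the latter because $\Normals$ is spacelike), so $\w$ is null; and the inequalities $\langle\w,\mathbf u\rangle\ge 0$, $\langle\w,\mathbf u'\rangle\ge 0$ against the future and past null vectors of Lemma \ref{lem:max_cone} then force a contradiction (using, for the residual case $\w$ proportional to $\mathbf u$ or $\mathbf u'$, that $\Recess=\Normals^*$ is pointed by Proposition \ref{dual_props}). Without this step, or some substitute for it, the asserted intersection remains unproven.
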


\begin{proof}
Since $M$ has a point of strong strict convexity by Lemma \ref{lem:weak-strong}, then $ \inte \Normals \ne\emptyset$.

Suppose $(\inte \Normals) \cap \Recess = \emptyset $. Then the convex cones $ \inte \Normals$ and $\Recess$ are separated, i.e. lie in opposite closed halfspaces bounded by some $n$-dimensional subspace $H$. Let $\w$ be a nonzero normal vector to $H$, chosen so that $\langle \w, \mathbf n \rangle \leq 0$ for all $\mathbf n \in\Normals$ and $ \langle \w, \mathbf u \rangle \geq 0 $ for all $ \mathbf u \in \Recess $. Then $\w\in -\Recess$ and $\w \in \Recess^* = \overline{\Normals} $ by Lemma \ref{lem:dual}. Since $\Normals$ consists of spacelike vectors (because $M$ is timelike), $ \langle \w, \w \rangle \geq 0 $, but since $ \w \in -\Recess$, $ \langle \w, \w \rangle \leq 0 $. We conclude that $ \w $ is null, so $ \w \in \Future \cup \Past $.
 
Since $\inte\Normals$ is a spacelike convex cone with nonempty interior relative to $\M$, we can apply Lemma \ref{lem:max_cone} and choose a pair of linearly independent null vectors $ \mathbf u \in \Future \cap \Recess $ and $ \mathbf u' \in \Past \cap \Recess $. Since $\Recess = \Normals^* = \overline{\Normals}^*$, $ \langle \w, \mathbf u \rangle \geq 0 $ and $ \langle \w, \mathbf u' \rangle \geq 0 $. However, this means that $ \w \in \Future \cap \Past = \{ \mathbf 0 \} $, a contradiction.
\end{proof}

The following example shows that, in contrast to $\Eu$ \cite{wu}, for a 
non-timelike convex hypersurface in $\M$ with $\inte \Normals \neq \emptyset$, $ (\inte \Normals) \cap \Recess $ can be empty.

\begin{example}\label{RN-counterexample}
Let $B = \{ (x, t) \in \mink : xt \geq 1, x > 0 \}$ and $ M = \partial B $. $M$ is a strictly convex hypersurface in $\mink$. The interior of the normal cone $ \inte \Normals $ is the open fourth quadrant and the recession cone $ \Recess$ is the closed first quadrant, so $( \inte \Normals) \cap \Recess= \emptyset $.
\end{example}

\begin{thm}\label{thm:wu2}
Suppose $M$ is a timelike convex hypersurface in $\M$, $n\ge 2$, with a point of strict convexity. Then coordinates of $\M$ can be chosen so that the tangent hyperplane $T_{\mathbf 0}M$ to $M$ at the origin is $\{x^1=0\}$, and the following properties hold, where \,$ \inte D$ and \,$\partial D$ denote the interior and boundary of $D$ relative to \,$T_{\mathbf 0}M$:

\begin{enumerate}

\item\label{graph}
Let $ \Pi : \M \to T_{\mathbf 0} M $ be orthogonal projection, and $D$ be the convex set $ \Pi(M) $. Then over $ \inte D $, $M$ is the graph of a convex function $u: \inte D \to \R$.

\item
\label{halfline}
For every $p \in D - \inte D $, $M\cap \Pi^{-1}(p)$ is a closed spacelike half-line of $\M$.
 
 \item
 \label{level}
For any $a > 0$, $L_a = M \cap \{ x^1 = a \} $ is homeomorphic to $ S^{n-1} $
\end{enumerate}
\end{thm}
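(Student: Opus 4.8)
\textbf{Proof plan for Theorem \ref{thm:wu2}.}

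The plan is to mimic Wu's Euclidean argument (Theorem 1 of \cite{wu}) while using Lemma \ref{lem:RN} to handle the novel difficulty that, in $\M$, orthogonal projection onto a tangent hyperplane need not be a proper or injective map on $M$. The first step is to choose the coordinates. By Lemma \ref{lem:RN}, there is a nonzero vector $\mathbf v_0 \in (\inte\Normals)\cap\Recess$. Since $M$ is timelike, $\mathbf v_0$ is spacelike; rescale so $\langle\mathbf v_0,\mathbf v_0\rangle = 1$ and take $\mathbf v_0$ to be the inward normal at a point of $M$, which we declare to be the origin; set $x^1$ to be the coordinate dual to $\mathbf v_0$, so $T_{\mathbf 0}M = \{x^1 = 0\}$ and $B\subseteq\{x^1\ge 0\}$. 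Because $\mathbf v_0 \in \Recess$, the body $B$ contains the ray $\{x^1 \ge 0\}$ in the $\mathbf v_0$-direction through every interior point, so $B = D'\times[0,\infty)$ where $D'$ is a convex body in $T_{\mathbf 0}M$ (endowed with the induced, positive-definite metric) and the bracket is the $x^1$-axis; here $D' = \Pi(B)$ and $D = \Pi(M) = D'$ as well, since $M=\partial B$.

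For (\ref{graph}): restricted to $\inte D$, the fiber $M\cap\Pi^{-1}(p)$ is the graph over $p$ of the function $u(p) = \min\{x^1 : (p,x^1)\in B\}$, the lower boundary of the vertical ray family; $u$ is convex because $B = \{(p,s): s\ge u(p)\}$ is convex, and $u$ is smooth on $\inte D$ because $M$ is a smooth embedded hypersurface which, over $\inte D$, meets each vertical line transversally (the tangent hyperplane at such a point cannot contain the $\mathbf v_0$-direction, as $\mathbf v_0\in\inte\Normals$ forces the inward normals along $M$ to stay in a cone with nonempty interior, ruling out vertical tangent planes over interior base points). For (\ref{halfline}): for $p\in D-\inte D = \partial D$, the vertical line $\Pi^{-1}(p)$ meets $B$ in a closed half-line $\{p\}\times[c,\infty)$ with $c = u(p)$ finite (finiteness is exactly where $\mathbf v_0\in\Recess$ is used — $B$ contains no full vertical line since $B$ bounds a hypersurface, but it does contain the upward ray), and this entire half-line lies in $\partial B = M$ because $p\in\partial D$ means every neighborhood of $(p,s)$ in $\M$ meets the complement of $B$; the half-line is spacelike since it is vertical in coordinates where the metric restricted to $T_{\mathbf 0}M\oplus\R\mathbf v_0$ is positive on the $x^1$-axis (recall $\langle\mathbf v_0,\mathbf v_0\rangle=1$ and $T_{\mathbf 0}M$ is spacelike because $\mathbf v_0$ is the timelike... no: $M$ timelike means $\mathbf v_0$ spacelike, hence $T_{\mathbf 0}M$ has signature $(n-1,1)$; nonetheless the $x^1$-direction itself is spacelike, which is all that is claimed).

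For (\ref{level}): fix $a>0$ and consider $L_a = M\cap\{x^1=a\}$. The hyperplane $\{x^1=a\}$ is a translate of $T_{\mathbf 0}M$, hence meets $B$ in the convex body $B_a = \{p\in D : u(p)\le a\}\times\{a\}$ (using the graph description over $\inte D$ and the half-line description over $\partial D$). Then $L_a = \partial B_a$ is the boundary of an $(n-1)$-dimensional convex body, hence homeomorphic to $S^{n-1}$, provided $B_a$ is compact. Compactness of $B_a$ is the one remaining point and is where I expect the real work: a priori $\{u\le a\}$ could be unbounded in $D$. One argues that if $\{u\le a\}$ were noncombed... noncompact, then it would contain a ray, giving a ray in $B$ transverse to $\mathbf v_0$, i.e. a recession direction $\mathbf r\in\Recess$ with $\langle\mathbf r,\mathbf v_0\rangle = 0$; but $\mathbf v_0\in\inte\Normals$ and $\Recess = \Normals^*$ by Lemma \ref{lem:dual}, which forces $\langle\mathbf r,\mathbf v_0\rangle > 0$ for every nonzero $\mathbf r\in\Recess$, a contradiction — so $B_a$ is compact and (\ref{level}) follows.

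\textbf{Main obstacle.} The delicate part, as the authors flag via Lemma \ref{lem:RN} and Example \ref{RN-counterexample}, is that in $\M$ one cannot take $T_{\mathbf 0}M$ to be an arbitrary tangent hyperplane: the projection $\Pi$ only behaves like Wu's Euclidean projection when the normal direction is simultaneously a normal-cone \emph{interior} direction and a recession direction. Once Lemma \ref{lem:RN} supplies such a $\mathbf v_0$, establishing that the sublevel sets $\{u\le a\}$ are \emph{bounded} (equivalently that $L_a$ is compact, not just closed) is the crux, and it is precisely the pairing $\langle\mathbf r,\mathbf v_0\rangle>0$ for recession directions $\mathbf r$, i.e. $\mathbf v_0\in\inte\Normals = \inte(\Recess^*)$ via Lemma \ref{lem:dual} and Proposition \ref{dual_props}, that rules out unbounded sublevel sets.
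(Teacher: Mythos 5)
Your proposal is correct in substance but takes a genuinely different route from the paper. Both proofs begin identically: Lemma \ref{lem:RN} supplies $\mathbf v_0\in(\inte\Normals)\cap\Recess$, which becomes the inward normal at the origin with $T_{\mathbf 0}M=\{x^1=0\}$. From there the paper does almost no further work: it passes to the associated Euclidean space $\Eu$ determined by these coordinates, observes that $\Recess$ is metric-independent, that $\mathbf v_0$ (having zero time-component) still lies in $(\inte\Normals)\cap\Recess$ for the Euclidean normal cone, and that orthogonal projection onto $\mathbf v_0^\perp=\{x^1=0\}$ is the \emph{same map} in both metrics; then all three conclusions are quoted directly from Wu's Theorem 2. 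You instead re-derive Wu's theorem by hand inside $\M$, and the payoff of your version is that it makes explicit where the hypothesis $\mathbf v_0\in\inte\Normals$ is actually consumed: your compactness argument for $L_a$ (a ray in $B\cap\{x^1=a\}$ would give $\mathbf r\in\Recess$ with $\langle\mathbf r,\mathbf v_0\rangle=0$, impossible since $\mathbf v_0\in\inte\overline{\Normals}=\inte(\Recess^*)$ forces $\langle\mathbf r,\mathbf v_0\rangle>0$ by Proposition \ref{dual_props}(6) and Lemma \ref{lem:dual}) is exactly the content that the citation to Wu hides. Your approach is more self-contained; the paper's is shorter and reuses the Euclidean result verbatim.

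Two local points to repair in a write-up. First, the assertion ``$B=D'\times[0,\infty)$'' is false as written (it would make $B$ a cylinder); what you mean, and what you correctly use two sentences later, is $B=\{(p,s):p\in D,\ s\ge u(p)\}$, i.e.\ $B+[0,\infty)\mathbf v_0=B$ with vertical fibers that are upward half-lines. Second, your justification that $u$ is smooth over $\inte D$ (``normals stay in a cone with nonempty interior, ruling out vertical tangent planes'') does not quite parse; the clean argument is that a supporting hyperplane of $B$ containing the $\mathbf v_0$-direction projects under $\Pi$ to a supporting hyperplane of $D=\Pi(B)$, so vertical tangent planes can occur only over $\partial D$, and over $\inte D$ the implicit function theorem applies. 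Neither issue affects the viability of the plan.
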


\begin{proof}
\emph{(\ref{graph}), (\ref{halfline}), (\ref{level}).} 
Choose $ \mathbf v_0 \in (\inte \Normals) \cap \Recess $ as in Lemma \ref{lem:RN}, and linear coordinates on $ \M $ so that the tangent hyperplane $T_{\mathbf 0}M$ is $\{x^1=0\}$ and $\mathbf v_0 = (1, 0, ... , 0) \in \Normals$ is an inward normal to $M$ at the origin. 
Since a compact convex hypersurface of $\M$ cannot have all tangent planes timelike, $M$ is noncompact. If we regard $M$ as embedded in the associated Euclidean space defined by these coordinates, $M$ becomes the boundary of a convex body in $\Eu$ that contains no lines of $\Eu$.
 
By choice of $\mathbf v_0$, it remains true that $ \mathbf v_0 \in (\inte \Normals) \cap \Recess $ when $\Normals$ and $\Recess$ are defined in this associated $\Eu$. By Theorem 2 in \cite{wu}, (\ref{graph}), (\ref{halfline}), and (\ref{level}) hold because orthogonal projection in the associated $\Eu$ to $T_{\mathbf 0} M$ is the same map as orthogonal projection in $\M$ to $T_{\mathbf 0} M$. \end{proof}

\begin{remark}
Although $M$ has a point of (strong or weak) strict convexity, it is not always possible to choose the coordinates so that the origin
in Theorem \ref{thm:wu2}
is such a point. Thus coordinates cannot always be chosen so that the critical set of $u$ is a point. Locating an origin depends on Lemma \ref{lem:RN} concerning $(\inte \Normals) \cap \Recess $.
\end{remark}

\begin{thm}\label{thm:support-convex}
Suppose $M$ is a timelike convex hypersurface of $\M$ with a point of strict convexity. Then $M$ supports a proper nonnegative convex function $f$. If $M$ is strongly strictly convex, then $M$ supports a proper 
strictly convex function $f$.
\end{thm}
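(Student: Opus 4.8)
The plan is to take for $f$ the restriction to $M$ of a well-chosen linear coordinate function on $\M$. Using Theorem \ref{thm:wu2}, choose linear coordinates on $\M$ so that a point of $M$ is the origin, $T_{\mathbf 0}M=\{x^1=0\}$, and $\mathbf v_0:=(1,0,\dots,0)$, which is the Minkowski gradient of $x^1$, lies in $(\inte\Normals)\cap\Recess$ and is an inward normal to $M$ at $\mathbf 0$ (the existence of such a $\mathbf v_0$ being Lemma \ref{lem:RN}). Let $B$ be the convex body bounded by $M$ and set $f:=x^1|_M$. I claim $f$ is a proper nonnegative convex function, and that it is strictly convex when $M$ is strongly strictly convex. (Here $n\ge 2$, so that Theorem \ref{thm:wu2} applies; the case $n=1$ is elementary.)

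Nonnegativity is immediate, since $\{x^1=0\}=T_{\mathbf 0}M$ supports $B$ with $\mathbf v_0$ pointing inward, so $B\subseteq\{x^1\ge 0\}$ and hence $f\ge 0$. For properness, fix $a$; the sublevel set $f^{-1}((-\infty,a])=M\cap\{x^1\le a\}$ is closed in $\M$ (the boundary of a convex body is closed) and is contained in the closed convex set $B\cap\{x^1\le a\}$, whose recession cone is $\Recess\cap\{x^1\le 0\}$. Since $M$ has a point of strict convexity it contains no line (Lemma \ref{lem:weak-strong}), so $B$ contains no line and $\Recess$ is a pointed closed convex cone. By Lemma \ref{thm:wu1}, $\overline\Normals=\overline K$ for an open convex cone $K$ in a subspace $V$, with $\inte\Normals=K$; if $V\ne\M$ then $V^\perp$ is a nonzero subspace, and since $V^\perp$ pairs to zero with $V\supseteq\Normals$ we get $V^\perp\subseteq\Normals^*=\Recess$ (Lemma \ref{lem:dual}), contradicting pointedness. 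Hence $V=\M$, and $\inte\Normals=\inte(\Recess^*)$ by Lemmas \ref{thm:wu1} and \ref{lem:dual}. An interior point of $\Recess^*$ pairs strictly positively with every nonzero element of $\Recess$, so $\langle\mathbf v_0,\mathbf u\rangle>0$ for all $\mathbf u\in\Recess-\{\mathbf 0\}$; as $x^1(\mathbf z)=\langle\mathbf v_0,\mathbf z\rangle$, this forces $\Recess\cap\{x^1\le 0\}=\{\mathbf 0\}$, so $B\cap\{x^1\le a\}$ has trivial recession cone and is compact. Therefore $f^{-1}((-\infty,a])$ is compact and $f$ is proper.

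For convexity I would use the second fundamental form $\II$ of $M$ in $\M$. Since $x^1$ is affine, $\hess x^1$ vanishes identically on $\M$, so the Gauss formula gives $\hess f(\mathbf x,\mathbf x)=\langle\mathbf v_0,\II(\mathbf x,\mathbf x)\rangle$ for $\mathbf x\in T_pM$. Write $\II(\mathbf x,\mathbf x)=\sigma_p(\mathbf x,\mathbf x)\,\mathbf n_p$, where $\mathbf n_p\in\Normals$ is the inward normal at $p$ and $\sigma_p$ is the corresponding scalar second fundamental form. Two facts finish the argument. First, $\sigma_p\ge 0$: near $p$ the hypersurface $M$ is the graph over $T_pM$, in the direction of the transversal vector $\mathbf n_p$, of an affinely convex function whose epigraph is locally $B$, and differentiating this parametrization identifies $\sigma_p$ with that function's Hessian. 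This step is purely local and affine, so it applies equally at the points lying on the spacelike half-lines of Theorem \ref{thm:wu2}(\ref{halfline}), where no graph over $T_{\mathbf 0}M$ is available. Second, $\langle\mathbf v_0,\mathbf n_p\rangle\ge 0$, since $\mathbf v_0\in\Recess=\Normals^*$ and $\mathbf n_p\in\Normals$. Hence $\hess f(\mathbf x,\mathbf x)\ge 0$, so $f$ is convex. If $M$ is strongly strictly convex then $\sigma_p$ is positive definite everywhere; moreover $M$ then contains no straight half-line (such a half-line would be a geodesic of $M$ that is a line of $\M$, so $\II$, hence $\sigma_p$, would vanish on its direction), so by Theorem \ref{thm:wu2}(\ref{halfline}) the convex set $D=\Pi(M)$ is open and $M$ is the graph of the convex function $u$ over all of $\inte D$; there $p+t\mathbf v_0\in\inte B$ for $t>0$, whence $\langle\mathbf v_0,\mathbf n_p\rangle>0$, and therefore $\hess f(\mathbf x,\mathbf x)>0$ and $f$ is strictly convex. (Over $\inte D$ one may instead identify $f$ with $u$ in the graph parametrization and compute directly that $\hess f=\hess u/\langle\mathbf n,\mathbf n\rangle$, with $\langle\mathbf n,\mathbf n\rangle>0$ as $M$ is timelike.)

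I expect the convexity step to be the main obstacle: one must check that the scalar second fundamental form of $M$ with respect to the inward normal is positive semidefinite in the Minkowski setting, not only at points where $M$ is visibly a graph over $T_{\mathbf 0}M$ but also at the points on its spacelike half-lines, and it is precisely there that the local affine description of a convex hypersurface does the work. A secondary technical point is properness, which hinges on the strict positivity of $\langle\mathbf v_0,\cdot\rangle$ on the nonzero part of $\Recess$; establishing this uses both that $M$ contains no line and the cone-duality results of Section \ref{sec:dualcones} together with Lemma \ref{thm:wu1}.
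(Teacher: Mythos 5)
Your proof is correct and, at its core, takes the same route as the paper: both set $f=x^1|_M=\langle\cdot,\mathbf v_0\rangle$ with $\mathbf v_0\in(\inte\Normals)\cap\Recess$ from Lemma \ref{lem:RN}, and both reduce convexity to the factorization $\hess f(\mathbf x,\mathbf x)=\sigma_p(\mathbf x,\mathbf x)\,\langle \mathbf n_p,\mathbf v_0\rangle$ with $\sigma_p\ge 0$ by convexity of $B$ and $\langle \mathbf n_p,\mathbf v_0\rangle\ge 0$ because $\mathbf v_0\in\Recess=\Normals^*$ (the paper phrases this via the geodesic acceleration $\gamma''=\II(\gamma',\gamma')$ rather than via $\II$ directly, which is the same computation). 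You diverge in two secondary steps. For properness, the paper argues by contradiction from Theorem \ref{thm:wu2}(\ref{level}): a noncompact sublevel $M_a$ would project to a noncompact closed convex set with noncompact boundary $\Pi(L_a)$, contradicting $L_a\cong S^{n-1}$; you instead show directly that $\langle\mathbf v_0,\cdot\rangle$ is strictly positive on $\Recess-\{\mathbf 0\}$ (via $V=\M$, pointedness of $\Recess$, and $\inte\Normals=\inte(\Recess^*)$), so that $B\cap\{x^1\le a\}$ has trivial recession cone and is compact. Both work; yours is a bit longer but makes the mechanism (why the function cannot ``flatten out'' along the recession cone) explicit. For strictness, the paper gets $\langle \mathbf n_p,\mathbf v_0\rangle>0$ from openness of the normal cone together with Proposition \ref{dual_props}(6), whereas you get it from $p+t\mathbf v_0\in\inte B$ after ruling out half-lines in $M$; again both are valid, and your observation that $\sigma_p\ge 0$ must be checked by the local affine graph description even over $D-\inte D$ is a point the paper passes over silently.
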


\begin{proof}
Consider coordinates on $\M$, projection 
$ \Pi : \M \to T_{\mathbf 0} M $, and $ D = \Pi(M) $ as in Theorem \ref{thm:wu2}. Set $ f = x^1 |M = \langle \cdot, \mathbf v _0 \rangle $. 

Let $ \gamma : (a,b) \to M $ be a geodesic of $M$, and $ N : M \to \M $ be the unit normal field on $M$ with $ N_p \in \Normals $ for all $p \in M$.

The acceleration of $\gamma$ in $\M$ can be written as $ \gamma''(t) = \langle \gamma''(t), N_{\gamma(t)} \rangle N_{\gamma(t)} $, so $ (f \circ \gamma)''(t) = \langle \gamma''(t), \mathbf v_0 \rangle = \langle \gamma''(t), N_{\gamma(t)} \rangle \langle N_{\gamma(t)}, \mathbf v_0 \rangle $. Since $ M$ is convex, the acceleration must be an inward normal vector at each point along $\gamma$, in the sense that $ \langle \gamma''(t), N_{\gamma(t)} \rangle \geq 0$. Additionally, since $ \mathbf v _0 \in \Recess $ and $ N_{\gamma(t)} \in \Normals $, $\langle N_{\gamma(t)}, \mathbf v_0 \rangle \geq 0$. Thus, $f$ is convex.

If $M$ is 
strongly
strictly convex, then $\gamma''(t) \neq 0$ and $\langle \gamma''(t), N_{\gamma(t)} \rangle > 0$ along $\gamma$. Moreover, the image of the Gauss map in the associated Euclidean space, and consequently its normal cone, is open and contains none of its boundary points. If $\langle N_{\gamma(t)}, \mathbf v _0 \rangle = 0$ for some $t$, then by $(6)$ of Proposition \ref{dual_props}, $\mathbf v _0$ is in the boundary of the normal cone, a contradiction, so $ \langle N_{\gamma(t)}, \mathbf v _0 \rangle > 0 $ along $\gamma$. We conclude that if $M$ is strongly
strictly convex, then $(f \circ \gamma)''(t) > 0$ along any non-constant geodesic, i.e. $f$ is strictly convex.

Finally we show $f$ is proper. Otherwise, there is some sublevel $ M_a = \{ p \in M : f(p) \leq a \} $ that is not compact. Then $ \Pi(M_a) $ is a noncompact closed convex subset of
$D$ and has noncompact boundary $ \partial \Pi(M_a) = \Pi(\partial M_a) = \Pi(L_a) $, contradicting compactness of $L_a$. \end{proof}

Now we are ready to prove Theorem \ref{thm:main}, which states that  a timelike convex hypersurface $M$ of $\M$ is  geodesically connected if, after splitting off a semi-Euclidean factor of maximal dimension, 
$M$ contains no rolled Euclidean half-plane.
 
 The proof is divided into two cases, depending on whether the timelike convex hypersurface $M$ of $\M$ is  not or is ruled by parallel  null lines.  In the latter case, we use the following criterion of Bartolo, Candela, and Flores, the proof of which  uses infinite dimensional variational methods \cite{bcf}.

\begin{thm}\cite[Theorem 1.2]{bcf} \label{thm:null-killing}
Let $(M,\langle \cdot, \cdot \rangle_M)$ be a globally hyperbolic space-time endowed with a complete \emph{null} Killing vector field $K$ and a complete (smooth, spacelike) Cauchy hypersurface $S$. 
Then $M$ is geodesically connected if and only if 
any two points of $M$ are joined by a $C^1$ curve $\varphi$ in $M$ such that $\langle \varphi', K \circ \varphi \rangle_M$ either has constant sign or vanishes identically.
\end{thm}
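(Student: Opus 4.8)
\emph{Proof strategy.}
The plan is to dispatch necessity by a one-line conservation argument and to obtain sufficiency by the infinite-dimensional variational scheme of \cite{bcf}. For necessity, if $z:[0,1]\to M$ is a geodesic joining $p$ and $q$, then the conjugate momentum of the Killing flow is conserved:
\[
\frac{d}{d\tau}\langle z',K\circ z\rangle
=\langle\nabla_{z'}z',K\rangle+\langle z',\nabla_{z'}K\rangle=0,
\]
the first term vanishing by the geodesic equation and the second because $(X,Y)\mapsto\langle\nabla_XK,Y\rangle$ is antisymmetric by the Killing equation. A constant function has constant sign or is identically zero, so $\varphi=z$ is an admissible connecting curve.

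For sufficiency, first I would extract the structural consequences of the hypotheses. Since $K$ is a complete null Killing field its integral curves are complete null geodesics, and since $S$ is a Cauchy hypersurface each meets $S$ exactly once; hence the $K$-flow identifies $M$ with $S\times\R$ and $K$ with $\partial_s$. From $\mathcal L_Kg=0$ and $\langle K,K\rangle\equiv0$ it follows that in these coordinates the metric is independent of $s$, that $g(\partial_s,\partial_s)=0$, that $\theta:=\langle K,\cdot\rangle$ is $s$-independent and nowhere zero on $S$ (no nonzero null vector is orthogonal to an entire tangent space), and that $g$ is positive definite on the screen distribution $\ker\theta\subset TS$; global hyperbolicity in addition provides a Cauchy temporal function that controls the longitudinal variable. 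By the computation above the momentum $c_z:=\langle z',K\rangle$ of a geodesic is constant, so a geodesic lies in exactly one of the regimes $c_z>0$, $c_z<0$, $c_z\equiv0$; the hypothesis on $\varphi$ is precisely what makes the relevant value of $c$ compatible with the displacement of $p$ and $q$ along the $K$-orbits.

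The heart of the matter is to produce, for each admissible regime, a geodesic from $p$ to $q$ realizing it; I would treat $c>0$ in detail (the regime $c<0$ follows by time-reversal and $c\equiv0$ is a degenerate variant of the same argument). On the Hilbert manifold $\Omega^1_{p,q}(M)$ of $H^1$ curves from $p$ to $q$, the geodesic action $\mathcal A(z)=\int_0^1\langle z',z'\rangle\,d\tau$ has the geodesics as critical points but is strongly indefinite because of the null direction $K$; the device is to eliminate that direction. Curves stationary for $\mathcal A$ in the $s$-variable are exactly those with $\theta(z')$ constant, and prescribing that constant equal to the value $c>0$ forced by the boundary data turns $\mathcal A$, via a Fermat/Maupertuis-type reduction, into a functional $J_c$ on the space of curves joining the projections of $p$ and $q$ in the quotient $M/K\cong S$, with $J_c$ now bounded below and coercive for the complete Riemannian metric on $S$. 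One then verifies the Palais--Smale condition for $J_c$ and applies the direct method (a minimax principle if one also wants the geodesic count implicit in \cite{bcf}); undoing the reduction, a critical point of $J_c$ lifts to a geodesic $z$ of $M$ with $z(0)=p$, $z(1)=q$ and $\langle z',K\rangle\equiv c>0$.

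The hard part will be the Palais--Smale condition and the coercivity needed to run the direct method on $J_c$, and this is exactly where the three hypotheses are genuinely used: completeness of the Cauchy hypersurface $S$ keeps minimizing sequences from escaping to infinity in the transverse directions, completeness of $K$ guarantees that the reduction produces curves defined on all of $[0,1]$, and global hyperbolicity --- through the compactness of the causal diamonds $J^+(p)\cap J^-(q)$ --- confines the relevant part of $M$ and blocks the loss of compactness responsible for the failure of geodesic connectedness in examples such as de Sitter space, which carries no null Killing field. Assembling these estimates is the technical core of \cite{bcf}.
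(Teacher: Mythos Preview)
This theorem is not proved in the paper; it is quoted verbatim from \cite{bcf} and used as a black box (the paper says just before stating it: ``the proof of which uses infinite dimensional variational methods \cite{bcf}''). So there is no in-paper argument to compare against.

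Your necessity argument is correct and standard: the Killing conservation law makes $\langle z',K\circ z\rangle$ constant along any geodesic, so a connecting geodesic already serves as the required $\varphi$.

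Your sufficiency sketch is in the right spirit --- it is the infinite-dimensional variational approach the paper alludes to --- but as written it is a strategy rather than a proof, and you essentially concede this by deferring the Palais--Smale and coercivity estimates to ``the technical core of \cite{bcf}''. A few points where the sketch would need tightening before it stands on its own: the identification $M\cong S\times\R$ via the $K$-flow and the resulting form of the metric require more care (in \cite{bcf} the authors work with a specific normal-form metric, and the reduction to a functional on paths in $S$ is not quite a Fermat/Maupertuis substitution but a constrained variational problem tailored to the null character of $K$); the case $c\equiv 0$ is not merely a ``degenerate variant'' but genuinely different, since the reduced functional loses coercivity in the $K$-direction and one must argue separately; and the way the sign hypothesis on $\langle\varphi',K\rangle$ enters the argument is through showing that the admissible constraint set is nonempty, which you state but do not verify. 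None of this is wrong --- it is a fair roadmap --- but it does not replace the analysis in \cite{bcf}, and the paper under review makes no attempt to reproduce that analysis either.
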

 
\vspace*{3mm}

\noindent\emph{Proof of  Theorem \ref{thm:main}.} 
Let $k$ be the maximal dimension of a nondegenerate $k$-plane $P$ contained in $M$. Then $M$ contains through every point a translate of $P$. Identifying $P$ with a coordinate subspace of $\M$, we have $M=M_0\times P$, where $M$ is embedded in $\M=P^\perp\times P$ as the product of a hypersurface embedding of $M_0$ in $P^\perp$ and the identity map of $P$. Thus $M$ is geodesically connected if and only if $M_0$ is geodesically connected. 
If $P^\perp$ is Euclidean, then $M_0$ is Riemannian and complete, hence geodesically connected.
Thus we need only consider timelike convex hypersurfaces $M$ of\, $\M$ that are not ruled by parallel timelike or spacelike lines.
\vspace*{3mm}

\noindent{\bf Case 1.}
\emph{Suppose $M$ is not ruled by parallel null lines.}

By Lemma \ref{lem:weak-strong}, there is a point $p\in M$ of strict convexity. 

Thus we may take $M$ as described in Theorem \ref{thm:wu2}. Consider the proper convex function $ f : M \to \R $ given by $ f = x^1 |M $, as in Theorem \ref{thm:support-convex}.

\setcounter{claim}{0}
\begin{claim}\label{critical-orientable}
Any two points of the critical set $C$ of $f$ are joined by a geodesic of $M$, and $C$ has an oriented neighborhood in $M$. 
\end{claim}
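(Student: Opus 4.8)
The plan is to understand the critical set $C$ of $f = x^1|M$ explicitly, using the geometry from Theorem \ref{thm:wu2}. Recall that $f = \langle \cdot, \mathbf v_0\rangle$ where $\mathbf v_0 \in (\inte\Normals)\cap\Recess$ is the chosen inward normal at the origin. A point $p \in M$ is critical for $f$ precisely when $\mathbf v_0$ is normal to $M$ at $p$, i.e. when $\gamma''$-type reasoning from the proof of Theorem \ref{thm:support-convex} shows $(f\circ\gamma)''$ can vanish in every direction; concretely, $dp \in L_a$ is critical iff the tangent hyperplane $T_pM$ is parallel to $T_{\mathbf 0}M = \{x^1 = 0\}$. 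Since in the associated Euclidean picture $M$ is a graph $x^1 = u$ over $\inte D$ (part (\ref{graph})), the critical set of $f$ corresponds to the critical set of the convex function $u$, which is a convex subset of $T_{\mathbf 0}M$ — namely the (possibly lower-dimensional) face where $u$ attains its minimum, sitting at height $x^1 = 0$ inside $L_0 = M \cap \{x^1 = 0\}$.

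First I would show $C$ is a (closed) convex subset of the affine hyperplane $\{x^1 = 0\}$: since $u$ is convex on $\inte D$, its minimum set is convex, and $C$ is the image of that set under the graph map, hence convex (note $f \geq 0$ with $f = 0$ exactly on $C$, by the normalization in Theorem \ref{thm:wu2} that makes the origin have minimal height; if not automatically zero, translate coordinates). A convex set in affine space is geodesically connected by straight segments, and each such segment lies in $\{x^1=0\}$; but we need the segment to lie in $M$. Here I would invoke part (\ref{halfline})/(\ref{graph}): points of $C$ lie over $\inte D$ where $M$ is exactly the graph, and the graph of a convex function restricted to the flat bottom face is the flat face itself — so the Euclidean segment between two points of $C$ lies in $M$. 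Moreover such a segment is a geodesic of $M$: it is a straight line in $\M$ contained in $M$, hence has zero acceleration in $\M$, hence (being tangent to $M$) zero acceleration in $M$. This gives the first assertion.

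For the orientability of a neighborhood of $C$: since $M$ is a hypersurface of $\M$ with a globally defined unit normal field $N$ taking values in $\Normals$ (as used in Theorem \ref{thm:support-convex}), $M$ is two-sided in $\M$, hence orientable as a whole — so certainly $C$ has an oriented neighborhood. Alternatively, and perhaps more in the spirit of the paper's later degree arguments, one can note that near $C$ the hypersurface $M$ is the graph of $u$ over a neighborhood in $T_{\mathbf 0}M \cong \R^n$, and a graph over an oriented open set is canonically oriented. I would state it via the global unit normal to keep it short.

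The main obstacle I anticipate is the bookkeeping needed to be sure that $C$ actually sits over $\inte D$ (not on the boundary part described in (\ref{halfline})) and that the minimum value of $f$ on $M$ equals its value on $L_0$ — in other words, reconciling the possibility, flagged in the Remark after Theorem \ref{thm:wu2}, that the origin need not be a point of strict convexity, so $L_0$ may be larger than a point. One must check that a minimizing point of $u$ cannot escape to $\partial D$: if it did, the half-line fiber from (\ref{halfline}) would force $f = x^1$ to be constant and equal to the minimum along an entire spacelike half-line of $\M$, which is a non-constant complete geodesic on which $f$ is constant. Under the hypothesis of Theorem \ref{thm:main} (after splitting off the maximal semi-Euclidean factor, no rolled Euclidean half-plane, and $M$ not ruled by parallel spacelike/timelike lines), such a configuration is excluded — but one has to trace exactly which earlier reduction rules it out. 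Handling this edge case cleanly is where the real work lies; the convexity and orientability statements themselves are then routine.
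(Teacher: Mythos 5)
Your argument is correct and essentially the paper's: identify $C$ as the flat convex set $B\cap\{x^1=0\}$ (the points where the support hyperplane of $B$ is parallel to $T_{\mathbf 0}M$), join its points by straight segments, which lie in $M$ and are therefore geodesics of $M$, and orient a neighborhood of $C$ via the graph/projection structure over $T_{\mathbf 0}M$. The edge case you worry about is moot: since $\mathbf v_0\in\Recess$ forces every support hyperplane of the form $\{x^1=c\}$ to satisfy $c=0$, one gets $C=B\cap\{x^1=0\}$ directly, and convexity of $B$ puts the entire segment between two points of $C$ inside $C\subset M$, with no need to distinguish $\inte D$ from $\partial D$. (Note, though, that your proposed resolution of that edge case is backwards: along a half-line fiber over $\partial D$ the function $f=x^1$ is strictly increasing, not constant --- but nothing in the claim requires it.)
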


The critical points of $ f = x^1 |M $ are the points at which the $n$-plane tangent to $M$ has the form $x^1=c$. Since $M$ bounds a convex body $B$, it follows that $C=M\cap \{x^1=0\}=B\cap \{x^1=0\}$, and $C$ is a compact convex set. 
 
A sufficiently small neighborhood of $C$ in $M$ is diffeomorphic by projection $\Pi$ to a neighborhood of $C$ in $T_{\mathbf 0}M$, and hence is oriented. 

\begin{claim}
For some non-critical value $a$, there is $p\in M_a-L_a$ such that the map 
$\psi_{(p,a)}\,:\,S_p\,M\to L_a$ has nonzero degree, where 
$L_a =M \cap \{ x^1 = a \}$, $M_a=M \cap \{ x^1\le a \} $, $SM$ is the unit tangent bundle of $M$ with respect to some choice of Riemannian metric, and $\psi_{(p,a)}(\mathbf v)$ is the first point at which the geodesic of $M$ with initial velocity $\mathbf v\in S_pM$ leaves $M_a$. \end{claim}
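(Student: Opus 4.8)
\noindent\emph{Proof plan.} The plan is to take $p$ to be the origin $\mathbf 0$ — the point of $M$ at which the chosen tangent hyperplane is $T_{\mathbf 0}M=\{x^1=0\}$ — and to let $a>0$ be any non-critical value of $f=x^1|M$; then $\mathbf 0\in C=f^{-1}(0)\subseteq M_a-L_a$. I would begin by recording the relevant topology. By Theorem~\ref{thm:support-convex}, $f$ is proper, so $M_a$ is compact; as in the proof of Lemma~\ref{lem:semi-geo-conn} (using Claim~\ref{critical-orientable}), $M$ and hence $M_a$ is orientable; by Lemma~\ref{lem:connected-levels}, $M_a$ is connected; and by Theorem~\ref{thm:wu2}(\ref{level}), $L_a$ is homeomorphic to $S^{n-1}$. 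So $M_a$ is a compact connected oriented $n$-manifold with connected boundary $L_a$. Since $M$ is disprisoning (established from the hypothesis of Theorem~\ref{thm:main}; this is the only place that hypothesis is used), no geodesic is trapped in the compact set $M_a$; and since $f\circ\gamma$ is convex, nonnegative, and zero at $\gamma(0)=\mathbf 0$, every geodesic $\gamma$ of $M$ from $\mathbf 0$ stays in $M_a-L_a$ up to a first parameter $T(\mathbf v)<\infty$, where it meets $L_a$ transversely by Lemma~\ref{lem:transverse}. Thus $\psi_{(\mathbf 0,a)}\colon S_{\mathbf 0}M\to L_a$ is smooth, and $T$ is continuous on $S_{\mathbf 0}M$, with $0<\delta\le T\le\Delta<\infty$.

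Next I would reduce the degree of $\psi_{(\mathbf 0,a)}$ to a relative degree over a disk. Let $D\subseteq T_{\mathbf 0}M$ be the closed unit ball of the chosen Riemannian metric, so $\partial D=S_{\mathbf 0}M$, and let $\exp_{\mathbf 0}$ be the exponential map of $M$. Choose a continuous $\rho\colon D\to[0,\infty)$, written $\rho(s\mathbf v)$ for $\mathbf v\in S_{\mathbf 0}M$ and $0\le s\le1$, nondecreasing in $s$, with $\rho(0)=0$, with $\rho(\mathbf v)=T(\mathbf v)$ on $S_{\mathbf 0}M$, and with $\rho(s\mathbf v)=cs$ for all $s$ near $0$, for fixed small $c>0$ (possible because $T$ is bounded away from $0$ and $\infty$). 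Set $\bar\Psi\colon D\to M_a$, $\bar\Psi(s\mathbf v)=\exp_{\mathbf 0}\!\bigl(\rho(s\mathbf v)\,\mathbf v\bigr)$; since $0\le\rho(s\mathbf v)\le T(\mathbf v)$, this lands in $M_a$, and $\bar\Psi|_{\partial D}=\psi_{(\mathbf 0,a)}$ maps into $L_a$, so $\bar\Psi$ is a map of pairs $(D,\partial D)\to(M_a,L_a)$. In the homology exact sequence of $(M_a,L_a)$ we have $H_n(M_a)=0$ (a compact connected manifold with nonempty boundary), so the connecting map $\partial\colon H_n(M_a,L_a)\to H_{n-1}(L_a)$ is injective, while $H_n(M_a,L_a)\cong\Z$; naturality of $\partial$ then yields $\deg\psi_{(\mathbf 0,a)}=\pm\deg\bar\Psi$, where $\deg\bar\Psi$ is the integer with $\bar\Psi_*[D,\partial D]=\deg\bar\Psi\cdot[M_a,L_a]$. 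So it suffices to show $\deg\bar\Psi\ne0$.

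The crux is that \emph{no non-constant geodesic of $M$ issuing from $\mathbf 0$ returns to $\mathbf 0$}. Indeed, if $\gamma(0)=\mathbf 0=\gamma(t_*)$ with $t_*>0$, then $f\circ\gamma$ is convex, $\ge0$, and vanishes at $0$ and $t_*$, hence vanishes identically on $[0,t_*]$, so $\gamma([0,t_*])\subseteq C\subseteq\{x^1=0\}$. At each $q\in C$, the hyperplane $\{x^1=0\}$ is the unique supporting hyperplane of the convex body bounded by $M$, so $T_qM=\{x^1=0\}$; since this hyperplane is timelike, hence nondegenerate, a geodesic of $M$ lying in it has vanishing acceleration in $\M$, so $\gamma|[0,t_*]$ is an affine segment — impossible since $\gamma(0)=\gamma(t_*)$ and $\gamma'(0)\ne\mathbf 0$. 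Granting this, pick $q\in\inte M_a$ so near $\mathbf 0$ that it lies in the neighbourhood onto which $\bar\Psi$ maps the sub-ball $\{s<s_0\}$ diffeomorphically (there, for $c$ and $s_0$ small, $\bar\Psi=\exp_{\mathbf 0}\circ(\text{dilation by }c)$, a diffeomorphism at the centre). Then $\bar\Psi^{-1}(q)\cap\{s<s_0\}$ is a single regular point, and $\bar\Psi^{-1}(q)\cap\{s\ge s_0\}$ is empty: a sequence of such preimages whose images tend to $\mathbf 0$ would, by compactness, subconverge to a point mapped to $\mathbf 0$, i.e.\ to a non-constant geodesic from $\mathbf 0$ returning to $\mathbf 0$. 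Hence $q$ is a regular value of $\bar\Psi$ with a single preimage at which $\bar\Psi$ is a local diffeomorphism, so $\deg\bar\Psi=\pm1\ne0$, and therefore $\deg\psi_{(\mathbf 0,a)}\ne0$, which is the claim.

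I expect the main obstacle to be organizational rather than conceptual. The map $\psi_{(\mathbf 0,a)}$ is in general neither injective nor a local diffeomorphism — geodesics from $\mathbf 0$ can cross once they leave the flat face $C$, and when $\dim C>0$ one cannot move the origin to make $C$ a point — so its degree cannot be computed directly as in the proof of Theorem~\ref{thm:gordon-semi}; the disk-extension $\bar\Psi$ together with the no-return property is exactly the substitute. The steps requiring care are pinning down the topology of the pair $(M_a,L_a)$ and confirming disprisonment — hence the finiteness and transversality of the exit parameters $T(\mathbf v)$ — which is precisely the role of the no--rolled--Euclidean--half-plane hypothesis in Theorem~\ref{thm:main}.
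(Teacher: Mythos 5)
Your argument is correct, but it is not the route the paper takes. The paper's proof fixes $p\in C$ and takes $a$ \emph{small}: since $M_a$ is then an arbitrarily thin collar of the compact convex set $C$ (on which the geodesics of $M$ from $p$ coincide with straight lines of the hyperplane $\{x^1=0\}$, because $T_qM=\{x^1=0\}$ for $q\in C$), and since the straight-line exit map from $S_pM$ onto the boundary $\Pi(L_a)$ of the convex set $\Pi(M_a)$ has degree $1$, a continuity/perturbation argument shows each geodesic of $M$ from $p$ strikes $L_a$ transversely and $\psi_{(p,a)}$ has degree $1$ for $a$ sufficiently small. Notably, the paper's version needs no disprisonment input, since the exit of every geodesic from $M_a$ is part of what the perturbation argument delivers; this keeps the three claims of Case 1 logically independent. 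Your version proves the stronger statement that the degree is $\pm1$ for \emph{every} non-critical $a$, replacing the "small $a$" perturbation with a clean homological computation: extend $\psi_{(\mathbf 0,a)}$ to a map of pairs $(D,\partial D)\to(M_a,L_a)$, identify $\deg\psi$ with the relative degree via the connecting homomorphism (injective because $H_n(M_a)=0$), and count preimages of a point near $\mathbf 0$ using the no-return property of geodesics issuing from the flat critical set $C$ — the same flatness fact the paper uses, deployed differently. The cost is that you must import disprisonment (the paper's third claim) before proving this one; that is a legitimate reordering since the disprisonment argument nowhere uses the degree claim, and you flag it, but if this were inserted into the paper the claims would need to be permuted. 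Two small points of rigor worth tightening: the identity $\deg\psi=\pm\deg\bar\Psi$ via local degree at a single preimage only needs $\bar\Psi$ continuous and a local homeomorphism near that preimage, which your construction supplies since $\bar\Psi=\exp_{\mathbf 0}\circ(\text{dilation by }c)$ near the center; and the existence of a point $q$ that is simultaneously in the diffeomorphic image of $\{s<s_0\}$ and outside the compact set $\bar\Psi(\{s\ge s_0\})$ follows because the latter is closed and misses $\mathbf 0$ by the no-return property.
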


Choose $p\in C$. Since $\Pi(M_a)$ is convex in $\{x^1=0\}$, each geodesic in $\{x^1=0\}$ from $p$ strikes $\Pi(L_a)$ transversely, and the corresponding map from $S_pM$ to $\Pi(L_a)$ has degree 1. In $\{x^1=0\}$ and in $M$ respectively, the vectors in $SM$ tangent to geodesics from $p$ agree on $C$. Since $C$ is compact and $M_a$ is arbitrarily close to $C$ for $a$ sufficiently small, it follows that each geodesic in $M$ from $p$ strikes $L_a$ transversely when $a$ is sufficiently small, and $\psi_{(p,a)}$ has degree $1$.

\begin{claim}\label{geodesic-leaves-sublevel} 
$M$ is disprisoning.
\end{claim}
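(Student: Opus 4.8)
The plan is to prove the contrapositive: if $M$ fails to be disprisoning, then $M$ contains a rolled Euclidean half-plane, contradicting the hypothesis of Theorem \ref{thm:main}. First set up Lemma \ref{lem:exit-or-level}. Since $M$ is an embedded timelike submanifold of the strongly causal space $\M$, it is itself strongly causal by Lemma \ref{lem:strong-causal-subman} (taking a point as the Riemannian factor), hence null-disprisoning by \cite[Proposition 3.13]{bee}. The function $f = x^1|M = \langle\cdot,\mathbf v_0\rangle$ of Theorem \ref{thm:support-convex} is proper, nonnegative and convex, and its critical set $C = M\cap\{x^1=0\}$ is compact. Thus Lemma \ref{lem:exit-or-level} gives one of two alternatives: either $M$ is disprisoning, in which case we are done, or there is a complete non-constant geodesic $\gamma:\R\to M$ with $f\circ\gamma\equiv c$ for some $c\ge 0$. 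Assume the latter. Then $\gamma$ lies in $L_c$ (when $c>0$; homeomorphic to $S^{n-1}$ by Theorem \ref{thm:wu2}(\ref{level})) or in $C$ (when $c=0$), so $\gamma$ has bounded image in $\M$; moreover $\gamma$ must be spacelike, since an inextendible causal geodesic cannot be imprisoned in a compact set in the strongly causal space $M$.

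Next I locate where $\gamma$ lives. Because $\gamma$ is bounded and non-constant it is not a straight line of $\M$, so, writing its $\M$-acceleration as $\gamma''(t) = \langle\gamma''(t),N_{\gamma(t)}\rangle\,N_{\gamma(t)}$ with $N$ the inward unit normal along $M$ (so $N_p\in\Normals$), we have $\langle\gamma''(t_*),N_{\gamma(t_*)}\rangle > 0$ for some $t_*$; recall $\langle\gamma'',N\rangle\ge 0$ by convexity of $M$. On the other hand $f\circ\gamma\equiv c$ forces $0 = (f\circ\gamma)''(t) = \langle\gamma''(t),\mathbf v_0\rangle = \langle\gamma''(t),N_{\gamma(t)}\rangle\,\langle N_{\gamma(t)},\mathbf v_0\rangle$, while $\langle N_{\gamma(t)},\mathbf v_0\rangle\ge 0$ because $\mathbf v_0\in\Recess = \Normals^*$ (Lemmas \ref{lem:RN} and \ref{lem:dual}); hence $\langle N_{\gamma(t)},\mathbf v_0\rangle = 0$, i.e. $\mathbf v_0\in T_{\gamma(t)}M$, at every $t$ with $\langle\gamma''(t),N_{\gamma(t)}\rangle>0$. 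Along $C$ the inward normal equals $\mathbf v_0$, so $\langle N,\mathbf v_0\rangle\equiv 1$ there; this excludes $c=0$. For $c>0$, the part of $M$ over $\inte D$ is the single-valued graph $x^1=u$ of Theorem \ref{thm:wu2}(\ref{graph}), on which $\Pi$ restricts to an isomorphism of each tangent plane, whereas $\Pi\,\mathbf v_0 = 0$ (as $\mathbf v_0$ spans $\ker\Pi = (T_{\mathbf 0}M)^\perp$); hence $\mathbf v_0$ is never tangent to this graph part. Therefore $\gamma(t)$, for every $t$ with $\langle\gamma''(t),N_{\gamma(t)}\rangle>0$, lies on one of the half-lines $M\cap\Pi^{-1}(q)$, $q\in\partial D$, which by Theorem \ref{thm:wu2}(\ref{halfline}) are closed spacelike half-lines, each contained in $q+\R\mathbf v_0 = \Pi^{-1}(q)$ and opening in the $+\mathbf v_0$ direction (since $f\ge 0$) — in particular all parallel to $\mathbf v_0$.

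Finally I assemble a rolled Euclidean half-plane. Whenever $\gamma(t)$ lies on such a half-line, so does $\gamma(t)+y\mathbf v_0$ for all $y\ge 0$; thus $E = \{\gamma(t)+y\mathbf v_0 : t\in\R,\ y\ge 0\}\subseteq M$, and, parametrizing $\gamma$ by $\langle\cdot,\cdot\rangle$-arclength and using $\langle\gamma',\mathbf v_0\rangle\equiv 0$ together with $\langle\mathbf v_0,\mathbf v_0\rangle = 1$, $E$ is an isometrically immersed Euclidean half-plane whose $x$-lines are parallel half-lines and whose edge $\gamma$ lies in the compact set $L_c$ — that is, a rolled Euclidean half-plane, contradicting the hypothesis and completing the proof of the claim. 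The hard part is precisely the phrase ``whenever $\gamma(t)$ lies on such a half-line'': a priori $\gamma$ need only meet the boundary half-lines on the closed set $\{\langle\gamma'',N\rangle>0\}$, and on the complementary open set (where $\gamma''\equiv 0$) $\gamma$ is a straight segment that may traverse a horizontal chord of the graph over $\inte D$, where $\gamma(t)+y\mathbf v_0$ leaves $M$. One must therefore show that $\gamma$ is in fact supported entirely on $M\cap\Pi^{-1}(\partial D)$ — ruling out such chords using the smoothness of $\gamma$ at the transition parameters together with the absence of lines in $M$ (Lemma \ref{lem:weak-strong}) — and then verify that the half-plane $E$ so obtained is genuinely flat with bounded edge. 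This is where the ``no rolled Euclidean half-plane'' hypothesis is really consumed, and it is the main obstacle in the argument.
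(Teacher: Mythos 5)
Your argument tracks the paper's proof closely through the reduction via Lemma \ref{lem:exit-or-level}, the exclusion of causal and straight-line possibilities for $\gamma$, and the key observation that $(f\circ\gamma)''=0$ forces $\langle N_{\gamma(t)},\mathbf v_0\rangle=0$ wherever $\gamma''(t)\ne\mathbf 0$, so that at such $t$ the fibre $M\cap\Pi^{-1}(\Pi(\gamma(t)))$ is a vertical half-line. But at the decisive step you stop: you say yourself that extending the vertical half-lines across the open set where $\gamma''\equiv\mathbf 0$ (so that $E=\{\gamma(t)+y\mathbf v_0\}$ really lies in $M$) is ``the main obstacle in the argument,'' and you only gesture at ``smoothness at the transition parameters'' and ``absence of lines'' without carrying out any argument. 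That is a genuine gap, and the tools you name are not the right ones: a flat segment of $\gamma$ is a chord, not a line, so Lemma \ref{lem:weak-strong} does not exclude it, and smoothness at the endpoints of a flat interval gives no control over where the interior of that segment projects.

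The paper closes this gap with a convexity argument. Let $J=\{t:\gamma''(t)\ne\mathbf 0\}$ (nonempty and open) and let $I$ be a maximal open interval of $\R-\cl J$; it is finite because $\gamma(\R)$ lies in the compact set $L_a$. At an endpoint $t$ of $I$, the supporting hyperplane $T_{\gamma(t)}M$ contains the straight segment $\gamma(I)$ (its direction is $\gamma'(t)$) and also, since $t\in\cl J$, the vertical half-line above $\gamma(t)$ (its direction is $\mathbf v_0$, which lies in $T_{\gamma(t)}M$ by continuity). Hence the entire vertical strip above $\gamma(I)$ lies in the convex set $B\cap T_{\gamma(t)}M$, which is contained in $M$ because $T_{\gamma(t)}M$ is a support hyperplane of $B$. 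Combining these strips with the half-lines over $\cl J$ yields the forbidden rolled Euclidean half-plane. Your flatness verification for $E$ (constant-speed spacelike $\gamma$, $\langle\gamma',\mathbf v_0\rangle\equiv 0$, $\mathbf v_0$ spacelike, edge in the compact $L_a$) is correct once $E\subset M$ is established, but without the convexity step the proof is incomplete.
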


Since $M$ is a topologically embedded timelike submanifold of $\M$ and $\M$ is strongly causal, 
$M$ is strongly causal, as follows for instance from Lemma \ref{lem:strong-causal-subman}. Therefore the claim holds for non-spacelike geodesics \cite[Proposition 3.13]{bee}.

Then $M$ satisfies the hypotheses of Lemma \ref{lem:exit-or-level}. 
Accordingly if the claim fails there is a complete non-constant geodesic $\gamma:\R\to M$ such that $f\circ\gamma $ is constant. $\gamma$ does not lie in the critical set $C$, since $C$ is compact and the geodesics of $C$ run on straight lines in $T_{\mathbf 0}M$. Therefore $\gamma$ lies in a level set $L_a$. Since $L_a$ is compact 
and $M$ is 
disprisoning on non-spacelike geodesics,
 it follows that $\gamma$ is spacelike.

Moreover, $\gamma$ cannot be a complete straight line. Let $J$ be the nonempty open subset of $\R$ consisting of all $t$ for which $\gamma''(t)\ne \mathbf 0$. For $t\in J$,
$$\gamma''(t)\,\in\, \{ x^1 = a \} \,\cap\,
(T_{\gamma(t)}M)^{\perp}.$$ Thus $T_{\gamma(t)}M$ is vertical, i.e. in the notation of Theorem \ref{thm:wu2}\,(\ref{graph}), $$(\Pi\circ\gamma)|\,J\ \subset \,D- \inte D.$$ By Theorem \ref{thm:wu2}\,(\ref{halfline}), $M\cap \Pi^{-1}(\gamma(t))$ contains the closed vertical half-line with endpoint $\gamma(t)$ and lying above $\gamma(t)$, for all $t$ in the closure of $J$, $\,\cl J$. 

Let $I$ be any maximal nonempty open subinterval of $\R-\cl J$. By compactness
of $L_a$,
$I$ is a finite interval. If $t$ is an endpoint of $I$, then $M\cap T_{\gamma(t)}M$ contains the line segment $\gamma(I)$ and the vertical half-line above $\gamma(t)$. Thus since $B$ is convex, the vertical planar strip above $\gamma(I)$ lies in $B\cap T_{\gamma(t)}M$, and in fact lies in $M\cap T_{\gamma(t)}M$ since $T_{\gamma(t)}M$ is a support hyperplane. It follows that 
$M$ contains a 
rolled
Euclidean half-plane,
a contradiction 
proving claim \ref{geodesic-leaves-sublevel}.

\vspace{2mm}

By Lemma \ref{lem:semi-geo-conn}, 
Case 1 
follows from these three claims.
\vspace*{3mm}

\noindent{\bf Case 2.}
\emph{Suppose $M$ is ruled by parallel null lines.}

\setcounter{claim}{0}
\begin{claim}\label{null-splitting}
 Let $\mathbf v \in S^{n-1}\subset\E^n$ and $C $ be a hypersurface of $\E^n$ 
having no tangent hyperplane orthogonal to $\mathbf v$. Define the hypersurface $L_{C,\mathbf v}$ of $\M$ by 
\begin{equation}\label{eq:L}
L_{C, \mathbf v} = \{ (p + t\mathbf v,t) \in \E^n \times (-\R) = \M : p \in C, t\in \R \}.
\end{equation}
Then $L_{C,\mathbf v}$ is a timelike hypersurface of $\M$ ruled by 
parallel
null lines. If $C$ is a convex hypersurface of $\E^n$, then $L_{C,\mathbf v}$ is a convex hypersurface of $\M$.

Furthermore, any timelike hypersurface $M$ of $\M$ that is ruled by parallel null lines can be constructed in this way, for some $\mathbf v \in S^{n-1}$ and some hypersurface $C \subseteq \E^n$ having no tangent plane orthogonal to $\mathbf v$.  $M$ is a convex hypersurface if and only if $C$ is a convex hypersurface.
\end{claim}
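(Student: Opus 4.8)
\emph{Proof proposal.}
The plan is to reduce everything to a single linear shear. Let $\Psi\colon\E^{n+1}\to\E^{n+1}$, $\Psi(\mathbf q,t)=(\mathbf q+t\mathbf v,t)$, be the vector-space automorphism underlying $\M$ (inverse $(\mathbf q,t)\mapsto(\mathbf q-t\mathbf v,t)$). Because $|\mathbf v|=1$, it sends the direction $(\mathbf 0,1)$ to the null direction $(\mathbf v,1)$, hence each vertical line $\{\mathbf p\}\times\R$ to the null line $\{(\mathbf p+t\mathbf v,t):t\in\R\}$, and therefore each cylinder $C\times\R$ onto $L_{C,\mathbf v}$. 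Since $\Psi$ is an affine isomorphism it preserves smooth embedded submanifolds, convexity, and the relation ``boundary of a convex body''. So the statement will follow once I establish (i) a one-point computation deciding when $L_{C,\mathbf v}$ is timelike, and (ii) the elementary fact that a cylinder $C\times\R$ is a (convex) hypersurface of $\E^{n+1}$ precisely when $C$ is a (convex) hypersurface of $\E^n$.

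For the first assertion I would write $L_{C,\mathbf v}=\Psi(C\times\R)$, which is thus a smooth embedded hypersurface ruled by the parallel null lines $\Psi(\{\mathbf p\}\times\R)$, and then compute the normal. At $\Psi(\mathbf p,t)$ the tangent hyperplane is spanned by the spacelike vectors $(\mathbf x,0)$, $\mathbf x\in T_{\mathbf p}C$, together with $(\mathbf v,1)$, so if $\mathbf n$ denotes the Euclidean unit normal of $C$ at $\mathbf p$ then $(\mathbf n,\langle\mathbf n,\mathbf v\rangle)$ is a Minkowski-normal vector, with Minkowski norm-squared $|\mathbf n|^2-\langle\mathbf n,\mathbf v\rangle^2=1-\langle\mathbf n,\mathbf v\rangle^2$. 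By Cauchy--Schwarz this is nonnegative and vanishes exactly when $\mathbf n=\pm\mathbf v$, i.e.\ exactly when the tangent hyperplane of $C$ at $\mathbf p$ is orthogonal to $\mathbf v$; so the hypothesis on $C$ is precisely what makes the normal spacelike everywhere, i.e.\ makes $L_{C,\mathbf v}$ timelike. If moreover $C=\partial B_0$ for a convex body $B_0$, then $C\times\R=\partial(B_0\times\R)$ and $\Psi(B_0\times\R)$ is a closed convex set with nonempty interior whose boundary is $L_{C,\mathbf v}$, so $L_{C,\mathbf v}$ is a convex hypersurface.

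For the converse I would start from a timelike hypersurface $M$ ruled by parallel null lines; after rescaling, its null direction is $(\mathbf v,1)$ with $\mathbf v\in S^{n-1}$, and then $\Psi^{-1}(M)$ is a union of vertical lines, so as a set $\Psi^{-1}(M)=C\times\R$ with $C$ the projection of $\Psi^{-1}(M)$ to $\E^n$. To see $C$ is a \emph{smooth embedded} hypersurface, I would use that the tangent hyperplane of $\Psi^{-1}(M)$ contains $(\mathbf 0,1)$ everywhere, so $\Psi^{-1}(M)$ is locally a graph $x^i=g(\hat{\mathbf x},t)$ over a coordinate hyperplane through the $t$-axis, and then invoke the $t$-translation invariance of the set $\Psi^{-1}(M)$ to conclude $g$ is independent of $t$; hence $C$ is locally a graph. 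That $C$ has no tangent hyperplane orthogonal to $\mathbf v$ then follows from the normal computation: otherwise the Minkowski normal of $M=L_{C,\mathbf v}$ would be null at the corresponding points, contradicting timelikeness. This gives $M=L_{C,\mathbf v}$. For the convexity equivalence one direction is already done; for the other, if $M=\partial B$ for a convex body $B$ then $\Psi^{-1}(B)$ is a convex body with boundary $C\times\R$, a supporting hyperplane of it at a point of $C\times\R$ must contain the $t$-direction and so has the form $H\times\R$ with $H$ a supporting hyperplane of $C$, and since $C$ is connected it is the boundary of the convex body cut out by these supporting halfspaces.

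The conceptual heart is the little identity $1-\langle\mathbf n,\mathbf v\rangle^2$ tying ``$M$ timelike'' to ``$C$ has no tangent hyperplane orthogonal to $\mathbf v$'', which is routine once the setup is in place. The parts I expect to be the real work are the two ``cylinder'' arguments in the converse: showing the projected set $C$ is genuinely a smooth embedded submanifold rather than merely a subset (where the $t$-invariance must be used to remove the $t$-dependence from a local defining graph), and, in the back direction of the convexity equivalence, passing $t$-invariance from the boundary $C\times\R$ to the convex body $\Psi^{-1}(B)$ so as to extract a convex body $B_0\subseteq\E^n$ with $\partial B_0=C$.
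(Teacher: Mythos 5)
Your proposal is correct and follows essentially the same route as the paper: the same normal field $(\mathbf n,\langle\mathbf n,\mathbf v\rangle)$ with Minkowski norm squared $1-\langle\mathbf n,\mathbf v\rangle^2$ decides timelikeness via Cauchy--Schwarz, and the converse is obtained by slicing at $t=0$ (your projection of $\Psi^{-1}(M)$ is exactly the paper's $C=M\cap\{x^{n+1}=0\}$). The shear $\Psi$ is just a clean way to package the bookkeeping, and you additionally supply details (smoothness of $C$, the backward direction of the convexity equivalence) that the paper declares obvious.
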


Denote $L_{C,\mathbf v}$ in (\ref{eq:L}) by $L$. Let $N$ be a local normal vector field of $C$ in $\E^n$. Then $\,N_L(p+t\mathbf v,t):\,= N(p) + \langle N(p), \mathbf v \rangle \frac{\partial}{\partial t}\,$ is a local normal field of $L$.
Since  $ \langle N,\mathbf v \rangle^2< \langle N,N \rangle \langle \mathbf v,\mathbf v \rangle =  \langle N,N \rangle$, then $\langle N_L, N_L \rangle > 0$.  Since the normals to $L$ are spacelike, then $L$ is timelike.

If $C$ is the boundary of a convex body $B$ in $\E^n \subseteq \M$, then $L$ is the boundary of the convex body $\{ (x,0) + s(\mathbf v,1) : x \in B, \,s \in \R \}$ in $\M$, where $(\mathbf v,1)$ is the null vector whose orthogonal projection into $\E^n$ is $\mathbf v$. Additionally 
$L$ is ruled by the parallel null lines 
$\{ (p,0) + s(\mathbf v,1): s \in \R \}$ for $p \in C$.

To see that any timelike hypersurface $M$ of $\M$ ruled by parallel null lines can be constructed in this way, choose $\mathbf v$ so that $\mathbf v + \frac{\partial}{\partial t}$ is a 
vector field tangent to the parallel null lines associated to $M$, and let $C = M \cap \E^n $ where $\E^n$ is the spacelike hyperplane $\{ x^{n+1} = 0 \}$. It is obvious that $C$ is a convex hypersurface of $\E^n$ if and only if $M$ is a convex hypersurface of $\M$.

\begin{claim}\label{convex-null-splitting}
Let $L_{C,\mathbf v}$ be the timelike hypersurface of $\M$ 
defined by (\ref{eq:L}),
where $\mathbf v\in S^{n-1}$ and $C$ is a 
closed embedded 
hypersurface of $\E^n$ that is nowhere orthogonal to $\mathbf v$. Then $L_{C,\mathbf v}$  is geodesically connected if and only if 
\begin{enumerate}
\item[$(\diamond)$] \label{prop:diamond}
any pair of points in $C$ can be connected by a $C^1$ curve $\alpha$ in $C$ such that $\langle \alpha', \mathbf v \rangle$ either has constant sign or vanishes identically.
\end{enumerate}
\end{claim}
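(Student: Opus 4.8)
The plan is to reduce geodesic connectedness of $L_{C,\mathbf v}$ to the variational criterion of Bartolo--Candela--Flores (Theorem \ref{thm:null-killing}), and then translate that criterion into the stated condition $(\diamond)$ on $C$. First I would verify that $L = L_{C,\mathbf v}$ satisfies the hypotheses of Theorem \ref{thm:null-killing}: by Claim \ref{null-splitting} it is a timelike hypersurface ruled by parallel null lines, so the vector field $K = \mathbf v + \partial/\partial t$ restricts to a null vector field on $L$ that is tangent to $L$ along the ruling lines; since this field is the restriction of a parallel (hence Killing) field of $\M$ and its flow preserves $L$ (translating each ruling line along itself), $K$ is a complete null Killing field on $L$. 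For the Cauchy hypersurface, the obvious candidate is $S = C = L \cap \{x^{n+1}=0\}$, embedded via $p \mapsto (p,0)$; I would check $S$ is spacelike (it sits in the spacelike hyperplane $\{x^{n+1}=0\}$) and that every inextendible causal curve in $L$ meets it exactly once, which follows because the $x^{n+1} = t$ coordinate is strictly monotone along any causal curve of $\M$ (as $\langle \cdot,\partial/\partial t\rangle$ has constant sign on non-spacelike vectors) and is unbounded above and below along inextendible ones. This also gives global hyperbolicity of $L$.

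Next I would set up the diffeomorphism $\Phi: C \times \R \to L$, $\Phi(p,t) = (p + t\mathbf v, t)$, under which $K$ corresponds to $\partial/\partial t$ and the slice $C \times \{0\}$ corresponds to $S$. The product structure lets me pass between curves in $L$ and curves in $C \times \R$. Given any $C^1$ curve $\varphi(s) = \Phi(\beta(s), \tau(s))$ in $L$, its tangent is the $\Phi$-pushforward of $\beta'(s) + \tau'(s)\,\partial/\partial t$, and since $K$ is pushed from $\partial/\partial t$ while $\mathbf v \in T_pC$ is not (because $C$ has no tangent hyperplane orthogonal to $\mathbf v$, equivalently $\langle N, \mathbf v\rangle \neq 0$, so $\mathbf v \notin T_pC$), I can compute $\langle \varphi', K\circ\varphi\rangle_M$. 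The key computation is that this inner product equals $\langle \beta'(s), \mathbf v\rangle_{\E^n}$ up to terms involving $\tau'$ — more precisely, using $\langle K, K\rangle = 0$ on $L$, one finds $\langle \varphi', K\rangle = \langle \beta', \mathbf v\rangle$ (the $\partial/\partial t$ contribution drops out because $K$ is null and its $\mathbf v$-component pairs with $\beta'$ while its $t$-component pairs against the $t$-component of $K$ which is lightlike). So the sign behavior of $\langle \varphi', K\circ\varphi\rangle_M$ along $\varphi$ coincides exactly with the sign behavior of $\langle \beta', \mathbf v\rangle_{\E^n}$ along the projected curve $\beta$ in $C$.

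With this in hand the equivalence follows in both directions. If $(\diamond)$ holds, given $P_1, P_2 \in L$ write $P_i = \Phi(p_i, t_i)$; connect $p_1, p_2$ in $C$ by a curve $\alpha$ with $\langle \alpha', \mathbf v\rangle$ of constant sign or identically zero, then prepend/append vertical segments $t \mapsto \Phi(p_i, t)$ (along which the tangent is $K$ itself, so $\langle \cdot, K\rangle = \langle K,K\rangle = 0$) to adjust the $t$-coordinate; smoothing the corners, this produces a curve $\varphi$ in $L$ with $\langle \varphi', K\circ\varphi\rangle$ of constant sign or identically zero, so $L$ is geodesically connected by Theorem \ref{thm:null-killing}. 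Conversely, if $L$ is geodesically connected, apply Theorem \ref{thm:null-killing} to get such a $\varphi$ joining $\Phi(p_1,0)$ and $\Phi(p_2,0)$; its projection $\beta$ to $C$ joins $p_1$ to $p_2$ with $\langle \beta', \mathbf v\rangle = \langle \varphi', K\circ\varphi\rangle$ of constant sign or identically zero, establishing $(\diamond)$.

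The main obstacle I anticipate is the bookkeeping in the inner-product computation $\langle \varphi', K\circ\varphi\rangle_M = \langle \beta', \mathbf v\rangle_{\E^n}$: one must be careful that $\varphi'$ has a $\partial/\partial t$-component coming both from $\tau'$ and from the $\mathbf v$-shearing in $\Phi$, and that the Minkowski metric pairs the $\E^n$-part and the $t$-part with opposite signs, so that the $\tau'$-dependent terms genuinely cancel against each other using $\langle \mathbf v, \mathbf v\rangle_{\E^n} = 1$. A secondary point requiring care is the smoothing of the concatenated curve at its corners — one needs the sign condition (constant sign, or vanishing identically) to be preserved, which it is because near a corner joining a vertical segment (where the pairing is $0$) to $\alpha$ one can smooth within the half-space where $\langle \cdot, K\rangle$ has the sign dictated by $\alpha$ (or stays $0$ throughout if $\alpha$ has the vanishing property), but this must be stated carefully to be rigorous.
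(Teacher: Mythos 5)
Your proposal is correct and follows essentially the same route as the paper: verify the hypotheses of Theorem \ref{thm:null-killing} (complete null Killing field $K=\mathbf v+\partial/\partial t$, global hyperbolicity via a Cauchy hypersurface obtained by slicing $L_{C,\mathbf v}$, using closedness of $C$ so inextendible causal curves of $L_{C,\mathbf v}$ are inextendible in $\M$), then compute $\langle \varphi', K\circ\varphi\rangle=\langle\alpha',\mathbf v\rangle$ using $\langle K,K\rangle=0$. The only stylistic difference is that the paper writes an arbitrary curve of $L_{C,\mathbf v}$ as $\varphi(s)=(\alpha(s)+\mathbf v\beta(s),\beta(s))$ and, in the forward direction, simply pairs the given $\alpha$ with \emph{any} $\beta$ joining the two time values on the same parameter interval, which makes your concatenation-and-corner-smoothing step unnecessary.
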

(By {\em closed embedded} we  mean topologically embedded as a closed subset.)

Note that $K = (\mathbf v + \frac{\partial}{\partial t})|_{L_{C,\mathbf v}}$ is a null Killing field on $L_{C,v}$.
$K$ is complete since its integral curves are the null lines that rule $L_{C,\mathbf v}$. 

Additionally, $L_{C,\mathbf v}$ inherits global hyperbolicity from $\M$. To see this, let $S$ be the intersection of any Cauchy hypersurface of $\M$ with $L_{C,\mathbf v}$. 
Given a curve $\varphi$ in $L_{C,\mathbf v}$, we can write
$\varphi$, using a pair of curves $\alpha : I \to C$ and $\beta : I \to \R$, as $\varphi(s) = (\alpha(s) + \mathbf v \beta(s), \beta(s))$. 
Since any inextendible curve of $C$ is inextendible in $\E^n$, it follows 
that any inextendible causal curve  
of $L_{C,\mathbf v}$ is causal and inextendible in $\M$.
Thus $S$ is a Cauchy hypersurface for $L_{C,\mathbf v}$ and $L_{C,\mathbf v}$ is globally hyperbolic.

By Theorem \ref{thm:null-killing}, a pair of points in $L_{C,\mathbf v}$ are joined by a geodesic if and only if they are joined by a curve $\varphi$  such that the sign of $\langle \varphi', K \circ \varphi \rangle$ is constant or vanishes identically.
For a pair of points $(p+t_p\mathbf v,t_p), (q + t_q \mathbf v, t_q) \in L_{C,\mathbf v}$, 
let $\beta$ be any curve from $t_p$ to $t_q$.
The curve $\varphi(s) = (\alpha(s) + \mathbf v \beta(s), \beta(s))$ in $L_{C,\mathbf v}$ satisfies
$$
\langle \varphi', K \circ \varphi \rangle = 
\langle (\alpha' + (\mathbf v + \tfrac{\partial}{\partial t}) \beta' , \mathbf v + \tfrac{\partial}{\partial t} \rangle
= \langle \alpha', \mathbf v \rangle.
$$ 
Hence the lemma.

\begin{claim}\label{null-geo-conn}
Any timelike convex hypersurface $M$ of $\M$ 
that is ruled by parallel null lines
is geodesically connected.
\end{claim}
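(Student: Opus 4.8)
\medskip
\noindent\emph{Proof proposal.} By Claim \ref{null-splitting} I would first write $M=L_{C,\mathbf v}$ for some $\mathbf v\in S^{n-1}$ and some convex hypersurface $C$ of $\E^n$ having no tangent hyperplane orthogonal to $\mathbf v$. Splitting off a maximal semi\nobreakdash-Euclidean factor as in the opening reduction of the proof of Theorem \ref{thm:main} (and invoking Case 1 should the null ruling disappear), one may assume $M$ is not ruled by parallel spacelike lines; equivalently $C$ contains no straight line, since a line in $C$ would, by convexity, make $C$ a union of parallel lines in some direction $\mathbf u$, and then $L_{C,\mathbf v}$ would be ruled by the parallel spacelike lines of direction $(\mathbf u,0)$ lying in the hyperplanes $\{x^{n+1}=\mathrm{const}\}$. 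In particular, by Lemma \ref{lem:weak-strong}, $C$ has a point of strict convexity. By Claim \ref{convex-null-splitting} it then suffices to verify condition $(\diamond)$: any two points of $C$ are joined by a $C^1$ curve $\alpha$ in $C$ along which $\ell:=\langle\,\cdot\,,\mathbf v\rangle$ is monotone (the identically vanishing case $\langle\alpha',\mathbf v\rangle\equiv 0$ being allowed).

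Let $B$ be the convex body with $\partial B=C$. The hypothesis on $C$ is precisely that $\mathbf v$ is never normal to $C$; hence the gradient $\mathbf v-\langle\mathbf v,N\rangle N$ of $\ell|_C$ (with respect to the metric induced from $\E^n$) never vanishes, so $\ell|_C$ has no critical points, and $\ell$ attains neither a maximum nor a minimum on $B$ (such an extremum would occur at a point of $C$ whose tangent hyperplane is orthogonal to $\mathbf v$), so $\ell(B)$ is an open interval $I=(t_-,t_+)$ and $B$ is unbounded. I would then deduce $(\diamond)$ from two facts. First, (a): for each $c\in I$ the level set $C_c:=C\cap\{\ell=c\}$ is path\nobreakdash-connected, because it is the boundary, relative to the hyperplane $\{\ell=c\}\cong\E^{n-1}$, of the set $B\cap\{\ell=c\}$, which is full\nobreakdash-dimensional, closed, convex, not all of $\E^{n-1}$, and contains no line (a line in it would be a line in $B\subseteq C$), and the boundary of such a set is connected once $n-1\ge 2$. (When $n=2$, $C$ is a connected convex curve with no tangent line orthogonal to $\mathbf v$, hence $C\cong\R$ and $\ell|_C$ is a monotone diffeomorphism onto $I$, so $(\diamond)$ is immediate; assume $n\ge 3$ henceforth.) Second, (b): for every $p\in C$ and every $c_1\in I$ with $c_1>\ell(p)$ there is a $C^1$ curve in $C$ starting at $p$, along which $\ell$ is nondecreasing, ending at a point of $C_{c_1}$; symmetrically for $c_1<\ell(p)$ (replace $\mathbf v$ by $-\mathbf v$). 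Granting (a) and (b), $(\diamond)$ follows: given $p,q$ with $\ell(p)\le\ell(q)$, use (b) to reach a point $p'\in C_{\ell(q)}$ from $p$ along an $\ell$\nobreakdash-nondecreasing curve, then join $p'$ to $q$ inside the path\nobreakdash-connected $C_{\ell(q)}$; the concatenation has $\ell$ nondecreasing and, after a small $C^1$ smoothing, has $\langle\alpha',\mathbf v\rangle$ of constant sign or identically zero.

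For (b) my plan is to use the gradient flow of $\ell|_C$ on $C$. Its velocity is the tangential part $\mathbf v^{\top}$ of $\mathbf v$, of length $\sqrt{1-\langle\mathbf v,N\rangle^{2}}\le 1$; since $C$ is closed in the complete space $\E^n$, this bounded\nobreakdash-speed flow is complete, and along it $\ell$ strictly increases because $\ell|_C$ has no critical point. The remaining — and essential — point is that, from $p$, the flow attains values arbitrarily close to $t_+$; that is, it does not stall at some $c^*<t_+$. If it did, then $\int_0^{\infty}|\mathbf v^{\top}|^{2}<\infty$, so $\langle\mathbf v,N\rangle^{2}\to 1$ along a subsequence; as $C$ is closed and has no point where $N=\pm\mathbf v$, the flow cannot subconverge in $C$, hence runs off to infinity within $C\cap\{\ell\le c^{*}\}$ along a direction orthogonal to $\mathbf v$ lying in the recession cone of $B$, and this I would rule out using the absence of a minimum of $\ell$ on $B$ together with fact (a). A more structural variant of the same step: since $(\inte\Normals)\cap\Recess$ is a nonempty convex set — nonemptiness being the soft fact that $(\inte K)\cap K^{*}\neq\emptyset$ for any cone $K$ with interior, applied to $K=\Normals$ via Lemma \ref{lem:dual} — one can (flipping $\mathbf v$ if needed) choose $\mathbf v_0\in(\inte\Normals)\cap\Recess$ with $\langle\mathbf v_0,\mathbf v\rangle\ge 0$, whereupon Wu's theorem (Theorem 2 of \cite{wu}; cf.\ the proof of Theorem \ref{thm:wu2}) presents $C$, over the convex set $D_0$ obtained by orthogonal projection to $\mathbf v_0^{\perp}$, as the graph over $\inte D_0$ of a convex function $u$ together with closed half\nobreakdash-lines parallel to $\mathbf v_0$ over $\partial D_0$; under this parametrization $\ell$ becomes the convex function $\langle\,\cdot\,,\mathbf v\rangle+\langle\mathbf v_0,\mathbf v\rangle\,u$ on $D_0$, which has no critical point and no minimum, and (b) reduces to connecting points of $D_0$ by paths monotone for this convex function — done by following its gradient until one leaves $\inte D_0$ and then sliding along the half\nobreakdash-lines. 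Either way, the no\nobreakdash-stalling assertion — a suitable escape/properness statement for $\ell|_C$ rooted in the absence of extrema of $\ell$ on $B$ — is the one place where real work is needed; the reductions and fact (a) are essentially formal.
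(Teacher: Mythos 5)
Your reduction to condition $(\diamond)$ via Claims \ref{null-splitting} and \ref{convex-null-splitting}, and your fact (a) on connectedness of the levels $C_c$ of $\ell=\langle\,\cdot\,,\mathbf v\rangle$, match the paper's route. The genuine gap is exactly the step you flag yourself: the reachability statement (b), i.e.\ that from $p$ one can reach every level $c_1\in\ell(C)$ by an $\ell$-monotone curve. Neither of your two sketches closes it. The first ends with ``this I would rule out using the absence of a minimum of $\ell$ on $B$ together with fact (a)'' without an argument (and the mere existence of a recession direction of $B$ orthogonal to $\mathbf v$ is not by itself contradictory, so something quantitative about the decay of $|\mathbf v^{\top}|$ would really be needed). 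The second sketch leaves unresolved what happens when the gradient flow of the convex function $\langle\,\cdot\,,\mathbf v\rangle+\langle\mathbf v_0,\mathbf v\rangle u$ meets $\partial D_0$ or when $\langle\mathbf v_0,\mathbf v\rangle=0$, in which case $\ell$ is constant along the half-lines and ``sliding along them'' gains nothing; also, your ``soft fact'' $(\inte K)\cap K^*\neq\emptyset$ is a Euclidean-signature statement (provable by separating $\inte K$ from $K^*$ and getting $\langle\w,\w\rangle\le 0$), not a fact about arbitrary cones --- the paper's Example \ref{RN-counterexample} shows it fails for the Minkowski dual, which is why Lemma \ref{lem:RN} is not soft.

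The paper avoids global analysis of the flow altogether and fills precisely this hole with a connectedness argument that uses only the ingredients you already have. Regularity of $\ell|_C$ gives each point of each level a \emph{splitting neighborhood} $U\cong U_a\times(-\varepsilon,\varepsilon)$ via the gradient flow; combined with your fact (a), reachability of \emph{some} point of $C_c$ by a $(\diamond)$-curve implies reachability of \emph{every} point of $C_c$. Hence the set $I$ of levels reachable from $p$ is open in $\range\ell$ (splitting neighborhoods push one slightly past any reachable level), and its endpoints cannot lie in $\range\ell$ (a splitting neighborhood of a point at an endpoint level would place nearby levels, already in $I$, within reach of it), so $I$ is also closed in $\range\ell$. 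Since $C$ is connected, $\range\ell$ is connected and $I=\range\ell$. No properness or no-stalling statement for the flow is needed; the flow is used only locally. I recommend replacing your step (b) by this open-and-closed argument; your reduction to the line-free case and your fact (a) can then stand as written.
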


By Claim 1, we may write $M = L_{C,\mathbf v}$ as in (\ref{eq:L}), where $\mathbf  v\in S^{n-1}$ and $C$ is a 
convex
hypersurface of $\E^n$ which is nowhere orthogonal to $v$. By  
Claim 2, it suffices to verify  $(\diamond)$.

Define $f:C\to\R$ by $ f =  \langle \cdot, \mathbf v \rangle $. Since $C$ is nowhere orthogonal to $v$, then $f$ is regular.  Denote the level $f=a$ by $C_a$.  
Then for any $p\in C_a$, the gradient flow of $f$ provides neighborhoods $U_a$ in $C_a$ and  $U$ in $C$, and  $\varepsilon>0$, such that $U$ is diffeomorphic to $U_a\times (-\varepsilon,\varepsilon)$.  Call $U$ a \emph{splitting neighborhood}.

Suppose $p\in C_a$, $q\in C_b$.  If $a=b$, then $p$ and $q$ are joined by a curve $\alpha$  in $C_a$, so $\langle \alpha', \mathbf v \rangle=0$ and $(\diamond)$ is satisfied.
Since $C$ is a convex hypersurface of $\E^n$, the level sets of $f$ are convex hypersurfaces of  $(n-1)$-planes orthogonal to $ \mathbf v$, and hence are connected.  It follows that if 
$a\ne b$, and $p$ and $q$
are joined by a curve $\alpha$ as in $(\diamond)$,  then  $p$ and any $q'\in C_b$ are joined by such a curve.  To see this, join $q$ to $q'$ by a $C^1$ curve in $C_b$, covered by finitely many  splitting neighborhoods.  Then we obtain the desired curve by moving sufficiently close to $q$ along $\alpha$ and then moving through the splitting neighborhoods to $q'$.

Given $p\in C$, 
without loss of generality say $p\in C_0$.
Let $I$ be the maximal subinterval of $\range f$ 
containing $0$  
such that for all $c\in I$, $p$ is joined to some  (and hence every) point of $C_c$ by a curve satisfying ($\diamond$). The existence of splitting neighborhoods implies  that  $I$ contains an open interval about each of its members, and hence $I=(a,b)$ for $-\infty\le a<b\le\infty$;  and furthermore that $a, b\notin\range f$. Thus $I$ is both open and closed in $\range f$.

Since $C$ is a convex hypersurface of $\E^n$, $\range f$ is connected.  Thus $I=\range f$.  This completes the proof of Case 2, and hence of Theorem \ref{thm:main}.
\qed

\vspace{3mm}

Finally, let us verify a claim from Section \ref{introduction}:

\begin{prop}\label{prop:hyp-R>0}
The timelike convex hypersurfaces $M$ of $\M$satisfy $\mathcal{R}\ge 0$.
\end{prop}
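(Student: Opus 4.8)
The plan is to read off $\mathcal{R}\ge 0$ from the Gauss equation, using that the ambient $\M$ is flat and that convexity forces the second fundamental form of $M$ to be positive semidefinite. First I would fix $p\in M$ and a unit normal $N$ to $M$ at $p$; since $M$ is timelike, $N$ is spacelike, so $\langle N,N\rangle=1$, and I would take $N$ to be the inward normal, i.e. $N\in\Normals$ (as in the proof of Theorem \ref{thm:support-convex}). Writing $\II$ for the normal-valued second fundamental form of $M$ in $\M$, set $\II(X,Y)=h(X,Y)\,N$ for a symmetric bilinear form $h$ on $T_pM$. The argument already used for Theorem \ref{thm:support-convex} shows $h$ is positive semidefinite: for $X\in T_pM$, the geodesic $\gamma$ of $M$ with $\gamma(0)=p$, $\gamma'(0)=X$ has $\gamma''(0)=\II(X,X)$, and since $\gamma$ stays in $\partial B\subseteq B$ while $N$ points into $B$, $\langle\gamma''(0),N\rangle=h(X,X)\ge 0$.

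Next I would invoke the Cauchy--Schwarz inequality for a positive semidefinite symmetric bilinear form, which gives $h(X,Y)^2\le h(X,X)\,h(Y,Y)$, hence $h(X,X)h(Y,Y)-h(X,Y)^2\ge 0$ for all $X,Y\in T_pM$. (If $h(Y,Y)>0$ this is the nonpositivity of the discriminant of the nonnegative quadratic $t\mapsto h(X+tY,\,X+tY)$; if $h(Y,Y)=0$ the same nonnegativity forces $h(X,Y)=0$.)

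Finally I would apply the Gauss equation (e.g.\ the hypersurface case in \cite{oneill}). Since $\M$ is flat, for a nondegenerate $2$-plane $\sigma\subseteq T_pM$ spanned by $X,Y$,
\[
\Sec(\sigma)=\frac{\langle\II(X,X),\II(Y,Y)\rangle-\langle\II(X,Y),\II(X,Y)\rangle}{\langle X,X\rangle\langle Y,Y\rangle-\langle X,Y\rangle^2}=\frac{h(X,X)h(Y,Y)-h(X,Y)^2}{\langle X,X\rangle\langle Y,Y\rangle-\langle X,Y\rangle^2},
\]
the second equality using $\langle N,N\rangle=1$. By the previous step the numerator is $\ge 0$, while the denominator is positive exactly when $\sigma$ is spacelike and negative exactly when $\sigma$ is timelike. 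Thus spacelike sectional curvatures are $\ge 0$ and timelike ones are $\le 0$, which is precisely $\mathcal{R}\ge 0$.

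I do not expect a real obstacle: the only points requiring care are keeping the sign convention in the Gauss equation consistent (harmless here because the ambient curvature term vanishes, and checkable on a round Euclidean sphere), noting that the conclusion is independent of the orientation of $N$ since $h(X,X)h(Y,Y)-h(X,Y)^2$ is invariant under $h\mapsto-h$, and observing that no regularity issue arises, since a convex hypersurface is by definition smoothly embedded so $\II$ is everywhere defined, and we only ever use semidefiniteness (never definiteness) of $h$, so degenerate points of $M$ are covered as well.
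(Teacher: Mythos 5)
Your proof is correct and follows essentially the same route as the paper's: both rest on the Gauss equation in the flat ambient $\M$, the positive semidefiniteness of the second fundamental form forced by convexity, and the resulting Cauchy--Schwarz inequality $h(X,X)h(Y,Y)-h(X,Y)^2\ge 0$, with the sign of the denominator distinguishing spacelike from timelike planes. The only cosmetic difference is that the paper encodes $\II$ via the Hessian of a local graph function over $T_pM$ (whence the factor $1+\langle\del f,\del f\rangle$), while you work directly with the unit spacelike normal; the two computations are identical in content.
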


\begin{proof}
If
$$ \II :T_p M \times T_p M\to (T_p M)^\perp$$ 
is 
the second fundamental form of $M$ in $ \M$, then the Gauss Equation states 
\begin{equation}
\label{eq:gausseqn}
R(\mathbf x,\mathbf y,\mathbf x,\mathbf y)=\langle \II (\mathbf x,\mathbf x), \II(\mathbf y,\mathbf y)\rangle - \langle \II(\mathbf x,\mathbf y),\II(\mathbf x,\mathbf y)\rangle. 
\end{equation}
Locally, a timelike convex hypersurface is the graph of a convex Lorentzian function $f : U \to \R$ where $U$ is a neighborhood of $\mathbf 0 \in T_p M \cong \m$. A simple calculation yields \begin{equation}
\label{eq:curveqn}
R(\mathbf x,\mathbf y,\mathbf x,\mathbf y)= \frac{\hess f(\mathbf x,\mathbf x) \hess f(\mathbf y, \mathbf y) - \hess f(\mathbf x,\mathbf y)^2}{1 + \langle (\del f)_p, (\del f)_p \rangle}
\end{equation} The numerator is nonnegative by convexity of $f$ and the denominator is positive
 because $f$ is Lorentzian. It follows that timelike sectional curvatures are $\le 0$ and spacelike sectional curvatures are $\ge 0$.
\end{proof}

%%%%%%%%%%%%%%%%%%%%%%%%%%%%%%%%%%%%%%%%%%%%%%%%%

\section{Existence criterion for convex functions}
\label{sec:convex}

The aim of this section is to prove a criterion for the non-critical level sets of a (proper) not-necessarily-convex function $u : M \to \R$ on a semi-Riemannian manifold to be the levels of a (proper) convex function $f : M \to \R$. We use this criterion to extend
our theorems on geodesic connectedness. Finally we apply these results to a class of not-necessarily-convex hypersurfaces in Minkowski space.

The corresponding Riemannian criterion is given in \cite{ab-convex}.

\begin{thm}\label{thm:convex-function1}
Suppose $u:M\to\R$ is a smooth function on a semi-Riemannian manifold $M$. Denote the critical set of $u$ by $\,C$. 
For every $\,a\in \range u\,$, we denote the non-critical points of $\,u^{-1}(a)$ by $\,L_a$. 

Suppose $N$ is a vector field on $M - C$ satisfying $Nu>0$. Define $\eta:M-C\to\R$ 
by $$ \eta(p)=\hess u(N_p,N_p)/ (N_pu)^2. $$
Define 
$\mu:\,\{a\in\R:L_a\ne\emptyset\}\to\R$\, by 
\begin{align}
\label{eq:convex-test}
\mu\,(a)\, =\, \inf \: \bigl\{ \eta(p), \eta(p)- \bigl(\hess\, u\,(\mathbf x,N_p)^2/& (N_pu)^2\hess u\,(\mathbf x,\mathbf x)\bigr) :\\&
p \in L_a, \mathbf x\in T_pL_a,\,\hess u\,(\mathbf x,\mathbf x)\ne 0 \bigr\}.
\notag
\end{align}

\vspace{1mm}

Then there is a smooth function $\rho:\range u\to\R$ such that 
$\rho' \geq 1$ 
and $f=\rho \circ u$ is convex if and only if $u$ satisfies the following conditions (\ref{criticals})--(\ref{bound-below}):
\begin{enumerate}
\item\label{criticals}
the critical set $C$ consists of local minimum points,

\item\label{infin-convex}
the restriction of $\,\hess u$ to each tangent space $T_pL_a$ is positive semidefinite (i.e. each $L_a$ is a locally convex hypersurface on which $N$ is outward-pointing), 

\item\label{null}
if $\mathbf x\in T_pL_a$ and $\hess u\,(\mathbf x,\mathbf x)=0$, then $\hess u\,(\mathbf x,N_p)=0$,

\item\label{bound-below}
the function $\mu$ is bounded below by a
function that extends to a 
continuous function \,$h:\range u
\to\R$ (which clearly may be assumed nonpositive). \end{enumerate}

Moreover, $f=\rho \circ u$ is strictly convex if and only if $u$ satisfies 
condition (\ref{bound-below}) and the following conditions (\ref{hyp:criticals2}) and (\ref{infin-strictly-convex}):
\begin{enumerate}[(i)]
\item\label{hyp:criticals2}
the critical points are nondegenerate local minima,

\item\label{infin-strictly-convex}
the restriction of $\,\hess u$ to each tangent space $T_pL_a$ is positive definite.
\end{enumerate}

Finally, if $u$ is proper, then $f$ may be assumed proper. 
\end{thm}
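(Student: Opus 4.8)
\textbf{Proof proposal for Theorem \ref{thm:convex-function1}.}

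The plan is to reduce the problem to finding a reparametrizing function $\rho$ for which the Hessian of $f=\rho\circ u$ is controlled along the two complementary ``directions'' at each non-critical point: tangent to the level set $L_a$, and along the transverse vector field $N$. The basic computation is the chain rule for the Hessian: for $p\in M-C$ and $\mathbf x,\mathbf y\in T_pM$,
\begin{equation}\label{eq:chain-rule-hess}
\hess f(\mathbf x,\mathbf y)=\rho'(u(p))\,\hess u(\mathbf x,\mathbf y)+\rho''(u(p))\,(\mathbf x u)(\mathbf y u).
\end{equation}
Writing $T_pM=T_pL_a\oplus\R N_p$ for $p\in L_a$ and decomposing an arbitrary $\mathbf w=\mathbf x+sN_p$ with $\mathbf x\in T_pL_a$, I would expand $\hess f(\mathbf w,\mathbf w)$ using \eqref{eq:chain-rule-hess}, noting that $\mathbf x u=0$ while $N_pu>0$. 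Completing the square in the variable $s$ (when $\hess u(\mathbf x,\mathbf x)\neq 0$), positivity of this quadratic form in $(\mathbf x,s)$ is equivalent to the two conditions: (a) $\rho'(a)\hess u(\mathbf x,\mathbf x)\geq 0$, and (b) a discriminant-type inequality that, after dividing through, reads precisely $\rho''(a)/\rho'(a)\geq -\mu(a)$, where $\mu(a)$ is the infimum in \eqref{eq:convex-test}. Condition (\ref{null}) handles the degenerate directions where $\hess u(\mathbf x,\mathbf x)=0$: it forces $\hess u(\mathbf x,N_p)=0$ too, so the cross term disappears and no constraint beyond (\ref{infin-convex}) is needed there. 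Thus conditions (\ref{infin-convex}) and (\ref{null}) are exactly what is needed away from $C$, once $\rho'>0$, and the remaining requirement is the differential inequality $(\log\rho')'\geq -\mu$ on $\range u$, together with $\rho'\geq 1$.

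Next I would construct $\rho$ explicitly. Given that $\mu\geq h$ for some continuous $h:\range u\to\R$ which may be taken nonpositive, set $\rho'(a)=\exp\!\left(-\int_{a_0}^{a}h(t)\,dt\right)$ for a basepoint $a_0$; since $-h\geq 0$ this gives $\rho'$ smooth, positive, and (after rescaling, or because $-h\ge0$ makes $\rho'$ nondecreasing with value $1$ at $a_0$ — one adjusts $a_0$ to the left end, or multiplies by a constant $\geq 1$) satisfying $\rho'\geq 1$; and $(\log\rho')'=-h\geq -\mu$, as required. Then $\rho=\rho(a_0)+\int_{a_0}^{a}\rho'$, and $f=\rho\circ u$. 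To see this $f$ is genuinely convex as a function on all of $M$ (not just on $M-C$), I would invoke condition (\ref{criticals}): near a critical point $p_0$, $u$ has a local minimum, so $f=\rho\circ u$ does too (as $\rho$ is increasing), and $\hess f$ at an interior minimum of a smooth function is automatically positive semidefinite; combined with continuity of $\hess f$ and positive semidefiniteness on the dense open set $M-C$, convexity holds everywhere. For the converse direction, I would run the quadratic-form analysis backwards: if some $\rho$ with $\rho'\geq 1$ makes $f$ convex, then restricting $\hess f\geq 0$ to $T_pL_a$ gives (\ref{infin-convex}) since $\rho'>0$; evaluating at critical points gives (\ref{criticals}) (a smooth convex function's critical points are minima); examining the sign of $\hess f(\mathbf x+sN_p,\mathbf x+sN_p)$ as $s\to\pm\infty$ in a degenerate direction forces the cross term to vanish, giving (\ref{null}); and positivity of the full form forces $\rho''/\rho'\geq-\eta(p)$ and $\rho''/\rho'\geq-\bigl(\eta(p)-\hess u(\mathbf x,N_p)^2/(N_pu)^2\hess u(\mathbf x,\mathbf x)\bigr)$ pointwise, hence $\rho''/\rho'\geq-\mu(a)$; since $\rho''/\rho'=(\log\rho')'$ is continuous on $\range u$, this exhibits $\mu$ as bounded below by a continuous function, giving (\ref{bound-below}).

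For the strictly convex refinement, I would repeat the computation demanding strict positivity of $\hess f(\mathbf w,\mathbf w)$ for $\mathbf w\neq 0$. Away from $C$: strict positivity along $T_pL_a$ (with $\rho'>0$) is condition (\ref{infin-strictly-convex}), which in particular rules out the degenerate case and makes the completed-square argument give strict inequality as long as $(\log\rho')'>-\mu$ — and one gains strictness ``for free'' by replacing $h$ with $h-\epsilon$ for a small $\epsilon>0$, or simply noting $\rho'\ge 1$ forces $\rho''=\rho'\cdot(\log\rho')'$ with $(\log\rho')'\ge -h\ge -\mu$, then perturbing; the point is condition (\ref{bound-below}) is the only $\mu$-constraint and it already allows strict construction. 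At $C$: (\ref{hyp:criticals2}) says the minima are nondegenerate, so $\hess u$ is positive definite there, hence $\hess f=\rho'\hess u$ is too (using $\mathbf x u=0$ at critical points). So (\ref{bound-below}), (\ref{hyp:criticals2}), (\ref{infin-strictly-convex}) suffice; conversely these are forced by strict convexity of some admissible $f$, by the same backward reading. Finally, properness of $f$ when $u$ is proper: since $\rho'\geq 1$, $\rho$ is a homeomorphism of $\range u$ onto its image with $\rho(a)\to\pm\infty$ as $a$ approaches the ends of $\range u$ (integrating $\rho'\geq 1$), so $f^{-1}([-R,R])=u^{-1}(\rho^{-1}([-R,R]))$ is a preimage under $u$ of a compact subinterval of $\range u$, hence compact.

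\textbf{Main obstacle.} The delicate point is the handling of the degenerate tangent directions where $\hess u(\mathbf x,\mathbf x)=0$: the infimum defining $\mu$ in \eqref{eq:convex-test} deliberately excludes these, so one must separately verify via condition (\ref{null}) that no additional constraint is hidden there, and in the converse direction one must extract (\ref{null}) from convexity by an asymptotic ($s\to\infty$) argument rather than from the discriminant. A secondary subtlety is the passage from ``$\hess f\geq 0$ on the dense set $M-C$'' to ``$\hess f\geq 0$ everywhere,'' which needs continuity of $\hess f$ together with the local-minimum structure at $C$; and in the strictly convex case, ensuring the strict inequality survives the limit requires the nondegeneracy hypothesis (\ref{hyp:criticals2}) rather than mere continuity.
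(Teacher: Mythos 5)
Your argument is essentially the paper's: the authors outsource exactly these computations to \cite{ab-convex}, recording only that convexity of $f=\rho\circ u$ reduces to the ODE $\rho''+h\rho'=0$ with solution $\rho'(a)=\exp\bigl(-\int_{a_0}^a h\bigr)\ge 1$ --- which is precisely your differential inequality $(\log\rho')'\ge -\mu$ and your explicit choice of $\rho$ --- so your Hessian chain rule, the splitting $T_pM=T_pL_a\oplus\R N_p$, the discriminant analysis yielding $\mu$, the $\epsilon$-perturbation of $h$ for strictness, and the properness argument are a faithful reconstruction of what is being invoked. One local slip in your converse direction: for a degenerate tangent direction the restricted quadratic is $Q(s)=as^2+bs$ with $a\ge 0$ and $b=2\rho'\,\hess u(\mathbf x,N_p)$, and the limit $s\to\pm\infty$ does not detect $b$ (the $s^2$ term dominates when $a>0$); you should instead take $s\to 0^{\pm}$, or equivalently scale $\mathbf x\mapsto t\mathbf x$ with $t\to\infty$, where $Q\sim bs$ changes sign unless $b=0$, which is what forces condition (\ref{null}).
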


\begin{remark}

Since $\,\hess u$ is positive semidefinite at each $p \in 
L_a$, condition (\ref{null}) means the nullspace of $\,\hess u$ on $T_pL_a$ lies in the nullspace of $\,\hess u$.
\end{remark}
 
\begin{remark}
As mentioned in the introduction, it is often possible to verify (\ref{bound-below}) by showing that \,$\mu$\, is continuous and finite. \end{remark}

\begin{proof} In \cite[Theorem 1]{ab-convex}, Alexander and Bishop proved this result when $M$ is a Riemannian manifold and $N = \del u / \| \del u \|$, the outward unit normal field to the level sets of $u$. However, it suffices to require only $Nu > 0$ and $M$ semi-Riemannian. Then the calculations in the proof are unchanged.

In particular, the convexity condition of $f$ is shown to imply that we may take $\rho$ to be a solution of the differential equation $ \rho'' + h \rho' = 0 $ on $ \range u $. Since $h$ is nonpositive, $\rho$ can be chosen so that $ \rho'(a) = \exp({ -\int_{a_0}^a h}) \geq 1 $ where $a_0 = \inf \range u$. Therefore $\rho$ is proper. Thus $f$ will be proper if $u$ is proper.
\end{proof}

Now we have the following analogue of
%S cross-referencing needs to be changed in proofs 
Theorem \ref{thm:gordon-semi} and Corollary \ref{cor:lorentz-graph}:

\begin{thm}\label{thm:warbled-geo-connected}
Let $M$ be a null-disprisoning semi-Riemannian manifold. Suppose $M$ supports a proper nonnegative function $u : M \to \R$ whose critical set is a point and for which there is no non-constant complete geodesic on which $u$ is constant. 

Suppose $u$ satisfies conditions (1),(2),(3),(4) of Theorem \ref{thm:convex-function1} for a vector field $N$ on $M - C$ satisfying $Nu > 0$. Then $M$ is geodesically connected.

If in addition $M$ is a strongly causal space-time and $u$ is a Lorentzian function, then the graph $\Gamma(u)$ is geodesically connected.
\end{thm}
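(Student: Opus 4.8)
The plan is to deduce Theorem \ref{thm:warbled-geo-connected} from Theorem \ref{thm:convex-function1} together with the already-proved Theorem \ref{thm:gordon-semi} and Corollary \ref{cor:lorentz-graph}. The point is that the four conditions (1)--(4) of Theorem \ref{thm:convex-function1} are precisely what is needed to manufacture, out of the given function $u$, an honest proper convex function $f = \rho\circ u$ with the same level sets and the same critical set; once we have $f$, the earlier theorems apply almost verbatim.

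First I would invoke Theorem \ref{thm:convex-function1}: since $u$ satisfies (1)--(4), there is a smooth $\rho:\range u\to\R$ with $\rho'\ge 1$ such that $f=\rho\circ u$ is convex, and since $u$ is proper, $f$ may be taken proper. Next I would check that $f$ inherits the two auxiliary properties that Theorem \ref{thm:gordon-semi} requires. Nonnegativity: because $u\ge 0$ with minimum value $0$ attained at the critical point $p_0$, and because $\rho'\ge 1>0$ means $\rho$ is strictly increasing, $f=\rho\circ u\ge \rho(0)$; after subtracting the constant $\rho(0)$ (which changes neither the Hessian nor properness nor the level/critical sets) we may assume $f\ge 0$ with $f(p_0)=0$. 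Critical set: since $\rho$ is a diffeomorphism onto its image, $\del f = (\rho'\circ u)\,\del u$ vanishes exactly where $\del u$ vanishes, so the critical set of $f$ equals that of $u$, namely the single minimum point $p_0$. No constant non-trivial complete geodesic: if $f$ were constant on a non-constant complete geodesic $\gamma$, then since $\rho$ is injective $u$ would be constant on $\gamma$, contradicting the hypothesis on $u$.

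With these facts in hand, the two conclusions are immediate. For the first, $M$ is null-disprisoning and $f:M\to\R$ is a proper, nonnegative convex function whose critical set is the minimum point $p_0$, with no non-constant complete geodesic on which $f$ is constant; this is exactly the hypothesis of Theorem \ref{thm:gordon-semi}, so $M$ is geodesically connected. For the second, suppose moreover $M$ is a strongly causal space-time and $u$ is Lorentzian. I would observe that the map $(p,u(p))\mapsto(p,f(p))$ is a diffeomorphism of $\Gamma(u)$ onto $\Gamma(f)$ commuting with projection to $M$, so $\Gamma(f)$ is again a graph over $M$; moreover $f$ is Lorentzian provided we are slightly careful, since along a curve the induced causal character of the graph of $f$ differs from that of the graph of $u$ only by the positive factor coming from $\rho'\ge 1$ --- indeed one checks $\langle \del f,\del f\rangle = (\rho'\circ u)^2\langle\del u,\del u\rangle \ge -(\rho'\circ u)^2$, which is $>-1$ exactly when $\langle\del u,\del u\rangle > -1/(\rho'\circ u)^2$; since $\rho'\ge 1$ this follows from $\langle\del u,\del u\rangle>-1$. (If one prefers to avoid even this mild check, one can instead apply Lemma \ref{lem:convex-lift} directly to lift $f$ to $\Gamma(u)$: the lift is proper, nonnegative, and convex, and has critical set a single minimum point, so Theorem \ref{thm:gordon-semi} applies to the strongly causal space-time $\Gamma(u)$ exactly as in the proof of Corollary \ref{cor:lorentz-graph}.) Either way, $\Gamma(u)$ is geodesically connected.

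The routine parts are the bookkeeping about $\rho$ being a strictly increasing diffeomorphism, hence preserving critical sets and level sets; the one place that deserves a moment's care --- and which I would regard as the only real subtlety --- is confirming that the Lorentzian condition passes from $u$ to $f$ (or, equivalently, arranging the argument so that one applies the graph machinery to $\Gamma(u)$ rather than to $\Gamma(f)$, as in Corollary \ref{cor:lorentz-graph}). Given the inequality $\rho'\ge 1$ this is painless, but it is the step where the specific normalization $\rho'\ge 1$ from Theorem \ref{thm:convex-function1} is actually used rather than just $\rho'>0$.
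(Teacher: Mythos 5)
Your argument for the first conclusion is correct and is exactly the paper's: apply Theorem \ref{thm:convex-function1} to obtain a proper convex $f=\rho\circ u$ with the same critical set and level sets as $u$, then invoke Theorem \ref{thm:gordon-semi}.

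The second conclusion is where the proposal breaks down, in both of the routes you offer. First, the claim that $f=\rho\circ u$ is Lorentzian is false in general, and your inequality runs backwards: $\langle\del f,\del f\rangle=(\rho'\circ u)^2\langle\del u,\del u\rangle$, so when $\langle\del u,\del u\rangle\in(-1,0)$ and $\rho'>1$ this product can drop below $-1$; the condition $\langle\del f,\del f\rangle>-1$ is equivalent to $\langle\del u,\del u\rangle>-1/(\rho'\circ u)^2$, which is \emph{stronger} than $\langle\del u,\del u\rangle>-1$, not implied by it. Moreover, even if $f$ were Lorentzian, $\Gamma(f)$ and $\Gamma(u)$ are different submanifolds of $M\times\R$ with non-isometric induced metrics, so geodesic connectedness of $\Gamma(f)$ (which would in any case require strict convexity to use Corollary \ref{cor:lorentz-graph}) says nothing about $\Gamma(u)$, which is what the theorem asserts. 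Second, your parenthetical fallback --- lift $f$ to $\Gamma(u)$ and assert the lift is convex --- is unjustified: by Lemma \ref{lem:convex-lift} the Hessian on $\Gamma(u)$ of the lift $v$ of $u$ is $\hess u/(1+\langle\del u,\del u\rangle)$, so the Hessian of $\rho\circ v$ is $\rho''\,dv\otimes dv+\rho'\,\hess u/(1+\langle\del u,\del u\rangle)$; since $\hess u$ is not positive semidefinite (only its restriction to the level sets is), the $\rho$ that convexifies $u$ on $M$ need not convexify $\rho\circ v$ on $\Gamma(u)$, because the quantity $\mu$ of Theorem \ref{thm:convex-function1} is altered by the varying factor $1/(1+\langle\del u,\del u\rangle)$. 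The paper's proof therefore re-verifies conditions (1)--(4) for the lift $v$ on the null-disprisoning space-time $\Gamma(u)$, producing the new continuous lower bound $h^u(a)/w(a)$ with $w(a)=\sup\{1+\langle\del u,\del u\rangle_q:q\in M_a\}$, and applies Theorem \ref{thm:convex-function1} afresh to get a possibly different $\rho$; it also checks, via the graph geodesic equation, that no non-constant complete geodesic of $\Gamma(u)$ lies in a level set of $v$. These are the steps missing from your proposal.
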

 
\begin{proof} 
\setcounter{claim}{0}
Since $ u : M \to \R $ satisfies the conditions of Theorem \ref{thm:convex-function1} for the vector field $N$, we can choose a smooth function $ \rho : \range u \to \R $ such that the function $ f_M = \rho \circ u $ is proper, convex, and has the same critical set, level sets, and sublevel sets as $ u $. Since the critical set of $f_M$ is a point, and there is no non-constant complete geodesic on which $f_M$ is constant, it follows from Theorem \ref{thm:gordon-semi} that $M$ is geodesically connected.

Now assume further that $M$ is a strongly causal space-time and $u$ is a Lorentzian function. By Lemma \ref{lem:strong-causal-subman}, $\Gamma(u)$ is strongly causal. Thus $\Gamma(u)$ is null-disprisoning \cite[Proposition 3.13]{bee}.

Since projection $ \Pi: \Gamma(u)\to M$ by $(p, u(p)) \mapsto p $ is a diffeomorphism, we may identify a vector $\mathbf x\in T_pM$ with its lift, $\mathbf x + (\mathbf x u) \partial_y$, via the inverse projection map. Here we write $\mathbf x$ for either. We write $ v : \Gamma(u) \to \R $ for the lift of $u$ to $\Gamma(u)$. Thus $ Nu = Nv > 0$. The 
non-critical 
level sets of $u$ and their lifts to level sets of $v$ will both be denoted by $ L_a $, and similarly for the sublevel sets $M_a$ and the critical set $C$. Thus $C$ denotes either the minimum point of $u$ on $M$ or the minimum point of $v$ on $\Gamma(u)$.

\begin{claim}
\label{clm:nonconvex-geo-conn}
The lift $ v : \Gamma(u) \to \R $ of $u$ to $\Gamma(u)$ satisfies the conditions of Theorem \ref{thm:convex-function1}. 
for the vector field $N$ on $\Gamma(u) - C$. Thus there is a smooth function $ \rho : \range v \to \R $ such that $ \rho' \geq 1 $ and $ f = \rho \circ v : \Gamma(u) \to \R $ is proper and convex with the same level sets and critical sets as $v$.
\end{claim}

Since $u$ is Lorentzian, $1 + \langle \del u, \del u \rangle > 0$ and by Lemma \ref{lem:convex-lift}, $$
\hess v(\mathbf x,\mathbf y) = \frac{\hess u(\mathbf x,\mathbf y)}{1 + \langle (\del u)_p, (\del u)_p \rangle}.
$$ 
Thus $ \hess v(\mathbf x,\mathbf x) \geq 0 $ if and only if $ \hess u(\mathbf x,\mathbf x) \geq 0 $. Since $L_a$ is infinitesimally convex in $M$ for any $a$, $L_a$ is also infinitesimally convex in $\Gamma(u)$.

Furthermore, $\hess v(\mathbf x,\mathbf y) = 0$ if and only if $ \hess u(\mathbf x,\mathbf y) = 0 $. 
If $\hess v(\mathbf x,\mathbf x) = 0 $, then $ \hess u(\mathbf x,\mathbf x) = 0 $ and $ \hess u(\mathbf x,N_p) = 0 $ by condition (3), and hence $ \hess v(\mathbf x,N_p) = 0 $.

Define $ \eta^u : M - C \to \R $ and $ \mu^u : \range( u \, | \, M - C) \to \R $ as $\eta$ and $\mu$ are defined for $u$ in Theorem \ref{thm:convex-function1}, and $ \eta^v : \Gamma(u) - C \to \R $ and $ \mu^v : \range( v \, | \, \Gamma(u) - C) \to \R $ similarly for $v$. Let $ h^u : \range u \to \R $ be a continuous lower bound of $ \mu^u $. Then by condition (4), 
$$
\eta^v(p) = \hess v(N_p, N_p)/(N_p v)^2 = \frac{\hess u(N_p, N_p)/(N_p u)^2}{1 + \langle (\del u)_p, (\del u)_p \rangle} \geq h^u(a)/w(a)
$$ where $p\in M_a$ and $ w(a) = \sup \{ 1 + \langle (\del u)_q, (\del u)_q \rangle : q \in M_a \} $. Similarly, if $ \hess v(\mathbf x,\mathbf x) \neq 0 $, then 
$$
\eta^v(p) - \frac{\hess v(\mathbf x, N_p)^2}{(N_p v)^2 \hess v(\mathbf x,\mathbf x)} \geq h^u(a)/w(a).
$$ 
Since $ u $ is Lorentzian, $ w(a) $ is a positive nondecreasing continuous function. Thus $ h^v(a) := h^u(a)/w(a) $ is a continuous lower bound of $ \mu^v(a) $ on $ \range v $, and claim \ref{clm:nonconvex-geo-conn} is verified.

\vspace{2mm}

Suppose $\gamma$ is a non-constant complete geodesic of $\Gamma(u)$ on which $f$ is constant. Since $\gamma$ is a curve in the graph of $ u $, we can find $ \alpha : \R \to M $ so that $ \gamma = (\alpha, u \circ \alpha ) $ is the lift of $\alpha$ to $\Gamma(u)$ and as in (\ref{geo-in-graph}), $$ \alpha''(t) + (u \circ \alpha)''(t)( \del u)_{\alpha(t)} = \mathbf 0. $$ 
However, since $ \gamma $ is in a level set of $f$ and therefore of $v$, $\alpha$ must be in a level set of $u$. Thus $ (u \circ \alpha)'' = 0 $, hence $ \alpha'' = \mathbf 0 $. Thus $\alpha$ is a non-constant complete geodesic of $M$ on which $u$ is constant, a contradiction.
Therefore by Theorem  
\ref{thm:gordon-semi}, $\,\Gamma(u)$ is geodesically connected.
\end{proof}

Finally we construct a large class of not-necessarily-convex but geodesically connected Lorentzian hypersurfaces. Specifically, we may perturb the levels of a strictly convex Lorentzian function by any $\sigma$ satisfying $\, 0 < \sigma' \le 1$ and still retain a geodesically connected graph: 

\begin{cor}\label{warbled-examples}
Let $ \sigma : [0, \infty) \to [0, \infty) $ be a proper smooth function with $0 < \sigma\,^{\prime}\leq 1 $ and let $ f : \m \to \R$ be a proper nonnegative strictly convex Lorentzian function. Let $ M \subset \M $ be the graph of $ u = \sigma \circ f : \m \to \R $ in $ \m \times \R = \M $. Then $M$ is a timelike hypersurface of $\M$ and is geodesically connected.
\end{cor}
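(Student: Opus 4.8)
The plan is to deduce this directly from Theorem~\ref{thm:warbled-geo-connected}, applied with the semi-Riemannian manifold taken to be $\m$ itself: recall $\m$ (Minkowski space) is globally hyperbolic, hence a strongly causal --- in particular null-disprisoning --- space-time. We apply the theorem to the function $u=\sigma\circ f$, with vector field $N=\grad_{g_0}u$ on $\m-\{p_0\}$, where $g_0$ is a fixed Euclidean metric on $\m\cong\R^n$ and $p_0$ is identified below; note $Nu=g_0(\grad_{g_0}u,\grad_{g_0}u)>0$ wherever $\del u\ne\mathbf 0$. Thus we must verify: $u$ is proper, nonnegative, and Lorentzian; its critical set is a single point; there is no non-constant complete geodesic of $\m$ on which $u$ is constant; and $u$ satisfies conditions (1)--(4) of Theorem~\ref{thm:convex-function1} for $N$.

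Since $\sigma,f\ge 0$ we have $u\ge 0$, and $u$ is proper as a composition of proper maps. Because the Levi-Civita connection of $\m$ is flat, $\hess$ on $\m$ is the ordinary matrix of second partials, so the hypothesis that $f$ is strictly convex says exactly that this matrix is everywhere positive definite; together with properness and nonnegativity this forces $f$ to attain its minimum at a unique point $p_0$, where $\del f(p_0)=\mathbf 0$. As $\del u=(\sigma'\circ f)\,\del f$ with $\sigma'>0$, the critical set of $u$ is precisely $\{p_0\}$. Moreover $\langle\del u,\del u\rangle=(\sigma'\circ f)^{2}\langle\del f,\del f\rangle$: if $\langle\del f,\del f\rangle\ge 0$ this is $\ge 0>-1$, while if $\langle\del f,\del f\rangle<0$ then $0<(\sigma'\circ f)^{2}\le 1$ gives $\langle\del u,\del u\rangle\ge\langle\del f,\del f\rangle>-1$. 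Hence $u$ is Lorentzian, so the graph $M=\Gamma(u)$ is a timelike hypersurface of $\M$ --- which is the first assertion of the corollary.

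The remaining conditions reduce to one observation: $f$ is itself a convex function on $\m$ (its Hessian matrix is positive definite), and $f=\sigma^{-1}\circ u$. Indeed, $\sigma$ is a proper smooth strictly increasing map on $[0,\infty)$, hence a diffeomorphism onto $[\sigma(0),\infty)\supseteq\range u$, and $\rho:=\sigma^{-1}|_{\range u}$ is smooth with $\rho'=1/(\sigma'\circ\sigma^{-1})\ge 1$ (here the hypothesis $0<\sigma'\le 1$ enters) and $\rho\circ u=f$ convex. Thus the ``there exists $\rho$'' side of the equivalence in Theorem~\ref{thm:convex-function1} holds; and since, as noted in the proof of that theorem, the equivalence remains valid in the semi-Riemannian setting for any vector field with $Nu>0$, we conclude that $u$ satisfies conditions (1)--(4) for $N$. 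Finally, a non-constant complete geodesic of $\m$ is an affine line $\gamma(t)=q+t\mathbf v$, $\mathbf v\ne\mathbf 0$, and $(f\circ\gamma)''(t)=\hess f(\mathbf v,\mathbf v)>0$, so $f$ --- and hence $u$ --- is not constant on it.

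All hypotheses of Theorem~\ref{thm:warbled-geo-connected} are now in place: $\m$ is null-disprisoning, $u$ is proper and nonnegative with critical set the single point $p_0$, there is no non-constant complete geodesic on which $u$ is constant, $u$ satisfies (1)--(4) for $N$, and $\m$ is a strongly causal space-time on which $u$ is Lorentzian. The theorem therefore gives that the graph $\Gamma(u)=M$ is geodesically connected. The only step needing care is condition (4), whose direct verification would require controlling the defect function $\mu$; this is avoided altogether by the observation that $\sigma^{-1}\circ u=f$ is already convex, so that the equivalence in Theorem~\ref{thm:convex-function1} supplies (4) at once. (Alternatively one notes that $\mu$ depends only on the flat connection, hence coincides with the $\mu$ computed for $u=\sigma\circ f$ on the Euclidean space $\eu$, where it is finite by the Riemannian theory; but the reparametrization argument is shorter.)
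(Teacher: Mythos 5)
Your proposal is correct and follows essentially the same route as the paper: set $\rho=\sigma^{-1}$ (so $\rho'\ge 1$ and $\rho\circ u=f$ is strictly convex), invoke the ``if and only if'' of Theorem~\ref{thm:convex-function1} to obtain conditions (1)--(4) for a Riemannian gradient field $N$ without computing $\mu$, check the Lorentzian condition via $\langle\del u,\del u\rangle=(\sigma'\circ f)^2\langle\del f,\del f\rangle>-1$, and conclude by Theorem~\ref{thm:warbled-geo-connected}. Your treatment is slightly more explicit than the paper's on two routine points (uniqueness of the critical point of $u$ and non-constancy of $u$ along complete geodesics of $\m$), but the argument is the same.
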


\begin{proof}
$u$ is Lorentzian since 
$$ \langle \del u, \del u \rangle = \langle \del f, \del f \rangle (\sigma' \circ f)^2 > (-1) (\sigma' \circ f)^2 \geq -1.
$$

Let $ \rho = \sigma^{-1}$ and $ N = \del_{\, \text{Riem}} u $, the gradient of $u$ in any Riemannian metric $ g_{\text{\,Riem}} $ on $M$. Since $ f = \rho \circ u $ is strictly convex, there is no non-constant complete geodesic on which $u$ is constant. Since $\rho\,^\prime = 1/( \sigma\,^\prime \circ \sigma^{-1} ) \geq 1 $ and $ Nu > 0 $ on the non-critical 
set, then by Theorem \ref{thm:convex-function1}, $u$ satisfies $(1)$, $(2)$, $(3)$ and $(4)$ for $N$. 
By Theorem \ref{thm:warbled-geo-connected}, since $\m$ is null-disprisoning, $M$ is geodesically connected.
\end{proof}

\appendix
\section{Orthogonal splittings}
\label{app:conclusion}
Let us recall 
%S
the following theorem of Benci, Fortunato and  Masiello \cite[Theorem 1.1]{bfm}, which is
the comprehensive theorem on geodesic connectedness 
%S
in the orthogonal splitting case, as 
discussed in the survey by Candela and Sanchez \cite[Theorem 4.37]{cs:survey}:

\begin{definition}\label{def:OrthogonalSplitting}
A Lorentzian manifold $M$ is an \emph{orthogonal splitting space-time} if $M$ is isometric to $( M_0 \times \R, \langle \cdot, \cdot \rangle_L)$ where \begin{equation}
\langle \mathbf z, \mathbf z' \rangle_L = 
\langle A(z) \mathbf x, \mathbf x' \rangle_R - \beta(z) \mathbf t \mathbf t'
\end{equation} for any $ z = (x, \tau) \in M$ $(x \in M_0, \tau \in \R) $, and $ \mathbf z = (\mathbf x, \mathbf t), \mathbf z' = (\mathbf x', \mathbf t') \in T_z M \cong T_x M_0 \times \R$. Here $ (M_0, \langle \cdot, \cdot \rangle_R ) $ is a finite-dimensional, connected Riemannian manifold, $ A(z):T_x M_0\to T_x M_0 $ is a smooth, symmetric, strictly positive linear operator, and $ \beta : M \to \R $ is a smooth, strictly positive scalar field.
\end{definition}

\begin{thm} \cite{bfm}\label{thm:cs-main}
Let $M$ be an orthogonal splitting space-time, isometric to $( M_0 \times \R, \langle \cdot, \cdot \rangle_L)$, where $ (M_0, \langle \cdot, \cdot \rangle_R ) $ is a complete Riemannian manifold. Assume that there exist constants $a$, $b$, $c$, $d > 0 $ such that the coefficients $A$, $\beta$ in (\ref{def:OrthogonalSplitting}) satisfy the following hypotheses: \begin{gather}
a \langle \mathbf x, \mathbf x \rangle_R \leq \langle A(z) \mathbf x, \mathbf x \rangle_R, \label{hyp:Abound}\\
b \leq \beta(z) \leq c 
, \label{hyp:betabound} \\
| \beta_\tau(z)| \leq d, \quad| \langle A_\tau(z) \mathbf x, \mathbf x \rangle_R | \leq d \langle \mathbf x, \mathbf x \rangle_R, \label{hyp:atbtbound}
\end{gather} for all $ z = (x, \tau) \in M $, $ \mathbf x \in T_x M_0 $. Furthermore, assume that \begin{gather}
\limsup_{\tau \to +\infty} ( \sup \{ \langle A_\tau(z) \mathbf x, \mathbf x \rangle_R : x \in M_0, \, \mathbf x \in T_x M_0, \langle \mathbf x, \mathbf x \rangle_R = 1 \} ) \leq 0, \\
\liminf_{\tau \to -\infty} ( \inf \{ \langle A_\tau(z) \mathbf x, \mathbf x \rangle_R : x \in M_0, \, \mathbf x \in T_x M_0, \langle \mathbf x, \mathbf x \rangle_R = 1 \} ) \geq 0.
\end{gather} Then $M$ is geodesically connected.
\end{thm}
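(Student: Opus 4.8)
The proof (in the literature, \cite{bfm}) runs along the infinite-dimensional variational method of Benci--Fortunato--Giannoni, and that is the route I would take. Fix $z_0=(x_0,\tau_0)$ and $z_1=(x_1,\tau_1)$ in $M=M_0\times\R$, and work on the Hilbert manifold $\Omega$ of $H^1$ curves $z(\cdot)=(x(\cdot),\tau(\cdot)):[0,1]\to M$ with $z(0)=z_0$, $z(1)=z_1$; the critical points of the energy functional $J(z)=\int_0^1\langle\dot z,\dot z\rangle_L\,ds=\int_0^1\bigl(\langle A(z)\dot x,\dot x\rangle_R-\beta(z)\dot\tau^2\bigr)\,ds$ are exactly the affinely parametrized geodesics from $z_0$ to $z_1$. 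Because $J$ is strongly indefinite --- unbounded above and below on every neighborhood of a critical point --- one cannot minimize it directly; the whole strategy is to remove the indefiniteness by a fibered reduction in the $\tau$-variable.

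The first step is this reduction. For a fixed $x(\cdot)$, consider $\tau\mapsto J(x,\tau)$ over the affine space of $H^1$ functions with $\tau(0)=\tau_0$, $\tau(1)=\tau_1$. Since $\beta(z)\ge b>0$ by \eqref{hyp:betabound}, the term $-\int_0^1\beta(z)\dot\tau^2\,ds$ makes this map anticoercive, and the bounds \eqref{hyp:betabound} and \eqref{hyp:atbtbound} on $\beta$ and $\beta_\tau$ yield that it is strictly concave with a unique maximizer $\tau=\tau_x(\cdot)$ depending smoothly on $x$. One then sets $\mathcal{J}(x)=J(x,\tau_x)$ on the Hilbert manifold $\Omega_0$ of $H^1$ curves in $M_0$ from $x_0$ to $x_1$. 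A standard saddle-point lemma shows that $\mathcal{J}$ is $C^1$ and that $x_*$ is critical for $\mathcal{J}$ if and only if $(x_*,\tau_{x_*})$ is critical for $J$; so it suffices to produce a critical point of $\mathcal{J}$.

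Next I would establish that $\mathcal{J}$ is bounded below, coercive, and satisfies the Palais--Smale condition. Lower boundedness and coercivity come almost for free from the maximizing property of $\tau_x$: comparing with the affine function $\ell(s)=(1-s)\tau_0+s\tau_1$ gives $\mathcal{J}(x)=J(x,\tau_x)\ge J(x,\ell)\ge a\|\dot x\|_{L^2}^2-c(\tau_1-\tau_0)^2$ by \eqref{hyp:Abound} and \eqref{hyp:betabound}, which is coercive on $\Omega_0$. The Palais--Smale condition is the main obstacle. Coercivity bounds a Palais--Smale sequence $x_j$ in $H^1$, so after passing to a subsequence $x_j$ converges weakly in $H^1$ and uniformly; completeness of $(M_0,\langle\cdot,\cdot\rangle_R)$ guarantees that the limit is a genuine curve in $M_0$, and one must then upgrade to strong $H^1$ convergence. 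The real danger is that the associated maximizers $\tau_{x_j}$ drift off to $+\infty$ or $-\infty$, and it is precisely here that the asymptotic hypotheses $\limsup_{\tau\to+\infty}(\sup\langle A_\tau(z)\mathbf x,\mathbf x\rangle_R)\le 0$ and $\liminf_{\tau\to-\infty}(\inf\langle A_\tau(z)\mathbf x,\mathbf x\rangle_R)\ge 0$ are used: they force the $\tau$-components of a Palais--Smale sequence to stay in a bounded slab of $M_0\times\R$, which closes the compactness argument.

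Finally, a $C^1$ functional that is bounded below, coercive, and satisfies Palais--Smale attains its infimum, giving a critical point $x_*$ of $\mathcal{J}$; lifting by $\tau_*=\tau_{x_*}$ yields a geodesic of $M$ from $z_0$ to $z_1$, and since $z_0,z_1$ were arbitrary, $M$ is geodesically connected. The two places where I expect genuine work, as opposed to routine bookkeeping, are the strict concavity and smooth dependence of the fiber maximizer $\tau_x$ when $\beta$ truly depends on $\tau$ (where the bound on $\beta_\tau$ enters quantitatively), and the confinement of the $\tau$-components of Palais--Smale sequences via the one-sided asymptotic control of $A_\tau$.
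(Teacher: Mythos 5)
This statement is not proved in the paper at all: it is Theorem 1.1 of \cite{bfm}, quoted verbatim in the appendix purely as a point of comparison with the results proved in the body of the paper, so there is no ``paper's own proof'' to match your argument against. Your sketch does reproduce the broad architecture of the actual proof in the literature --- the variational setting on $H^1$ curves, the removal of the strong indefiniteness by eliminating the $\tau$-variable, coercivity of the reduced functional from \eqref{hyp:Abound} and \eqref{hyp:betabound}, and the use of the asymptotic conditions on $A_\tau$ to keep the time components of Palais--Smale sequences in a bounded slab.

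One step is asserted too quickly, however. You claim that \eqref{hyp:betabound} and \eqref{hyp:atbtbound} make $\tau\mapsto J(x,\tau)$ strictly concave with a unique smooth maximizer $\tau_x$. Those are bounds on $\beta$, $\beta_\tau$ and $A_\tau$ only; the second variation of $J$ in the $\tau$-direction involves $\beta_{\tau\tau}$ and $A_{\tau\tau}$, which the hypotheses do not control, so concavity does not follow in the genuinely non-static case. (The clean concave reduction is available when $A$ and $\beta$ are independent of $\tau$, i.e.\ in the static/stationary setting.) The proof in \cite{bfm} gets around this with a more laborious scheme --- approximation arguments and a priori estimates on the critical points of the approximating functionals, rather than a global fiberwise maximization --- and this is where most of the technical work of that paper lives. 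Since the theorem is imported here by citation, this does not affect the paper, but as a standalone proof your reduction step has a genuine gap at exactly that point.
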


%S
\begin{remark}
Under an assumption of exhaustion of $\,M\,$ by convex strips, Masiello weakens the assumptions of Theorem \ref{thm:cs-main} on the coefficients of the orthogonal splitting that guarantee geodesic connectedness \cite{masiello:convex}.
%S ???
It  is not apparent
that  general timelike convex hypersurfaces carry  such exhaustions.
\end{remark}

\setcounter{figure}{0} 
In this section, we examine the conditions of Theorem \ref{thm:cs-main} in 
two natural examples of splittings of a strictly convex hypersurface $M$. We take $M$ to be the graph in $\Mink$ given by 
$$x^2 =f(x^1,t)= \sqrt{(x^1)^2 + t^2 + 1}.$$ 
By Theorem \ref{thm:main}, $M$ is geodesically connected. We show that the splittings do not satisfy the conditions of Theorem \ref{thm:cs-main}, nor do we know of any other splittings that do so.

\begin{figure}[h!]
\centering
\includegraphics[width=0.6\textwidth]{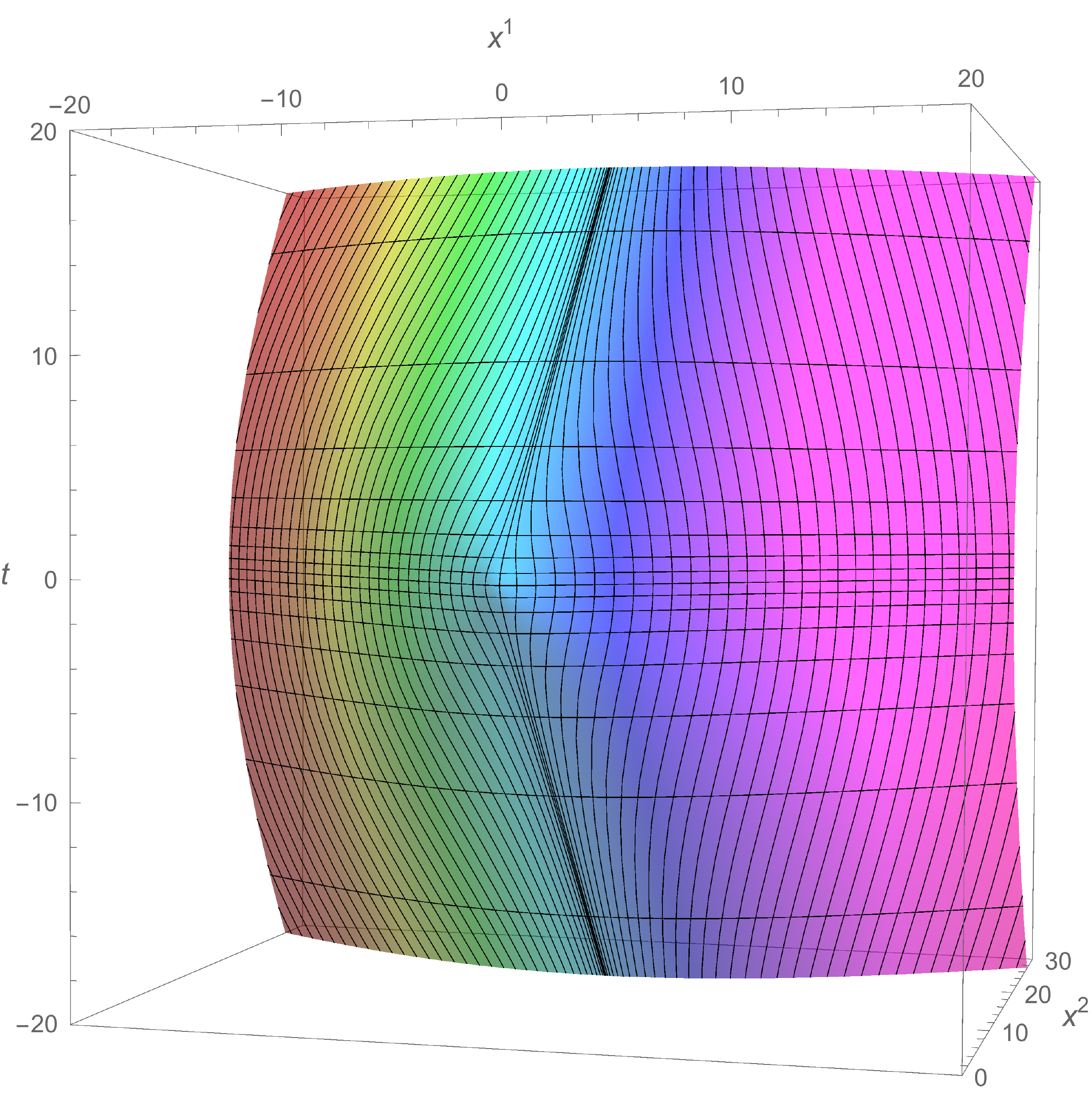}
\caption{
The timelike convex hypersurface $M$ 
with orthogonal splitting coordinates generated by taking $\tau((x^1,f(x^1,t),t)) = \sinh^{-1}(t)$ as time function.}
\label{fig:orth1}
\end{figure}

Figure \ref{fig:orth1} illustrates the first example. Intersecting $M$ with hyperplanes $t=$ constant gives a family of Cauchy hypersurfaces which we parametrize with a time function $\tau$ as follows:
$$M_\tau = M \cap \{ (x^1,x^2,t) \in \Mink : t = \sinh(\tau) \}.$$

The orthogonal trajectories $ \gamma_x(t) $ are illustrated, obtained by solving a family of ordinary differential equations numerically. In order for the coordinates $$(x, \tau) \mapsto (x^1(x,\tau), f(x^1(x,\tau), \sinh(\tau)), \sinh(\tau)) $$ to form an orthogonal splitting, $x^1$ must satisfiy the differential equation $$ x^1_\tau(x, \tau) = - \frac{x^1(x,\tau) \sinh(\tau)\cosh(\tau) }{\cosh^2(\tau) + 2 (x^1(x,\tau))^2 }.$$ 
Then
 $$\beta(x, \tau) = \frac{(1+ 2(x^1(x,\tau))^2) \cosh^2(\tau)}{2(x^1(x,\tau))^2 + \cosh^2(\tau)}.$$
Since $\beta$ is unbounded along the curve $x^1 = \cosh(\tau)$, hypothesis (\ref{hyp:betabound}) of Theorem \ref{thm:cs-main} is violated. 
Additionally, one can show that $A(x, \tau) \to 0$ as $\tau \to \pm \infty$ along the meridian $x^1 = 0$, so the condition $a >0$ on $A$ in hypothesis (\ref{hyp:Abound}) is violated. 
The splitting can be modified by reparameterizing the time function $\tau$ but this does not effect the boundedness of $A$.

\begin{figure}[h!]
\centering
\includegraphics[width=0.6\textwidth]{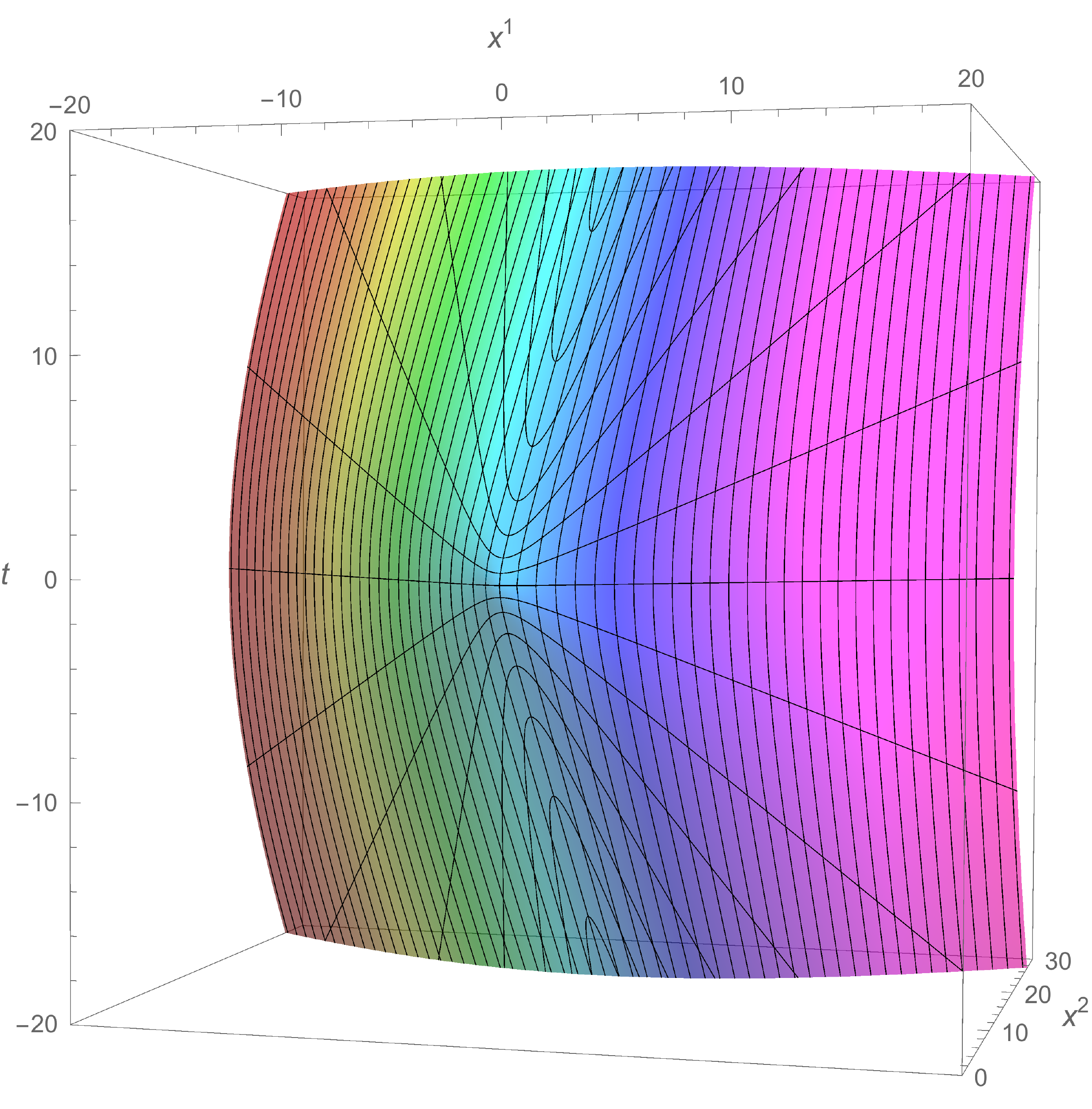}
\caption{
The timelike convex hypersurface $M$ 
with orthogonal splitting coordinates given by boost coordinates.}
\label{fig:orth2}
\end{figure}

Another natural splitting, illustrated in Figure \ref{fig:orth2},
is obtained by boosting the $t = 0$ slice about the $x^1$-axis, namely 
$$ (x, \tau) \mapsto (x, \cosh(\tau)\sqrt{1 + x^2}, \sinh(\tau)\sqrt{1 + x^2} ).$$ We obtain $ A(x, \tau)\mathbf x = ((1 + 2x^2)/(1 + x^2)) \mathbf x$ and $\beta(x, \tau) = 1 + x^2$. Thus $\beta$ is unbounded above as $x \to \pm \infty$.
This splitting can be modified by reparameterizing the time function $\tau$ but $\beta$ will remain unbounded as $x \to \pm \infty$.
With this particular splitting
 the metric is static, i.e. $A$ and $\beta$ are independent of $\tau$, and other methods discussed in \cite{cs:survey} can be applied to show geodesic connectedness.

However, there may well exist non-static hypersurfaces satisfying the conditions of our main theorem, on which an orthogonal splitting metric cannot 
simultaneously 
satisfy the required bounds on $A$ and $\beta$. In general it is unclear how to know if a given timelike hypersurface is 
static or 
has orthogonal splittings satisfying the conditions of Theorem \ref{thm:cs-main}.

%S 
\section*{Acknowledgments} We thank Gregory Galloway and Antonio Masiello for mentioning to us some previous works on convexity.

\newpage

%%%%%%%%%%%%%%%%%%%%%%%%%%%%%%%%%%%%%%%%%%%%%%%%%%%%%%%%%%%%%%%%%%%%%%%%%%%%%%%%%%%%%%%%%%%%%%

\end{document}